\theoremstyle{plain}
\newtheorem{theorem}{Theorem}
\newtheorem{corollary}[theorem]{Corollary}
\newtheorem{lemma}[theorem]{Lemma}
\newtheorem{axiom}{Axiom}
\newtheorem{definition}{Definition}
\author{Wim Veldman}
\address{Institute for Mathematics, Astrophysics and Particle Physics, Faculty of Science, Radboud University Nijmegen,
Postbus 9010, 6500 GL Nijmegen, the Netherlands}
\email{W.Veldman@science.ru.nl}
\begin{document}
\title{Intuitionism: an inspiration?}
\maketitle

\begin{center}
\selectlanguage{polutonikogreek}
toig`ar >eg'w toi, xe~ine, m'al> >atrek'ews >agore'usw

\selectlanguage{english}
\smallskip
Well then, let me you, stranger,   precisely explain these matters\\\hfill \textit{Od.} $\alpha$ 214 \end{center} 

\section{Introduction}\label{S:intr} \subsection{Topology and foundations}We want to introduce the reader to the \textit{intuitionistic} view of  mathematics proposed, developed and defended by the Dutch mathematician L.E.J. Brouwer (1881-1966)\footnote{The title of the paper is taken from an exclamation by the philosopher L.~Wittgenstein, see \cite{veldman20}.}. 

Brouwer obtained his fame as a mathematician by a number of very important results in topology such as the \textit{Dimension Theorem}: \begin{quote} For all positive integers $m,n$,\\ if there is a homeomorphism from $[0,1]^m$ to $[0,1]^n$, then $m=n$, \end{quote} and the closely related \textit{Brouwer Fixed-point Theorem}: \begin{quote} For each positive integer $n$, for every continuous function $f$ from $[0,1]^n$ to $[0,1]^n$, there exists $p$ in $[0,1]^n$ such that $f(p)=p$. \end{quote}  

These results were obtained in 1911. 

 Brouwer had started thinking on the foundations of mathematics earlier than that. In his dissertation from 1907, see \cite{brouwer07}, he wrestled already with the concept of the continuum. In a famous paper from 1908, see \cite{brouwer08}, he attacked the principle of the excluded third $X\;\vee\;\neg X$. 

\subsection{Against the formalists}In his inaugural lecture at the University of Amsterdam in 1912, entitled: `{\it Intuitionism and Formalism}', see \cite{brouwer12}, Brouwer declares himself an opponent of  the {\it formalists}. The formalists, in Brouwer's words, explain the exactness and precision of  the statements of mathematics    by describing mathematics as a game with meaningless strings of symbols  according to very strict and precise rules. The game may turn out to be useful for certain purposes, and then may give `{\it a vague sensation of delight}', but these other purposes are not the concern of the mathematician. In the formalist's `game', there is no place for \textit{truth}, only for \textit{correctness}.  
 
 Brouwer most emphatically did \textit{not} want to explain away the meaningfulness of mathematical statements and the experience of truth. In his {\it intuitionistic} view,  the source of mathematical exactness is not to be found `{\it on paper}' but `{\it in the human intellect}'. 
 
\subsection{I. Kant} Brouwer was inspired by the philosopher I. Kant (1724-1804) who described the simple theorem `7+5=12' as the result of a construction in `\textit{pure intuition}'. 
 
 Pure intuition has to be distinguished from observation by the senses. In Kant's terms, it is the `form' of  all observations by the senses, and, as such, independent of all individual observations by the senses, i.e. {\it a priori}.  
 
 It is  very important that {\it one has to do something} in order to come to the insight of the truth of `7+5=12': counting 1, 2, 3, $\dots, 7$ and then, continuing, putting 1 below 8, 2 below 9, 3 below 10, 4 below 11, and, finally, 5 below 12. This is why Kant holds the mathematical judgment `7+5=12'  is not {\it analytic}, i.e. a matter of logic, but {\it synthetic}, see \cite[\S 2c]{kant}, i.e., going beyond logic, and, one might perhaps say,  a product of some activity by the judging subject.  
 
 \subsection{Languageless constructions} For the intuitionistic mathematician, every mathematical theorem  is, like `7+5=12', the result of  a {\it construction in pure intuition}. The successful completion of this construction, {\it the proof of the theorem}, gives joy and a sense of beauty, and one wants to tell one's friends.
 
 A construction in pure intuition is \textit{languageless}. Language comes in if I want to preserve the memory of the construction for myself, or if I want to make an attempt to explain to my fellow mathematician what I did, in the hope she will be able to do something similar. Language is not trustworty: it always may fail to do what one expects it to do, as we all know from our experience as unsuccessful teachers and  forgetful researchers. 
 
 There is no such thing as an exact linguistic description of a mathematical construction. The linguistic description only is the \textit{accompaniment} of an act of understanding, either between me and one of my former selves, or between me and a fellow mathematician.
 
 Nevertheless, it is our task to study the language of mathematics carefully and to find out where it perhaps may be improved. Although it is impossible we express ourselves in a perfect way, we always have to try to do better than we did until now.
 
 \subsection{Intuitionistic mathematics is constructive mathematics}Intuitionistic mathematics is \textit{not} a new, alternative kind of mathematics. Mathematics \textit{is} intuitionistic mathematics. Brouwer's \textit{revolution} is a \textit{call for reflection}. A sharpened awareness of what it is we do when doing  
  mathematics makes us more careful and more precise and, thereby, hopefully, better mathematicians.  
  
  Intuitionistic mathematics is  {\it constructive} mathematics. The reason is that mathematics itself essentially is constructive although this fact has been obscured by our mistaken trust in (classical) logic. 
  
  Thinking of Brouwer's ability in topology one may reflect that a continuous function $f$ from $[0,1]^m$ to $[0,1]^n$ has the property that, for each  $p$ in $[0,1]^m$, one may effectively find approximations of the function value $f(p)$ from approximations of the argument $p$.  \textit{Continuous} functions are \textit{constructive} functions:  there is a deep connection between Brouwer's  foundational interests and his topological concerns.
  
    \bigskip
\subsection{The contents of the paper}
The paper is divided into 21 Sections. Section 2 treats the \textit{basic intuition} giving rise to the nonnegative integers. Section 3 is a short reflection on theorems and their proofs.   In Sections 4-6, we consider three famous results, that usually are taken to be negative statements although they are not negative at all. In Section 4, we have a look at Euclid's theorem that there are infinitely many primes. In Section 5, we consider the theorem that $\sqrt 2$ is irrational. In Section 6, we introduce the reals and Cantor's theorem that there are uncountably many of them.  In Section 7, we introduce  \textit{reals oscillating around $0$}: if $x$ is such a number, one is unable to make out if $x<0$, $x=0$ or $x>0$.  Section 8 explains the reasons for interpreting the logical constants constructively and  rejecting the principle of the excluded third $X\vee \neg X$.  In Section 9 we study a basic result of real analysis: the {\it Intermediate Value Theorem}. The theorem is an existential statement that fails when taken at its constructive face value, but, if suitably reformulated and adapted, leads to several true and useful results. Finding such reconstructions of a  constructively invalid result is part of the \textit{intuitionistic program}.

 Intuitionistic mathematicians hold that every real function is (pointwise) continuous. 
 In Section 10, we treat the preliminary observation that a real function can not have a positive discontinuity. In Section 11, we go into the intuitionistic meaning of {\it negation}.  In Section 12, we introduce the  {\it Continuity Principle}, an \textit{axiom} used by Brouwer and we show that this axiom leads to the positive result that real functions are (pointwise) continuous. 

In Section 13, we introduce the {\it Fan Theorem}, a theorem that perhaps, like the Continuity Principle, should be called an  axiom:  we discover Brouwer's  `proof': most of us would call it a philosophical argument. One should note, however,  that, as long as we do not organize our mathematical results in a formal way, it is difficult to distinguish axioms from theorems. As we shall see, the Fan Theorem implies that a real function from $[0,1]$ to $\mathcal{R}$ is not only pointwise but even  \textit{uniformly} continuous. As an aside, we prove that the Fan Theorem fails to be  true in \textit{computable analysis}. In Section 14, we go into the r\^ole of the Fan Theorem in game theory. 

In Section 15, we consider {\it Brouwer's Thesis on Bars in  $\mathcal{N}=\omega^\omega$}, formulated as a {\it Principle of Bar Induction}. The Thesis on Bars in $\mathcal{N}$ implies the Fan Theorem but is a far stronger statement. We show how the argument for the Fan Theorem extends to the Thesis on Bars in $\mathcal{N}$.  As an application we prove, in Section 16,  the {\it Principle of Open Induction in $[0,1]$}. In Section 17 we give a second formulation of Brouwer's Thesis. This second formulation uses \textit{stumps}: inductively generated sets of finite sequences of natural numbers, whose r\^ole may be compared to the r\^ole of countable ordinals in classical, non-intuitionistic analysis. As an application we give, in Section 18,  an intuitionistic formulation and proof of the {\it Clopen Ramsey Theorem}, itself an impressive extension of Ramsey's result from 1928.  The proof of this intuitionistic theorem has not been published before. In Section 19 we study Borel sets. Because of the failure of De Morgan's Law $\neg\forall n \neg P(n)]\rightarrow \exists n[ P(n)]$, the subject of descriptive set theory needs a complete reconstruction. In Section 20 we reflect on the notion of a finite subset of the set $\mathbb{N}$ of the non-negative integers. This mathematical notion may be made intuitionistically precise in uncountably many ways. It is a perfect illustration of the subtlety and expressivity the language of mathematics has obtained from Brouwer's intervention.   

In  Section 21, we briefly describe how E. Bishop, who founded his own school of {\it constructive analysis} in the 1960's, treated Brouwer's legacy, and how also  P. Martin-L\"of's view on constructive mathematics was influenced by Brouwer.  

Many Sections may be read independently from other Sections.  
\section{The basic intuition}\label{S:intuition} \subsection{If you don't become like little children $\ldots$}\footnote{Matthew 18:3}$\:$

Mathematics is a child's game and
each of us, from his tender days, is familiar with the {\it infinite} sequence: $$0,\;1, \;2, \;3,\dots$$

The sequence of these \textit{natural} numbers or non-negative integers {\it has no end}.

No end? Can we always go on? Yes, yes, we   can always go on!

We never die, or get tired of the game. Please, do not plague us   with such silly objections.

We courageously stick to the fascinating perspective of the never finished infinite. 

\subsection{Induction and recursion}
The function $\mathsf{S}$  produces, given any non-negative integer $n$, the next one, $\mathsf{S}(n)$: $$\mathsf{S}(0)=1, \;\mathsf{S}(1) =2, \;\mathsf{S}(2)=3, \ldots$$
Let $P$ be a property of non-negative  integers such that one has a proof of $P(0)$, and, for each $n$, a proof of $P\bigl(\mathsf{S}(n)\bigr)$ from $P(n)$, i.e.    a proof of $P(1)$ from $P(0)$, a proof of $P(2)$ from $P(1)$, a proof of $P(3)$ from $P(2)$, and so on.  One then proves, step by step, first $P(1)$, then $P(2)$, then $P(3)$, and so on,  that is, one  proves, for each $n$, $P(n)$. We see this happening although the process is never finished.

The principle of \textit{complete induction on the non-negative integers} thus is obvious.

Suppose one defines: for each $ m$, $m+0=m$ and for each $n$, $m+\mathsf{S}(n)=\mathsf{S}(m+n)$. One  then      calculates, first   $m+1= m+\mathsf{S}(0)$, then  $m+2= m+\mathsf{S}(1)$, then  \\$m+3=m+\mathsf{S}(2)$, and so on, and, in this way,   one finds,  for each $n$, the meaning  of $m+n$.

This also is obvious. 

\subsection{The `logicist' attempt to `prove' induction}

In the nineteenth century,  G. Frege and R. Dedekind tried to  \textit{define} the non-negative integers, logically or  set-theoretically, and then to \textit{prove} the principle of complete induction and  the possibility of defining functions by recursion. 

 Brouwer considered such attempts futile and misleading. 
 
 \subsection{Awakening consciousness} Brouwer spent a lot of thought on finding the \textit{origin} of the idea of the non-negative integers, see for instance \cite{brouwer48}.

\smallskip
\textit{A first thing, then a next thing, then another thing, and again another thing  $\ldots$}

\smallskip

Mark the beginning: \textit{a first thing, then a next  thing}.  Brouwer called it the \textit{basic intuition of two-ity}. He saw it happening in an awakening of slumbering consciousness, a shift of attention, a moment's reflection, or a {\it move of time}: here I am, and now I see: `here I am', and that is two things.  Iterating this shift of attention, I obtain three things, four things, and so on. 

One is reminded of R. Dedekind's proof of  \cite[Theorem 66]{dedekind}: \textit{There exist infinite sets}, for instance, the `\textit{world of my thoughts, meine Gedankenwelt}'. Dedekind invokes a function $s\mapsto s'$ that associates to every thought $s$ of mine the thought $s'$: \textit{`I am thinking of $s$}' and concludes, from the fact that this function should be one-to-one and non-surjective, the latter because `\textit{I myself, mein eigenes Ich}' am a thought of mine not of the form $s'$, that I have infinitely many thoughts. 

\smallskip As we mentioned already, Brouwer refers to Kant. Kant held that the concepts of `time' and `space', the `forms' of `{\it inner sense}' and `{\it outer sense}', respectively,  are \textit{apriori} concepts, that is, independent from every observation by the senses. These apriori concepts are  studied by mathematics, in its two divisions: `arithmetic' and `geometry'. Brouwer wants to maintain Kant's conviction about the apriori character of `time' but rejects the apriori character of `space'. This is partly because Kant believed space to be Euclidean, and, after Kant, one had the non-euclidean revolution. Also, geometry may be `arithmetized' and be robbed of its status as an independent discipline. 

\subsection{The `set' $\mathbb{N}$} We use $\mathbb{N}$ to denote the {\it set}
 of the nonnegative integers. This \textit{`set'} is a well-understood totality but an ongoing project, in no sense finished or complete. We use $m,n,  \ldots, s, t, \ldots$ as variables over $\mathbb{N}$.

\section{Theorems and their proofs}  \subsection{Mere announcements} One does  not understand a  mathematical theorem  immediately upon hearing its statement.  The statement of the theorem is only a preliminary announcement, a partial communication,  a  promise. Its full meaning  will unfold itself  in the {\it proof}, and understanding this full meaning   is reserved for someone who takes the trouble to go through the proof and to reconstruct it for herself.  

\subsection{No to `Platonism'}\label{SS:notoplato} The statement of a theorem  tells us that   some  mental construction has been succesfully completed. It is not reporting a fact one might `observe'  in a mathematical reality lying outside us. \\The latter view has been strongly defended by Plato (427-347 B.C.). He  believed that the contemplation  of the timeless mathematical reality, that one approaches with the mind and not with the senses and that one should not touch with one's fingers\footnote{See Plato, {\it Politeia (the Republic) VII} 527a.},   would prepare us for the contemplation of what is the true interest of the philosopher: the timeless reality of the `{\it Ideas}'.  

 The intuitionistic mathematician rejects this view. In the ancient debate between Speusippus (405-338 B.C.) and Menaechmus (380-320 B.C.) as reported by Proclus (410-485), she would definitely have sided with Menaechmus. Speusippus held that every `problem' essentially is a `theorem', i.e. a truth to behold, whereas Menaechmus defended the view that every `theorem'  essentially is a `problem', i.e. a construction to find and carry out.\footnote{The 48 propositions of the First Book of Euclid's {\it Elements} (300 B.C.) are divided into 34 `theorems' and 14 `problems'.}  Proclus, in his wisdom,  judged that  both views contain some truth\footnote{`Both parties are right', see \cite[Prologue, Part Two, $\S$ 78]{proclus}}, and, indeed, one might say, both views try to explain a part of our mathematical experience. \\ In our days, many mathematicians  still feel they are discovering \textit{hard facts} not of their own making. They  prefer a so-called {\it realistic} view, reminiscent of Plato's, while admitting it is difficult to give a convincing account of the {\it mathematical reality} see \cite[p. 123]{hardy}.

As to Brouwer's {\it formalist} adversaries, some of them develop their formalisms with an eye to `reality' while others,  like Brouwer himself, keep far from it.  A famous defender of the realistic point of view is K. G\"odel (1906-1972). The formula $\mathsf{CH}$ in the language of set theory that is taken to represent Cantor's famous  continuum hypothesis\footnote{In an intuitionistic context, the classical continuum hypothesis has no immediate meaning. One may however study meaningful statements that seem to come close tot it, see \cite{gielends} and \cite[Sections 7 and 8]{veldman20b}.} has been shown, by G\"odel himself in 1938, and by P. Cohen in 1963, respectively,  not to be refuted  and not to be proven by the rules of first-order predicate logic from the formulas accepted as representing `the' axioms of set theory, provided the latter do not give a contradiction. G\"odel observed, even before Cohen proved his result,  that  this fact does not solve the problem: we still have to find out if $\mathsf{CH}$  holds in the mathematical universe, see \cite{goedel47}\footnote{“Of course it is happening inside your head, Harry, but why on earth should that mean that it is not real?” Dumbledore in: 
 J.K. Rowling, Harry Potter and the Deathly Hallows }.

The intuitionistic mathematician  discards the realistic view:  he believes it is  wrong and  harmful for the practice of mathematics. 
\subsection{Propositions (as yet) without proof} From our intuitionistic perspective, it  seems impossible to fully understand a proposition that has (as yet) no proof. We like to say, however,  that we also `\textit{understand}' the proposition `$0=1$', in the sense that we see this proposition never will have a proof. 

Besides, propositions that are only partially understood in the sense that we have as yet no proof and also do not see that we never will have one,   have an important part to play in mathematics.  Let us consider an example.  

Studying Euclid's proof that there are infinitely many primes\footnote{See   Section \ref{S:infprime}.}, one comes to  {\it understand} the statement: `\textit{there are infinitely many primes}'  and to enjoy its beauty and depth. One now may ask: `{\it Are there also infinitely many twin primes, i.e. prime numbers $p$ such that also $p+2$ is prime?}'. {\it By analogy}, we  have \textit{some} understanding of the latter proposition. We  can {\it imagine what a proof of this proposition might be} and, in this sense, catch its meaning, although, up to now, no proof or refutation has been found. 

By his {\it exemplary} proof, Euclid suggests us how to (partially) understand other propositions of the form `{\it there are  infinitely many numbers $n$ such that $P(n)$}'.

\subsection{Negative theorems} There  is a large supply of theorems whose statement  is  negative. Such theorems  report us that a construction of a kind we hoped for is impossible. We obtain such conclusions {\it imagining} ourselves  possessed of  a construction of the required kind and  seeing  that we then could prove $0=1$. We hastily retreat, admitting we were  imagining the impossible.

Indeed, when constructing proofs, we often use our {\it imagination}. We  make temporary extra assumptions {\it for the sake of the argument}, that later perhaps will have to be given up.  G. Gentzen (1909-1945), the inventor of `\textit{Natural Deduction}', put great emphasis on this fact.  Gentzen was inspired by Brouwer, see \cite[Section II, $\S 3$]{gentzen}. 

In a sense, propositions with a negative conclusion are \textit{second-rank}. They tell us about a lost hope, a failure. Fortunately, many mathematical theorems are {\it unnecessarily negative}. Their negative formulation is caused by  laziness or lack of attention. The intuitionistic mathematician likes to find the positive content of seemingly negative statements.

In Sections \ref{S:infprime}, \ref{S:irrat} and \ref{S:uncount}, trying to make this clear, we give  some examples.

\section{Infinitely many prime numbers}\label{S:infprime}

The following theorem is due to Euclid, see  \cite[Book IX, Proposition 20]{euclid}. It is important and rewarding to study the proof in the original. 

\begin{theorem}\label{T:euclid} The prime numbers are more than every quantity of prime numbers  given beforehand.

\end{theorem}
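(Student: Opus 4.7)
The plan is to read Euclid's statement in its positive, constructive form: \emph{given} any finite list $p_1, p_2, \ldots, p_n$ of prime numbers, one produces an explicit prime number distinct from all of them. The argument is Euclid's own, and it fits nicely into the intuitionistic program sketched in the previous section, as no appeal to the excluded middle or to a completed totality of primes is needed.

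First I would consider the number $N = p_1 p_2 \cdots p_n + 1$, which is at least $2$. Next, I would determine the least integer $q > 1$ dividing $N$; a bounded search among $2, 3, \ldots, N$ suffices, and a short argument shows any such least divisor must be prime (any proper divisor of $q$ strictly between $1$ and $q$ would be a smaller divisor of $N$ greater than $1$, contradicting minimality). Finally, I would verify that $q$ differs from each $p_i$: if $q = p_i$ for some $i$, then $q$ divides both $N$ and $p_1 p_2 \cdots p_n$, hence their difference $1$, which is impossible since $q \geq 2$.

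The only subtle point is interpretational rather than technical: one must not read the theorem as asserting the existence of the totality of all primes as a completed infinite collection. Euclid's own phrasing, ``more than every quantity of prime numbers given beforehand'', already has exactly the constructive content one wants, namely that any finite enumeration of primes can effectively be extended. The proof above is thus an algorithm transforming the input list into a strictly longer list of primes; iterating it, starting from the empty list, produces a never-ending stream $2, 3, 5, 7, \ldots$ of prime numbers, in the spirit of the basic intuition recalled in Section \ref{S:intuition}. The main ``obstacle'' is really only the temptation to paraphrase the result negatively as ``there is no largest prime''; the positive reading is both truer to Euclid and truer to the intuitionistic standpoint.
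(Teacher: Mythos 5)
Your proof is essentially the paper's own: a constructive, algorithmic rendering of Euclid's argument, with the same philosophical framing (reading the theorem positively rather than as a \emph{reductio ad absurdum}, which the paper explicitly contrasts with Hardy's account). The only cosmetic differences are that you use the product $p_1\cdots p_n+1$ where Euclid (and the paper's paraphrase) uses the least common multiple plus one, and that you absorb Euclid's case split (``$EF$ is prime or $EF$ is not prime'') into the single step of taking the least divisor $q>1$ of $N$ — both inessential.
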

\begin{proof} Let the prime numbers  given beforehand be $A, B, C$. I say that there are more prime numbers than $A, B, C$. Let the least number measured by\footnote{the least common multiple of} $A,B, C$ be taken, and let it be $DE$, and let the unit $DF$ be added to it; we thus obtain $EF$.  $EF$ is prime or $EF$ is not prime\footnote{Euclid makes this case distinction as he did not consider a number to be a divisor of itself.}. First, let it be prime. Then the prime numbers $A, B, C, EF$ are more than $A, B, C$. Next, let $EF$ be not prime. Then there is a prime number that measures $EF$. Let it be $G$. I say $G$ is not the same with any of $A, B, C$. For, if possible, let it be so. Now $A, B, C$ measure $DE$ and thus also $G$ measures $DE$. But the number $G$ also measures $EF$ and thus the \emph{number} $G$   measures the  remainder $DF$ which is a \emph{unit}, and that is absurd. So $G$ is not the same with any of $A, B, C$. Thus prime numbers $A,B,C,G$ are found which are more than than the quantity of prime numbers $A,B,C$ given beforehand. \end{proof}

We now paraphrase Euclid's proof in contemporary terms. Euclid  explains us, how, given any finite list $q_0, q_1, \ldots, q_{n-1}$ of prime numbers, one  produces a prime number $q$ that is not on the list: 
consider $c:=$ the least common multiple of $q_0,q_1, \ldots, q_{n-1}$. If $c$ is prime,  take $q := c$. If $c$ is not prime,  determine the least  number $q>1$ that divides $c$. Then $q$  is prime, and, for every $i<n$, $q \neq q_i$.

One may seriously wonder if this paraphrase is better than the original. 

Note that Euclid  only treats the case: $n=3$. He of course expects the reader to pick up the general idea from this exemplary case. {\it The reader must be willing to understand him}. This is a  {\it conditio sine qua non} for every written proof.

 Also note that Euclid provides  an  \textit{algorithm}: he demonstrates how, given any finite list of prime numbers, one \textit{calculates} a prime number that still is missing.

Most importantly, as we observed already in the previous Section, Euclid  in a way lays down the meaning of the word \textit{`infinite'}. His  proof suggests   how to \textit{understand} a proposition of the form: `\textit{there are infinitely  many numbers satisfying  $A$}'.  One may prove such a thing by doing something like he does in the case of the prime numbers.

\medskip
Sometimes, a  description of Euclid's proof is given that does no justice to the argument.   According to this description, one is  invited  to \textit{assume}: there is a finite list containing all the primes.  Euclid   then \textit{reduces} this assumption   \textit{to absurdity} and  forces one to admit that the initial assumption is false: there is \textit{no} such finite list, \textit{which is to say:} there are \textit{not} finitely many primes. 

G.H.~Hardy, a great mathematician and a convinced platonist,  has this view, see \cite[Section 12]{hardy}. In the context of Euclid's theorem, he calls \textit{reductio ad absurdum} `\textit{one of a mathematician's finest weapons}', but adds a footnote stating: \textit{`The proof can be arranged so as to avoid a \emph{reductio} and logicians of some schools would prefer that it should be.'}

\section{$\sqrt 2$ is irrational}\label{S:irrat} 
 The theorem that $\sqrt{2}$ is irrational is also  due to the Greeks and dates from 450 B.C.. In those days, mathematicians sometimes were called \textit{`those who know about the odd and the even'}. The theorem uses the following
 
 \begin{lemma}\label{L:odd} For all positive integers $n$, if $n$ is odd, then $n^2$ is odd. \end{lemma}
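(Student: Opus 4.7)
The plan is to argue directly from the definition of oddness: a positive integer $n$ is odd precisely when it can be written as $n = 2k+1$ for some non-negative integer $k$. Given such a representation, I would compute $n^2$ by expanding $(2k+1)^2$ and exhibit it in the form $2\ell + 1$ for an explicit $\ell$, which is exactly what it means for $n^2$ to be odd. Since the argument is a straightforward algebraic identity, no case distinction or auxiliary hypothesis is required.

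More concretely, first I would fix a positive integer $n$ and assume $n$ is odd, producing the witness $k \geq 0$ with $n = 2k+1$. Second, I would expand $n^2 = (2k+1)^2 = 4k^2 + 4k + 1$ and regroup this as $2(2k^2 + 2k) + 1$. Third, setting $\ell := 2k^2 + 2k$, I would conclude that $n^2 = 2\ell + 1$, so $n^2$ is odd. This fits the constructive spirit stressed earlier in the paper: from a witness $k$ for the oddness of $n$, I produce an explicit witness $\ell$ for the oddness of $n^2$.

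There is no real obstacle here; the only point worth being careful about is making sure the representation $n = 2k+1$ of odd numbers is available, which itself rests on the recursive definition of addition introduced in Section \ref{S:intuition} and a small induction on $n$ showing that every positive integer is either of the form $2k$ or of the form $2k+1$. If the reader prefers, the lemma can equally well be proved by induction on $n$ restricted to odd values, stepping from $n$ to $n+2$ and using $(n+2)^2 = n^2 + 4n + 4$, but the direct computation above is shorter and more transparent.
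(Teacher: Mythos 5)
Your argument is correct and is essentially the paper's own proof: from a witness $p$ with $n=2p+1$, expand $n^2=(2p+1)^2=4p^2+4p+1=2(2p^2+2p)+1$ to exhibit $n^2$ as odd. The additional remarks about the availability of the decomposition $n=2k$ or $n=2k+1$ and the alternative induction on odd $n$ are fine but not needed.
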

 \begin{proof} Find $p$ such that $n = 2p +1$. Then $n^2 = (2p+1)(2p+1)=4p^2 + 4p +1= 2(2p^2 +2p) +1$. \end{proof}
 
 \begin{theorem}\label{T:sqrt2irr} There are no positive integers $m,n$  such that $\sqrt{2}=\frac{m}{n}$. \end{theorem}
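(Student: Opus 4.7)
The plan is an infinite descent built on Lemma \ref{L:odd}. Following the discussion in $\S 3.4$, I make a temporary assumption \emph{for the sake of the argument}: positive integers $m, n$ with $\sqrt{2} = m/n$ are at hand, and I aim to derive $0 = 1$. Squaring and clearing denominators gives $m^2 = 2 n^2$, so $m^2$ is even, i.e.\ not odd. Applying the contrapositive of Lemma \ref{L:odd} --- legitimate because on $\mathbb{N}$ the predicate \emph{odd} is decidable, so that \emph{not odd} and \emph{even} coincide --- I conclude $m = 2k$ for some positive integer $k$.

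Substituting $m = 2k$ into $m^2 = 2 n^2$ and cancelling one factor of $2$ produces $n^2 = 2 k^2$. The same argument, applied now to $n$, yields $n = 2 \ell$ for some positive integer $\ell$. The fresh pair $(k, \ell)$ then satisfies $k^2 = 2 \ell^2$, equivalently $\sqrt{2} = k/\ell$, and is strictly smaller: $\ell < n$. Iterating from the initial pair manufactures an infinite strictly descending sequence of positive integers, which is impossible. Equivalently, and more cleanly in exposition, one may begin by choosing among all representations $\sqrt{2} = m/n$ the one with least $n$ --- intuitionistically unproblematic, since the relevant subset of $\mathbb{N}$ is decidable --- and then observe that the pair $(k, \ell)$ produced above contradicts this minimality.

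The only delicate point, and hence the place where I expect the main obstacle to lie from an intuitionistic standpoint, is the contrapositive inference ``$m^2$ is not odd, therefore $m$ is even''. It is perfectly sound here, but it rests on the specific feature of $\mathbb{N}$ that \emph{odd} is a decidable predicate; this is a feature that will not survive unchanged when the discussion moves to the real numbers in later sections. Apart from this observation, everything reduces to routine arithmetic with the aid of the lemma already in hand.
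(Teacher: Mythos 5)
Your proof is correct and rests on the same key step as the paper's: applying the contrapositive of Lemma~\ref{L:odd}, justified (as you rightly flag) by the decidability of parity on $\mathbb{N}$, to show both $m$ and $n$ must be even. The only cosmetic difference is that the paper normalizes up front (``assume at least one of $m,n$ is odd'') and reaches a contradiction in one pass, whereas you obtain the same normalization via minimal choice of $n$ or infinite descent.
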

 
 \begin{proof} Suppose there are. Find such $m,n$ and assume: at least one of $m,n$ is odd. Then $\sqrt{2} =\frac{m}{n}$, so $2n^2 = m^2$, so $m^2$ is even and, by Lemma \ref{L:odd}, also $m$ is even and $4$ divides $m^2=2n^2$, so $n^2$ is even and, again by Lemma \ref{L:odd}, $n$ is even. We thus see: both $m,n$ are even. Contradiction. \end{proof}
 
 Already in ancient times, some people worried about this proof, in particular about the sentence: `\textit{Suppose there are.}'.  How is it possible to \textit{suppose} that $\sqrt{2}$ is rational? `\textit{Suppose}' must mean something like `\textit{Imagine}'. How can one make a clear mental picture of a fact that will turn out to be false? How to imagine the impossible?

 In particular, Parmenides (fl. 475 B.C.) and Zeno (fl. 450 B.C.), leading members  of the {\it Eleatic school of philosophy},  felt this difficulty. The Dutch mathematician G.F.C. Griss (1898-1953) shared their concern and suggested to Brouwer to remove the proposition without proof, negation and the empty set from the discourse of mathematics. He started  a redevelopment of intuitionistic mathematics, see, for instance, \cite{griss}.  Brouwer did not want to go into his suggestion, and offered the somewhat surprising argument that, following it,  one would impoverish the subject, see \cite{brouwer48a}.  Nevertheless, in intuitionistic, and, more generally, in constructive mathematics, one avoids negation as much as possible, also for the reason that negative results lack {\it constructive content}.

The negatively formulated Theorem \ref{T:sqrt2irr} may  be replaced  by a  positive and {\it affirmative} result, as follows.
 
\begin{definition} A real number $x$ is  \emph{positively irrational} if and only if, given any rational number $q = \frac{m}{n}$ one may effectively find a positive integer $p$ such that $|x - \frac{m}{n}|\ge \frac{1}{p}$.\end{definition}
 
 \begin{theorem} $\sqrt{2}$ is positively irrational. \end{theorem}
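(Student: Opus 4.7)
The strategy is to exploit the algebraic factorization
$$\sqrt{2} - \frac{m}{n} \;=\; \frac{2 - (m/n)^2}{\sqrt{2} + m/n},$$
which converts the question of how close $\sqrt{2}$ can come to a rational $m/n$ into the much more tractable question of how close the integer $m^2$ can come to $2n^2$.

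First, assume $m,n$ are positive integers. I would rewrite
$$\left|\sqrt{2} - \frac{m}{n}\right| \;=\; \frac{|2n^2 - m^2|}{n(n\sqrt{2}+m)}.$$
By Theorem \ref{T:sqrt2irr} we have $2n^2 \neq m^2$; but $2n^2 - m^2$ is an \emph{integer}, so being nonzero immediately upgrades to $|2n^2 - m^2| \geq 1$. This integer rigidity is the bridge from the negative Theorem \ref{T:sqrt2irr} to the positive, quantitative statement that is required here. To bound the denominator I would use the crude estimate $\sqrt 2 < 2$, giving $n\sqrt 2 + m < 2n+m$, and therefore
$$\left|\sqrt{2} - \frac{m}{n}\right| \;\geq\; \frac{1}{n(2n + m)}.$$
Setting $p := n(2n+m)$ produces a positive integer effectively computed from $m$ and $n$, as required by the definition.

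For the remaining case, $m \leq 0$, there is nothing to do: one has $|\sqrt{2} - m/n| \geq \sqrt 2 > 1$, so that $p := 1$ already works.

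No genuine obstacle is expected; the only step worth emphasising is the passage from Theorem \ref{T:sqrt2irr} to the inequality $|2n^2 - m^2|\geq 1$. That step is trivial once noticed, but it illustrates the point of the preceding discussion: the negatively stated Theorem \ref{T:sqrt2irr} in fact carries enough information to support a positive, effective reformulation, and the price for this upgrade is nothing more than the observation that the integers are discrete.
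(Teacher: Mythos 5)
Your proof is correct and takes essentially the same route as the paper: both rest on the conjugate factorization $\bigl|\sqrt 2 - \tfrac{m}{n}\bigr|\cdot\bigl|\sqrt 2 + \tfrac{m}{n}\bigr| = \tfrac{|2n^2-m^2|}{n^2}$ together with the integrality step $|2n^2-m^2|\ge 1$ drawn from Theorem \ref{T:sqrt2irr}. The only cosmetic difference is in how the other factor is bounded: the paper splits into the cases $\tfrac{m}{n}>2$ and $\tfrac{m}{n}\le 2$ to get $\ge \tfrac{1}{4n^2}$, whereas you bound $n\sqrt 2 + m < 2n+m$ uniformly to get $p=n(2n+m)$ (and you also treat $m\le 0$ explicitly), which is a slightly cleaner organization of the same idea.
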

 
 \begin{proof} Let $m,n$ be given. Note $2n^2 \neq m^2$, and therefore: $|2n^2 - m^2| \ge 1$ and $|2 - \frac{m^2}{n^2}|\ge \frac{1}{n^2}$, that is $|\sqrt{2}+\frac{m}{n}|\cdot|\sqrt{2}-\frac{m}{n}| \ge \frac{1}{n^2}$. If $\frac{m}{n} > 2$, then $\frac{m}{n} - \sqrt{2}>\frac{1}{2}$ and, if $\frac{m}{n} \le 2$, then  $|\sqrt{2}+\frac{m}{n}|< 4$ and $ |\sqrt{2}-\frac{m}{n}|> \frac{1}{4n^2}$. \end{proof}

 \section{Uncountably many reals}\label{S:uncount}

  We  sketch how to treat {\it integers} and {\it rationals} and then define the notion of a {\it real number}, essentially like G. Cantor (1845-1918) did it, and  Brouwer after him. 
  
  \subsection{Integers and rationals} One first introduces, on the set $\mathbb{N}$ of the natural numbers, the operations $+, \cdot, exp$ and the relations $<,\le$   by {\it recursive} definitions, as suggested in Section \ref{S:intuition}, and proves, by {\it induction},  their well-known nice properties. One then defines a {\it pairing function}  from $\mathbb{N}\times\mathbb{N}$ to $\mathbb{N}$, for instance the function $(m,  n) = 2^m(2n+1) -1$, with inverse functions $K,L:\mathbb{N}\rightarrow\mathbb{N}$. We will write $m'=K(m)$ and $m''=L(m)$, so, for each $m$, $m=(m', m'')$. 
  
  Then an equivalence relation $=_\mathbb{Z}$ on $\mathbb{N}$ is introduced by: \\$m=_\mathbb{Z} n \leftrightarrow m'+n''=m''+n'$. Integers, for us, are just natural numbers and not, as in the usual treatment,  the equivalence classes of the relation $=_\mathbb{Z}$.  Functions and relations on $\mathbb{Z}$ are  functions and relations on $\mathbb{N}$ that respect the relation $=_\mathbb{Z}$.  The functions $+_\mathbb{Z}, \cdot_\mathbb{Z}$, the relations $<_\mathbb{Z}, \le_\mathbb{Z}$,and the special elements $0_\mathbb{Z}, 1_\mathbb{Z}$ are defined in the way one would expect.
  
  One then defines $\mathbb{Q}:=\{m\mid \neg (m''=_\mathbb{Z} 0_\mathbb{Z})\}$. An equivalence relation $=_\mathbb{Q}$ is introduced on $\mathbb{Q}$ by: $m=_\mathbb{Q} n \leftrightarrow m'\cdot_\mathbb{Z} n'' =_\mathbb{Z} m''\cdot_\mathbb{Z} n'$. Rationals, for us, are just natural numbers, but, whenever we consider natural numbers as rationals, we respect $=_\mathbb{Q}$.  The functions $+_\mathbb{Q}, \cdot_\mathbb{Q}$, the relations $<_\mathbb{Q}, \le_\mathbb{Q}$, and the special elements $0_\mathbb{Q}, 1_\mathbb{Q}$ are defined straightforwardly. 
  
  In general, we omit the subscripts `$_\mathbb{Z}$', `$_\mathbb{Q}$' unless we fear for confusion,
  
  It is important to realize that all functions and relations mentioned so far are algorithmic.  We learned the algorithms at elementary school and know, for instance, how to find out, given rationals $p,q$, if $p=_\mathbb{Q} q$ or not.
  
 \subsection{Real numbers} We now are ready to introduce the reals.
  
 \begin{definition} A \textit{real number} is an \emph{ approximation process}, an infinite sequence $x=
  x(0), x(1), \ldots$ of pairs $x(n)=\bigl(x'(n), x''(n)\bigr)$ of rationals  such that 
       \begin{enumerate}[\upshape (i)] 
       \item \emph{$x$ is shrinking:} 
       for all $n$, $x'(n)\le x'(n+1)\le x''(n+1)\le x''(n)$  and 
       
       \item \emph{$x$ is dwindling:} for all $m$, there exists $n$ such that $x''(n) - x'(n) \le \frac{1}{2^m}$. \end{enumerate}
       
       $\mathcal{R}$ is the set of all real numbers. We use $x, y, \ldots$ as variables over $\mathcal{R}$. 
 \end{definition}      
       
Brouwer gave Cantor's  notion his own colouring. \\Every real number $x= x(0), x(1), \ldots$  is an \textit{infinite sequence of successive approximations}, and, like the infinite sequence $0,1,2, \ldots$ itself,   never finished and always under construction.  Even if an algorithm has been given determing the successive values of $x$, the finding of these successive values keeps us busy forever.   The {\it number} is the process of approximation itself: it makes no sense to ask for {\it the limit point} that the successive approximations are striving for.      
       
 For all reals $x$, for all rationals $p,q$, for all $n$, if $p<_\mathbb{Q} x'(n)<_\mathbb{Q} x''(n)<_\mathbb{Q} q$, we say that $(p,q)$ is an \textit{approximation of $x$ of precision $q-_\mathbb{Q} p$.}

       \begin{definition}  
  For all real numbers $x, y$, we define: 
  
  $x <_\mathcal{R} y$ ($x$ \emph{is smaller than $y$}) if and only if $\exists n[x''(n) <_\mathbb{Q} y'(n) ]$, and: 
  
  $x\le_\mathcal{R} y$ (\emph{$x$ is not greater than $y$}) if and only if $\forall n [x'(n)\le_\mathbb{Q} y''(n)]$, and:
   
   $x \;\#_\mathcal{R}\; y$ ($x$ \emph{is apart from } $y$) if and only if either $x <_\mathcal{R} y$ or $ y <_\mathcal{R} x$, and:

   $x =_\mathcal{R} y$ (\emph{$x$ coincides with $y$}) if and only if $x\le_\mathcal{R}y$ and $y\le_\mathcal{R}x$.
   \end{definition}
 We  suppress the subscript `$_\mathcal{R}$' if we expect that, doing so, we  will not confuse the reader. 
  
   Traditionally, our `{\it real numbers}' would be called `{\it number generators}'.\\The name `{\it real number}' would then be reserved for equivalence classes of the form \\$[x]=\{y\in\mathcal{R}\mid y =_\mathcal{R}x\}$. \\Also Brouwer does so, using the terms `{\it point}' and `{\it point kernel}'.
   
 We  will not make this distinction   but will only consider  operations and relations on $\mathcal{R}$ that respect the equivalence relation $=_\mathcal{R}$. 
   
   \smallskip The relations $<_\mathcal{R}$ and $\#_\mathcal{R}$ are \textit{positive} relations. From a constructive point of view, they are more important than the relations $\le_\mathcal{R}$ and $=_\mathcal{R}$. The latter two relations are {\it negative}, and, moreover, expressible in terms of the positive relations, as:  $x\le_\mathcal{R} y \leftrightarrow \neg (y<_\mathcal{R} x)$ and: $x=_\mathcal{R} y \leftrightarrow \neg (x \;\#_\mathcal{R}\; y)$. Given real numbers $x,y$ one may {\it prove}:  `$x=_\mathcal{R} y$' by assuming: `$x\;\#_\mathcal{R}\; y$' and obtaining a contradiction. 
   
   It is important that the positive relations $<_\mathcal{R}$ and $=_\mathcal{R}$ are {\it co-transitive}, i.e. \\$x<_\mathcal{R} y\rightarrow (x<_\mathcal{R} z \;\vee\;z<_\mathcal{R} y)$ and $x\;\#_\mathcal{R}\; y\rightarrow (x\;\#_\mathcal{R}\; z \;\vee\;z\;\#_\mathcal{R}\;y)$.
   
   It suffices to prove this for the first relation. \\Given $x,y,z$ such that $x<_\mathcal{R}y$, find $n$ such that $z''(n)-z'(n)<_\mathbb{Q} y'(n) -x''(n)$ and decide: either $x''(n)<_\mathbb{Q}z'(n)$ and $x<_\mathcal{R} z$ or: $z''(n)<_\mathbb{Q} y'(n)$ and $z<_\mathcal{R}y$.

  \subsection{Cantor's proof} In 1873, Cantor proves the following, see \cite[\S 2]{cantor}:

  \begin{theorem}\label{T:cantor} For every infinite sequence  $x_0, x_1, \dots$  of reals, there exists a real  $x$  apart from every real in the sequence, that is: $x \;\#_\mathcal{R}\; x_0, x \;\#_\mathcal{R}\; x_1, \ldots $ \end{theorem}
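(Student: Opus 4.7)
The plan is to mimic Cantor's diagonal argument but to avoid any appeal to decimal or binary expansions (which in $\mathcal{R}$ are not canonical and whose digits are not in general decidable) and instead to produce $x$ directly as an approximation process whose nested rational intervals successively avoid each $x_n$. The key constructive ingredient will be the co-transitivity of $<_\mathcal{R}$ just established: it is the positive replacement of the excluded middle that is exactly strong enough to make the diagonal step effective.

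First, I would build by recursion a sequence of rational pairs $(a_n, b_n)$ satisfying $a_n <_\mathbb{Q} b_n$, $a_n \le_\mathbb{Q} a_{n+1}$, $b_{n+1} \le_\mathbb{Q} b_n$, $b_{n+1} - a_{n+1} \le \frac{1}{3}(b_n - a_n)$, and, most importantly, $x_n <_\mathcal{R} a_n$ or $b_n <_\mathcal{R} x_n$. For the base step I read off any approximation $\bigl(x_0'(N), x_0''(N)\bigr)$ of $x_0$ and set $a_0 := x_0''(N) + 1$, $b_0 := x_0''(N) + 2$, so that $x_0 <_\mathcal{R} a_0$ is already witnessed. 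For the inductive step, given $a_n <_\mathbb{Q} b_n$, I put $c := a_n + \frac{1}{3}(b_n - a_n)$ and $d := a_n + \frac{2}{3}(b_n - a_n)$, and apply co-transitivity of $<_\mathcal{R}$ to $c <_\mathcal{R} d$ and the point $x_{n+1}$. If the disjunct $x_{n+1} <_\mathcal{R} d$ appears, I set $(a_{n+1}, b_{n+1}) := (d, b_n)$, securing $x_{n+1} <_\mathcal{R} a_{n+1}$; if instead $c <_\mathcal{R} x_{n+1}$ appears, I set $(a_{n+1}, b_{n+1}) := (a_n, c)$, securing $b_{n+1} <_\mathcal{R} x_{n+1}$.

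With the intervals in place I define $x \in \mathcal{R}$ by $x(n) := (a_n, b_n)$; the shrinking condition is immediate from the monotonicity of $(a_n)$ and $(b_n)$, and the dwindling condition follows from the geometric bound $b_n - a_n \le (b_0 - a_0)/3^n$. To verify $x \;\#_\mathcal{R}\; x_n$ I fix $n$, consult whichever disjunct the construction produced, and unfold the definition of $<_\mathcal{R}$: if $x_n <_\mathcal{R} a_n$ is witnessed by some $N$ with $x_n''(N) <_\mathbb{Q} a_n$, then for $k := \max(n, N)$ the shrinking of $x_n$ together with the monotonicity of $(a_n)$ give $x_n''(k) \le_\mathbb{Q} x_n''(N) <_\mathbb{Q} a_n \le_\mathbb{Q} a_k = x'(k)$, whence $x_n <_\mathcal{R} x$; the case $b_n <_\mathcal{R} x_n$ is symmetric and yields $x <_\mathcal{R} x_n$. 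The only real obstacle is to make the inductive case split an honest constructive disjunction rather than an appeal to the excluded middle, and the trick is exactly to split into \emph{thirds} rather than halves: this leaves the genuine rational gap $c <_\mathbb{Q} d$ needed to invoke co-transitivity of $<_\mathcal{R}$ on $x_{n+1}$.
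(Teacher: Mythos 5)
Your proposal is correct and follows essentially the same route as the paper: a trisection argument that, at stage $n$, isolates $x_n$ into the left or right third of the current rational interval and keeps the opposite third, so that the resulting approximation process is positively apart from each $x_n$. The only cosmetic difference is that you invoke co-transitivity of $<_\mathcal{R}$ as a named lemma where the paper inlines the same argument (finding a rational approximation of $x_n$ of width below $\tfrac{1}{3^{n+1}}$ and making a decidable rational comparison), and you start the interval to the right of $x_0$ rather than at $(0,1)$; both choices are sound and do not change the substance of the proof.
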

 \begin{proof} Let an infinite sequence $x_0, x_1, x_2, \ldots$ of reals be given. 
 
 Define   $x=x(0), x(1), \ldots$, as follows, step by step.  
 
 First, define $x(0)=(0,1)$.
 
 Now assume  $x(n)$ has been defined and $x''(n)-x'(n)=_\mathbb{Q}\frac{1}{3^n}$.   
 
 Find $m_0 :=$ the least $m$ such that $x_n''(m)-x_n'(m)<_\mathbb{Q}\frac{1}{3^{n+1}}$. 
 
 Note: \textit{either} $\frac{2}{3}x'(n) +_\mathbb{Q}\frac{1}{3}x''(n)<_\mathbb{Q}x_n'(m_0)$   \textit{or} $x_n''(m_0)<_\mathbb{Q}\frac{2}{3}x'(n) +_\mathbb{Q}\frac{1}{3}x''(n)$. 
 
   \textit{If} $\frac{2}{3}x'(n) +_\mathbb{Q}\frac{1}{3}x''(n)<_\mathbb{Q}x_n'(m_0)$,  define: $x(n+1) := \bigl(x'(n), \frac{2}{3}x'(n) +\frac{1}{3}x''(n)\bigr)$ and, \textit{if not}, define $x(n+1) := \bigl( \frac{1}{3}x'(n) +\frac{2}{3}x''(n), x''(n)\bigr)$.  
   
    By this choice of $x(n+1)$, we ensure $x''(n+1)-x'(n+1)=_\mathbb{Q}\frac{1}{3^{n+1}}$ and $x \;\#_\mathcal{R}\; x_n$. \end{proof}
  
  Our constructive argument differs but little from Cantor's.  Cantor makes it perfectly clear how,   given any countable list  $x_0, x_1, \ldots ,$ of   reals,  one produces a real  positively different from every real on the list.
  
  We thus see that the statement `{\it $\mathcal{R}$ is uncountable}', essentially, is a {\it positive}
  statement.
  
 Note that, by his {\it exemplary} proof,  Cantor suggest to  us how we should  understand and prove a  proposition of the form `\textit{$A\subseteq\mathcal{R}$ is uncountable}'.
 
 \smallskip One  defines operations $+_\mathcal{R}$ and $\cdot_\mathcal{R}$ on $\mathcal{R}$ by: \\for all $n$, $(x+_\mathcal{R} \;y)(n):=\bigl(x'(n) +_\mathbb{Q} y'(n), x''(n)+_\mathbb{Q}y''(n)\bigr)$ and \\$(x\cdot_\mathcal{R} y)(n)= \bigl(\min_\mathbb{Q}(x'(n)\cdot_\mathbb{Q} y'(n), x'(n)\cdot_\mathbb{Q} y''(n),x''(n)\cdot_\mathbb{Q} y'(n),x''(n)\cdot_\mathbb{Q} y''(n)),\\ \max_\mathbb{Q}(x'(n)\cdot_\mathbb{Q} y'(n), x'(n)\cdot_\mathbb{Q} y''(n),x''(n)\cdot_\mathbb{Q} y'(n),x''(n)\cdot_\mathbb{Q} y''(n))\bigr)$. 
 
 Also subtraction and an absolute value function $x\mapsto |x|$ may be defined. 
 
 For every rational $q$, we define $q_\mathcal{R}$ in $\mathcal{R}$ such that, for all $n$,  \\$q_\mathcal{R}(n)= (q-_\mathbb{Q}\frac{1}{2^n}, q+_\mathbb{Q}
 \frac{1}{2^n})$. 
 
 We omit subscripts `$_\mathcal{R}$' where we think it does no harm to do so.
  
  \section{Fugitive integers and  oscillating reals}\label{S:osc} The following definition lies at the basis of  many {\it counterexamples in Brouwer's  style} to  various mathematical results that do not stand a constructive reading.
  
  We consider the decimal expansion of $\pi$, evaluating it step by step, hunting for the first block of 99 consecutive 9's. We would like to define $k_{99}$ as the least number $n$  such that at the places $n, n+1, \ldots n+98$ in the decimal expansion of $\pi$ we find the value $9$, but we have to be careful as we do not know if such a number $n$ exists. 
  
   \begin{definition}[The fugitive `number' $k_{99}$]\label{D:k99} Let $d= d(0), d(1), \ldots$ be the decimal expansion of $\pi$, that is: $d$  is the function from $\mathbb{N}$ to $\{0,1, \ldots,9\}$ such that \\$\pi = 3+\sum_{n =0}^{\infty}d(n)\cdot 10^{-n-1}$.  
  
  For each $n$, we define:
  
  $ k_{99}\le n$ if and only if  $\exists j\le n \forall i < 99[d(j+i) = 9]$, and:
  
  $n<k_{99}$ if and only  if $\forall j\le n \exists i<99[d(i+j)\neq 9]$, and:
  
  $n=k_{99}$ if and only if $ k_{99}\le n$ and $\forall j<n[j<k_{99}]$.
  \end{definition}
 Note that we do not define a natural number $k_{99}$ but only the meaning of an expression like `$ k_{99}\le n$'.
 
 Also note that, for each $n$, one may find out, for each of the propositions `$k_{99}\le n$', `$n<k_{99}$', and `$n=k_{99}$', if they are true or not, by simply calculating the first $n$ values of the decimal expansion of $\pi$.
 
 The statement `$P:=\exists n[n=k_{99}]$' is a prime example of an {\it undecided proposition}, i.e. we do not have a proof of $P$, but we also do not have a proof that we  never will find one, and we have no procedure to find either one of these proofs in finitely many steps. 
 
  The problem of the 99 9's in the decimal expansion of $\pi$ is not important in itself. It only is a {\it pedagogical example}, showing that we have no method to solve {\it this kind of problems}. Should someone, by historical accident, find the 99 9`s, then one easily formulates a similar proposition that is still undecided.

  \begin{definition}[The `oscillating' reals  $\rho_0, \rho_1, \rho_2$]\label{D:osc}$\;$ \\
  We define real numbers $0_\mathcal{R}$, $\rho_0, \rho_1$ and $\rho_2$, as follows.
  
  $0_\mathcal{R}$ is the real number such that, for all $n$, $0_\mathcal{R}(n)=(-\frac{1}{2^n}, \frac{1}{2^n})$.
  
  \smallskip
   $\rho_0=(\frac{1}{2})^{k_{99}}$ is the real number such that, for all $n<k_{99}$,  $\rho_0(n) :=(-\frac{1}{2^n}, \frac{1}{2^n})$, and, for all $n \ge k_{99}$,  $\rho_0(n) :=(\frac{1}{2^{k_{99}}}, \frac{1}{2^{k_{99}}})$.
   
   \smallskip
  
  $\rho_1=(-\frac{1}{2})^{k_{99}}$ is the real number such that,   for all  $n< k_{99}$,   $\rho_1(n) =(-\frac{1}{2^n}, \frac{1}{2^n})$, and, for all $n \ge k_{99}$,  \textit{if $k_{99}$ is even}, then $\rho_1(n):=(\frac{1}{2^{k_{99}}}, \frac{1}{2^{k_{99}}})$, and, \textit{if $k_{99}$ is odd}, then $\rho_1(n) :=(-\frac{1}{2^{k_{99}}}, -\frac{1}{2^{k_{99}}})$. 
  
  \smallskip
   Finally,  $\rho_2:= \rho_0+_\mathcal{R}\rho_1$.\end{definition}
   
   Now note the following:
   
   \smallskip

 (1)  $0_\mathcal{R} =_\mathcal{R} \rho_0\leftrightarrow \forall n[n < k_{99}]$,  and we have no proof of `$\forall n[n < k_{99}]$'.

  Also: $\neg(0_\mathcal{R} =_\mathcal{R} \rho_0)\leftrightarrow \neg\forall n[n < k_{99}]$,  and we have no proof of `$\neg \forall n[n < k_{99}]$'. 
  
  Finally: $0_\mathcal{R} <_\mathcal{R} \rho_0\leftrightarrow \exists n[n=k_{99}]$,  and we have no proof of `$\exists n[n = k_{99}]$'.
  
  We thus see that, in general, given real numbers $x,y$, we have no means of proving one of the propositions `$x=_\mathcal{R} y$', `$\neg(x=_\mathcal{R} y)$' , `$x <_\mathcal{R}y$'.

  \smallskip (2) Note: $0_\mathcal{R} \le_\mathcal{R}\rho_0$, but, as we saw, we have no proof of `$0_\mathcal{R}=_\mathcal{R} \rho_0$', nor of `$0_\mathcal{R}<_\mathcal{R} \rho_0$'.
  
  We thus see that, in general, given real numbers $x,y$ such that $x\le_\mathcal{R}y$, we have no means of proving either one of the propositions `$x=_\mathcal{R} y$' or  `$x <_\mathcal{R} y$'.

  \smallskip

 (3)  Note: $0_\mathcal{R}\le_\mathcal{R} \rho_1 \leftrightarrow \forall n[n = k_{99}\rightarrow \exists m [n=2m]]$, and we have no proof of: `$\forall n[n = k_{99}\rightarrow \exists m [n=2m]]$'.
  
  Note: $\rho_1\le_\mathcal{R}0_\mathcal{R} \leftrightarrow \forall n[n = k_{99}\rightarrow \exists m [n=2m+1]]$, and we have no proof of: `$\forall n[n = k_{99}\rightarrow \exists m [n=2m+1]]$'.

We thus see that, in general, given real numbers $x,y$, we have no means of proving either one of the propositions `$x\le_\mathcal{R} y$', `$y\le_\mathcal{R} x$'.

  \smallskip

 (4) Note:  $\rho_2 =_\mathcal{R} 2\cdot \rho_0\leftrightarrow \forall n[n=k_{99}\rightarrow \exists m[n=2m]]$ and  \\ $\rho_2=_\mathcal{R} 0_\mathcal{R}\leftrightarrow\forall n[n=k_{99}\rightarrow \exists m[n=2m+1]]$. We have no means of proving either one of the propositions `$\rho_2=_\mathcal{R} 2\cdot \rho_0$', `$\rho_2=_\mathcal{R} 0_\mathcal{R}$'.
 \\ We will use use the number $\rho_2$ in Section \ref{S:ivt}.
  
  \medskip
   $\rho_0, \rho_1, \rho_2$ are examples of real numbers  \textit{oscillating around $0_\mathcal{R}$}.
   
   $\rho_0$ \textit{oscillates above $0_\mathcal{R}$} and $\rho_1$ \textit{oscillates up and down around $0_\mathcal{R}$}.
   
    $\rho_2=\rho_0+\rho_1$ \textit{oscillates between $0_\mathcal{R}$ and $2\cdot \rho_0$}.
  
 \section{Rejecting $P\;\vee\;\neg P$}\label{S:tndfalse}
 
 \subsection{Mathematics and Logic}
 
 Brouwer describes mathematics as being developed by the mind having come to awareness and   playfully exploring its possibilities, see, for instance, \cite{brouwer48}.  Mathematics is not dependent on  any evidence from the senses. All other science is applied mathematics, as mathematics recognizes and enforces patterns on the data acquired by observation.
 
  Logic in particular is not a \textit{foundational} science, prescribing the way one should think, but an \textit{observational} science.  Mathematics is not reigned by logic. Mathematics is a \textit{languageless activity of the mind}. As we observed already in Section \ref{S:intr}, only when we want to communicate about mathematics, language comes in. We  communicate with other people, trying to induce them to make mathematical constructions like the ones we ourselves are making, and we  communicate with ourselves, understanding the weakness of our memory, and hoping to be able  to remind ourselves tomorrow of what we did today.  
 Observing ourselves and others when we are busy communicating about mathematics, we take notes of the sounds and signs that are used. We  discover patterns and regularities, and then promote such regularities to \textit{laws of logic}. There is no guarantee, however, that someone, even myself,  who is making sounds or giving other signs in accordance with these laws is actually succeeding in making successful mathematical constructions. In this sense, the `\textit{laws of logic}' are \textit{unreliable}.

\medskip
Learning logic is part of learning the language of mathematics. Learning the meaning of the connectives `{$\ldots$\it or$\ldots$}', `{$\ldots$\it and $\ldots$}', `{$\ldots$ if $\ldots$ \it}', and  `{\it not :$\ldots$}', and the quantifiers `$\exists x \in V[\ldots x \ldots]$' and `$\forall x \in V[\dots x\ldots]$',  is like  learning the meaning of   the expressions  \textit{`infinite'} or \textit{`uncountable'} in the case of Theorems \ref{T:euclid} and \ref{T:cantor}. 

A \textit{connective} $\ast$ is a general method to obtain  a new proposition, $P\ast Q$, from any two given propositions, $P$ and $Q$. Understanding $P$ and $Q$ means  having some idea about what counts as a {\it proof} of $P,Q$, respectively. If we are able to point out in general terms  what we  should consider as a proof of $P\ast Q$, given our understanding of proofs of $P,Q$, we  will have explained  the meaning of the connective $\ast$. 

\subsection{Disjunction}\label{SS:disj} Let us start with the case of disjunction, $\vee$.

 If someone says to us: `\textit{I have a proof of $P \vee Q$, $P$ \textit{or} $Q$}', we \textit{expect}, from our experience with earlier situations in which the word \textit{`or'} occurred, that he will come up with a proof of  $P$ or with a proof of $Q$. We therefore adopt the rule that a proof of $P\vee Q$ must consist either in a proof of $P$ or in a proof of $Q$.

 One might  bring forward that disjunction is not always taken in this constructively strong sense.  There were other situations too! Some mathematicians might even say:  `\textit{When \emph{I} make  a statement $P\vee Q$, I merely want to remind my reader that $\neg P$ and $\neg Q$ are not both true}'. 
  
  The intuitionistic mathematician  then answers: `\textit{ Sometimes one uses  the stronger interpretation and at other times one uses the weaker one. Let us end this confusion and make the  first and stronger interpretation the {\em canonical} one and always watch what we want to say.}'

\medskip
Our decision implies that the  statement:

\begin{center}
$\exists n[ k_{99}\le n]\;\vee\;\forall n[n<k_{99}]$, i.e. 
  $\exists n[ k_{99}\le n ]\;\vee\;\neg \exists  n[ k_{99}\le n]$.\end{center}
is not a true statement.

Why?

   Until now, we did not find  a place in the decimal expansion of $\pi$ where there is an uninterrupted sequence of $99$ digits 9: we have no proof of `$\exists n[ k_{99}\le n]$'.
   
    Until now, we also did not find a clever argument showing that such an uninterrupted sequence of $99$ digits $9$ never will be found: we have no proof of `$\neg \exists n[ k_{99}\le n]$'. 
     
     Until now, therefore, we have no proof of `$\exists n[ k_{99}\le n]\;\vee\;\neg \exists  n[ k_{99}\le n]$'.
     
     \smallskip
    Note that there is an  asymmetry in the two horns of the dilemma. The (possible) truth of $\exists n[k_{99}\le n]$ will become clear to anyone who just patiently calculates the successive values of the decimal expansion of $\pi$, but the discovery of the (possible) truth of $\forall n[n<k_{99}]$  requires mathematical ingenuity.
 
 \smallskip
     
     The example shows that the principle \begin{center} {\it If} $\forall n[P(n)\;\vee\;\neg P(n)]$, {\it then} $\exists n[P(n)]\;\vee\;\neg \exists n[P(n)]$ \end{center} is unreliable.
     
     How did we come to trust it?
     
      We unthinkingly generalized our experience with handling \textit{finite} sets and \textit{bounded} quantifiers. After all, the following  holds, for any given $m$, even for $m=10^{10^{10}}$: \begin{center} {\it If} 
$\forall n\le m[P(n)\;\vee\;\neg P(n)]$, {\it then} $\exists n\le m[P(n)]\;\vee\;\neg \exists n\le m[P(n)]$, \end{center}
 because, at least in principle, we can check each of the numbers $0, 1, \ldots , m$ and find out if one of them has the property $P$.
 
 The  case $\{0,1, \ldots, m\}$ does not extend to the case $ \{0,1,2,\dots\}$. 
\\ Checking each of the infinitely many numbers $0,1,2,\ldots$ is \textit{not} feasible. 
 If we think we can do it we must imagine ourselves to be angels rather than human beings. 
 \subsection{`Reckless' or `hardy' statements} When in a playful mood, we call the statement: $$\exists n[k_{99}=n]\;\vee\;\forall n[n<k_{99}]$$ a \textit{reckless} or \textit{hardy} statement. Actually, it is foul play. We call the mathematician upholding the above statement as true `reckless' because we take him to read the statement constructively. Probably, he will protest.
 
 For us, the terms `reckless' and `hardy' indicate that the statement called so has no constructive proof, although the non-intuitionistic mathematician, using his own reading, sees no objection.
 
 Other examples of reckless statements  are: $$\neg\forall n[n<k_{99}]\;\vee\;\forall n[n<k_{99}]$$
 $$\exists n[n=k_{88}]\rightarrow \exists n[n=k_{99}]$$
 $$\forall n[2n\neq k_{99}]\;\vee\;\forall n[2n+1\neq k_{99}]$$
 $$\exists n[n=k_{88}]\rightarrow (\forall n[2n\neq k_{99}]\;\vee\;\forall n[2n+1\neq k_{99}])$$
 
 `$n=k_{88}$' here stands for: `$n$ is the least number $m$ such that at the places $m, m+1, \ldots, m+87$ in the decimal expansion of $\pi$ we find the value $8$'.
 
 Theorems implying a reckless statement themselves will be considered reckless.
 \subsection{Brouwer's example and the halting problem}\label{SS;haltingproblem}
 
The above example of an undecided proposition, `$\exists n[n=k_{99}]$', is simple and very important. A first such example was given by Brouwer  in \cite{brouwer08}.

   The example is related to  the   discovery, in 1936, of the `\textit{halting problem}' by A.M.~Turing (1912-1954). We briefly describe this problem using the approach developed by S.C.~Kleene (1909-1994), see \cite[Chapter XI]{kleene52a}. 
  
  Natural numbers are used for coding \textit{algorithms} for (partial) functions from $\mathbb{N}$ to $\mathbb{N}$. 
  There is a computable predicate on triples of natural numbers called $T$. For all $e,n,z$, $T(e,n,z)$ if and only if $z$ codes a successful computation according to the algorithm coded by $e$ at the argument $n$. The algorithm coded by $e$ \textit{halts at $n$} if and only if $\exists z[T(e,n,z)]$. 
  There is a function $U$ from $\mathbb{N}$ to $\mathbb{N}$  that extracts from every $z$ coding a computation the outcome $U(z)$ of the computation.
  A function $\alpha$ from $\mathbb{N}$ to $\mathbb{N}$ is \textit{computable} if and only if there exists $e$ such that \\$\forall n\exists z[T(e,n,z)\;\wedge\;U(z)=\alpha(n)]$. 
  A number $e$ with this property is called an \textit{index} of the function $\alpha$.
  
   Every number $e$ determines a partial function  $\varphi_e$ from $\mathbb{N}$ to $\mathbb{N}$: \\  for all $n$, $\varphi_e(n) = U(z)$, where $z$ is the least number $w$ such that $T(e, n, w)$. \\$\varphi_e$ is only defined at $n$ if $\exists w[T(e,n,w)]$.

\smallskip
   The \textit{(self-)halting problem} is the question if there exists a computable function $\alpha$ from $\mathbb{N}$ to $\mathbb{N}$ such that, for each $e$, 
 \[\alpha(e)\neq 0\leftrightarrow \exists z[T(e,e,z)]\leftrightarrow \varphi_e \;\mathit{is\;defined\;at\;e}.\] Such a computable function $\alpha$ would be called a \textit{solution to the halting problem}.
 
  Turing's theorem is that there is no solution to the halting problem.
 
 \smallskip One may prove, without much difficulty, that there exists $e$ such that \\$\exists n[ k_{99}\le n]\leftrightarrow \exists z[T(e,e,z)]$: the example we considered is one of the problems in the scope of the halting problem. 
 Therefore, a solution to the halting problem would also solve the problem: `$\exists n[k_{99}\le n]$'.  This fact gives some apriori probability to Turing's result. 
 
 \subsection{The other logical `constants'}\label{SS:otherlogic} The so-called \textit{proof-theoretical interpretation} of the disjunction we sketched in Subsection \ref{SS:disj} is extended to  the other logical `constants' as follows.  
 
 \smallskip
 A proof of `{\it $P$ and $Q$}' is a proof of `$P$' together with a proof of `$Q$'. 
 
 A proof of `{\it if $P$, then Q}'  is\\a proof of `$Q$' possibly using `$P$' as an extra assumption. 
 
 A proof of `{\it not $P$}' is a proof of `if $P$, then 0=1'.
 
 \smallskip For explaining the quantifiers, we need the notion of a {\it propositional function}. Let $V$ be a set and let $v\mapsto P(v)$ a function that associates to every $v$ in $V$ a proposition $P(v)$. We then may introduce propositions $\exists x \in V[P(x)]$ and $\forall x\in V[P(x)]$ by stipulating:
 
 A proof of `$\exists x\in V[P(x)]$' consists of an element $v$ of $V$ together with a proof of the proposition `$P(v)$'. 
 
 A proof of $\forall x \in V[P(x)]$ is a function $v\mapsto p(v)$ associating to every $v$ in $V$ a proof $p(v)$ of the proposition `$P(v)$'. 
 
 \smallskip These explanations are not to be considered as exact definitions. They only indicate a provisional intention as to how we want to use our words. 
 
 \section{A test case: the Intermediate Value Theorem}\label{S:ivt}
\subsection{The theorem}  A \textit{function} from $\mathcal{X}\subseteq\mathcal{R}$ to $\mathcal{R}$ is an effective  method $f$, that, given any input $x$ from $\mathcal{X}$, will produce a well-defined outcome $f(x)$ in $\mathcal{R}$, of course respecting the fundamental relation of {\it real coincidence}, i.e. \begin{quote} {\it for all $x,y$ in $\mathcal{X}$, if $x=_\mathcal{R}y$, then $f(x)=_\mathcal{R} f(y)$}. \end{quote}    We take the notion of a function as a \textit{primitive} notion, i.e. we do not explain what a function is by using notions that have been introduced before.

\begin{definition}\label{D:realcontinuousfunction} Let $f$ be a function from $\mathcal{X}\subseteq  \mathcal{R}$ to $\mathcal{R}$ and let $x$ in $\mathcal{X}$ be given. 
\\$f$ is \emph{continuous at $x$} if and only if $\forall p\exists m\forall y\in\mathcal{X}
[|y-x|<\frac{1}{2^m}\rightarrow |f(y)-f(x)|<\frac{1}{2^p}]$. 
\\$f$ is \emph{continuous} if and only if $f$ is continuous at every $x$ in $\mathcal{X}$.  
 \end{definition} 
   The \textit{Intermediate Value Theorem} states the following:
  \begin{quote}
  
\textit{Let $f$ be a  continuous function from $[0,1]$ to $\mathcal{R}$. 
\\ Then $\forall y \in \mathcal{R}[f(0)\le y \le f(1)\rightarrow\exists x \in [0,1][f(x) = y]]$.}\end{quote}

The first version of this Theorem dates from 1817 and is due to B. Bolzano (1781-1848), see \cite{bolzano}.  

Note that the intuitionistic mathematician reads the statement of the Theorem differently from his non-intuitionistic colleague: he understands the logical constants constructively, as sketched in Section \ref{S:tndfalse}. 

\subsection{The  proof as we learnt it} One may use the fruitful method of {\it successive bisection}, as follows. 

\smallskip Let $y$ be given such that $f(0)\le y\le f(1)$.
 
Define   $x$ in $[0,1]$, step by step, as follows.

Define $x(0) := (0,1)$. 
 
\smallskip  Now let $n$ be given such that  $x(n)$ has been defined already.

 Consider $m:=\frac{x'(n)+x''(n)}{2}$, the \textit{midpoint} of the rational interval $\bigl(x'(n), x''(n)\bigr)$.

 If $f(m)\le y$, define $x(n+1) =\bigl(m, x''(n)\bigr)$. 
 
 If  $y<f(m)$, define $x(n+1)= \bigl( x'(n),m\bigr)$.

 \smallskip
 This completes the definition of $x$.
 
 \medskip
 $x$ is a well-defined real number, and, as one verifies by induction, \\
for every $n$, $x''(n)-x'(n)=\frac{1}{2^n}$ and $f\bigl(x'(n)\bigr) \le y \le f\bigl(x''(n)\bigr)$.

We claim: $f(x) =y$, and we prove this claim as follows.

\smallskip Assume $y<_\mathcal{R} f(x)$. Using the continuity of $f$ at $x$, find $r$ such that \\
$\forall z\in[0,1][|x-z|<\frac{1}{2^r}\rightarrow y<_\mathcal{R} f(z)]$.

Note: $|x-x'(r+1)|<\frac{1}{2^r}$, and, therefore, $y<_\mathcal{R} f\bigl(x'(r+1)\bigr)]$, but also   \\$f\bigl(x'(r+1)\bigr)<_\mathcal{R}y$. 

Contradiction. 
  
  Conclude: $\neg\bigl(y <_\mathcal{R} f(x)\bigr)$, i.e. $f(x)\le_\mathcal{R} y$.
  
   \smallskip By a similar argument, conclude: $y \le_\mathcal{R} f(x)$ and: $f(x)=_\mathcal{R} y$. 
\subsection{An objection to the `proof'} The construction of the point $x$, in the above argument, can not be carried out, as, in general, we are unable to decide: \begin{center} $f(m)\le_\mathcal{R} y$ {\it or} $\;y<_\mathcal{R} f(m)$. \end{center}
\subsection{Two counterexamples to the theorem}\label{SS:counterivt} We give two examples showing that the statement of the theorem,  if one reads it constructively, sometimes fails to be true. 

\medskip \textit{First example}.

Let $\rho_1=(-\frac{1}{2})^{k_{99}}$ be the number oscillating up and down around $0_\mathcal{R}$  introduced in Definition \ref{D:osc}.
 Define a function $f_0$ from $[0,1]$ to $\mathcal{R}$ such that \begin{enumerate}[\upshape (i)] \item $f_0(0) = 0$ and $f_0(1) = 1$ and $f_0(\frac{1}{3})=f_0(\frac{2}{3}) = \frac{1}{2}+\rho_1$, and \item $f_0$ is linear on the interval $[0,\frac{1}{3}]$, on the interval $[\frac{1}{3}, \frac{2}{3}]$ and on the interval $[\frac{2}{3},1]$. \end{enumerate}

Note:  $ 0 <\rho_1\leftrightarrow\forall x\in[\frac{1}{3},1][f_0(x)>\frac{1}{2}]$ and \\$\rho_1< 0\leftrightarrow\forall x\in[0,\frac{2}{3}][f_0(x)<\frac{1}{2}]$.

\smallskip

Assume we 
find $x$ such that $f_0(x) = \frac{1}{2}$.

Find $p$ such that $x''(p)-x'(p)<\frac{1}{3}$ and note: 
\textit{either} $ \frac{1}{3}<x'(p)$ \textit{or}  $x''(p)< \frac{2}{3}$.

\smallskip
If $\frac{1}{3}<x'(p)$, then $\frac{1}{3}<x$ and  $\neg (0<\rho_1)$, that is: $ \rho_1\le 0$, and,

if $x''(p)<_\mathbb{Q} \frac{2}{3}$, then $x<\frac{2}{3}$ and  $\neg (\rho_1<0)$, that is: $0 \le \rho_1$.

\smallskip
 Conclude:  either  $0\le \rho_1$ or $\rho_1 \le 0$.

This is a \textit{reckless} or \textit{hardy} conclusion.

\bigskip

\textit{Second example}.

Let $\rho_0=(\frac{1}{2})^{k_{99}}$ the number oscillating above $0_\mathcal{R}$ introduced in Definition \ref{D:osc} and let $\rho_2=\rho_0+\rho_1=(\frac{1}{2})^{k_{99}}+(-\frac{1}{2})^{k_{99}}$ be the number oscillating between $0_\mathcal{R}$ and $2\cdot \rho_0$, also introduced in Definition \ref{D:osc}.

 Define a function $f_1$ from $[0,1]$ to $\mathcal{R}$ such that \begin{enumerate}[\upshape (i)] \item $f_1(0) = 0$ and $f_1(\frac{1}{2}) =\rho_0$ and $ f_1(1)=\rho_2$, and \item $f_1$ is linear on the interval $[0,\frac{1}{2}]$ and on the interval $[\frac{1}{2},1]$. \end{enumerate}
 
 Note: $f_1(0)\le \frac{1}{2}\cdot\rho_2 \le f_1(1)$.
 
 \noindent If $\exists n[2n=k_{99}]$, \\then $0<\rho_0\;\wedge\;\rho_2=2\cdot\rho_0$ and $ \forall x\in [0,1][f_1(x) = \frac{1}{2}\cdot\rho_2 \rightarrow x=\frac{1}{2}]$, and, \\if $\exists n[2n+1=k_{99}]$,\\ then $0<\rho_0 \;\wedge\;\rho_2=0$ and $\forall x\in [0,1][f_1(x)= \frac{1}{2}\cdot\rho_2\rightarrow (x=0\;\vee\; x=1)]$. 
  
\smallskip  Suppose we find $x$ in $[0,1]$ such that $f_1(x)=\frac{1}{2}\cdot\rho_2$.
  
 Find $p$ such that $x''(p)-x'(p) <\frac{1}{2}$ and note:\\ \textit{either} $x''(p)<\frac{1}{2}$ \textit{or} $\frac{1}{2}<x'(p)$ \textit{or} $0<x'(p)$ and $x''(p) <1$.
 
 If $x''(p)<\frac{1}{2}$ or $\frac{1}{2}<x'(p)$, then $\neg \exists n[2n=k_{99}]$.
 
 If $0<x'(p)$ and $x''(p) <1$, then $\neg \exists n[2n+1=k_{99}]$.
 
 We thus see: $\neg \exists n[2n=k_{99}]$ or $\neg \exists n[2n+1=k_{99}]$.
 
 This is a \textit{reckless} or \textit{hardy} conclusion.

\smallskip Note that the above counterexamples pose a serious problem to the mathematician. If the Intermediate Value Theorem does not stand a straightforward constructive reading, what then is its meaning? The defender of the theorem  must mumble something like: `Well, I did not mean that you really could \textit{find} a point where $f$ assumes the intermediate value, but $\ldots$'

 Mathematical theorems without `numerical content' are a shame and an embarassment. 

 We should not, because of these examples, put aside the Intermediate Value Theorem as nonsense, but, arguing carefully, try and find related statements that are constructively true.   
\subsection{An approximate version} It helps to weaken the conclusion of the Intermediate Value Theorem by requiring only that, for every given accuracy $\frac{1}{2^p}$, there exists a point where the function assumes a value closer to the given intermediate value than $\frac{1}{2^p}$.

Brouwer, who knew his own famous fixed-point theorem to be constructively false, formulated and proved such an \textit{approximate} fixed-point theorem, see \cite{brouwer51} \footnote{The 1-dimensional case of Brouwer's fixed-point theorem is closely related to the Intermediate value Theorem.}. Note that the proof of the new theorem still is using the method of successive bisection. 

\begin{theorem}[The Approximate Intermediate Value Theorem]\label{T:appivt}$\;$\\ \indent Let $f$ be a  continuous function from $[0,1]$ to $\mathcal{R}$.

Then $\forall p\forall y \in \mathcal{R}[f(0)\le y \le f(1)\rightarrow\exists x \in [0,1][ y -\frac{1}{2^p}<f(x) < y+\frac{1}{2^p}]$.\end{theorem}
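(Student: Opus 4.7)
The plan is to imitate the classical bisection argument, but to replace the undecidable comparison ``$f(m) \le y$ or $y < f(m)$'' by a decidable but \emph{overlapping} dichotomy obtained from the co-transitivity of $<_\mathcal{R}$. Fix $p$ and $y$ with $f(0)\le y\le f(1)$, and set $\varepsilon := \frac{1}{2^{p+1}}$. Since $y-\varepsilon <_\mathcal{R} y+\varepsilon$, co-transitivity ensures that for every real $z$ we can decide \emph{at least one} of the two (possibly both true) alternatives
\[
z <_\mathcal{R} y+\varepsilon \quad\text{or}\quad y-\varepsilon <_\mathcal{R} z.
\]
This is exactly the constructive replacement for the sharp test ``$z\le y$ vs.\ $y<z$'' that blocked the classical proof.

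I would now build, by recursion on $n$, a shrinking sequence $x(n) = \bigl(x'(n),x''(n)\bigr)$ with $x''(n)-x'(n) = 2^{-n}$, maintaining the invariant
\[
f\bigl(x'(n)\bigr) <_\mathcal{R} y+\varepsilon \quad\text{and}\quad y-\varepsilon <_\mathcal{R} f\bigl(x''(n)\bigr).
\]
The base case $x(0)=(0,1)$ holds because $f(0)\le y$ and $y\le f(1)$, together with co-transitivity applied to $y<y+\varepsilon$ and $y-\varepsilon<y$, yield $f(0)<y+\varepsilon$ and $y-\varepsilon<f(1)$. For the step, compute the midpoint $m := \tfrac{1}{2}\bigl(x'(n)+x''(n)\bigr)$, apply the displayed dichotomy to $z := f(m)$, and set $x(n+1) := \bigl(m, x''(n)\bigr)$ in the first case and $x(n+1) := \bigl(x'(n), m\bigr)$ in the second. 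Either choice preserves the invariant. The sequence $x(n)$ is shrinking and dwindling, hence defines a real $x \in [0,1]$ with $|x-x'(n)|\le 2^{-n}$ and $|x-x''(n)|\le 2^{-n}$.

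To finish, I invoke continuity of $f$ at $x$. Suppose, for contradiction, that $y+\varepsilon <_\mathcal{R} f(x)$; choose $r$ so large that $|z-x|<2^{-r}$ implies $y+\varepsilon < f(z)$, and apply this to $z := x'(r+1)$, contradicting $f\bigl(x'(r+1)\bigr)<y+\varepsilon$. Hence $f(x)\le_\mathcal{R} y+\varepsilon$; symmetrically, $y-\varepsilon\le_\mathcal{R} f(x)$. Now use co-transitivity one last time: from $y+\varepsilon <_\mathcal{R} y+\tfrac{1}{2^p}$ and $f(x)\le y+\varepsilon$, we get either $f(x)<y+\tfrac{1}{2^p}$ or $y+\varepsilon<f(x)$, and the latter is excluded; likewise on the other side. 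This delivers the strict bounds $y-\tfrac{1}{2^p} <_\mathcal{R} f(x) <_\mathcal{R} y+\tfrac{1}{2^p}$ demanded by the theorem.

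The main obstacle, as always in constructive analysis, is not the algebra but the legitimacy of the branching: one must resist the temptation to decide $f(m) \le y$ versus $y<f(m)$, and instead organise the induction around an invariant that \emph{tolerates overlap}. Once one accepts that both halves of the dichotomy may simultaneously hold, the co-transitivity of $<_\mathcal{R}$ makes the whole bisection effective, and the slack $\varepsilon=\tfrac{1}{2^{p+1}}$ is precisely what buys the strict inequalities at the end via one final application of co-transitivity.
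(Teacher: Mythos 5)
Your proposal is correct and follows essentially the same bisection argument as the paper's proof: the paper makes the overlapping decision ``$f(m)<y+\tfrac{1}{2^{p+1}}$ or $y<f(m)+\tfrac{1}{2^{p+1}}$'' by comparing rational approximants of $f(m)$ and $y$ directly, whereas you invoke the already-established co-transitivity of $<_\mathcal{R}$, which encapsulates precisely that comparison. The only other (cosmetic) difference is the finish: the paper derives the strict bounds $y-\tfrac{1}{2^p}<f(x)<y+\tfrac{1}{2^p}$ directly from continuity and the invariant, while you first establish the weak inequalities $y-\varepsilon\le f(x)\le y+\varepsilon$ by reductio and then apply co-transitivity once more.
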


\begin{proof}Let $p$ be given and 
 let $y$ be given such that $f(0)\le y\le f(1)$.
 
Define   $x$ in $[0,1]$, step by step, as follows.

Define $x(0) := (0,1)$ and 
 note: $f\bigl(x'(0)\bigr)-\frac{1}{2^{p+1}}<y<f\bigl(x''(0)\bigr)+\frac{1}{2^{p+1}}$. 
 
\smallskip  Now let $n$ be given such that  $x(n)$ has been defined already.

 Consider $m:=\frac{x'(n)+x''(n)}{2}$ and $z:=f(m)$. 
 
 Find $l$:= the least $k$ such that  both $z''(k)-z'(k) <\frac{1}{2^{p+1}}$ and \\
 $y''(k)-y'(k)<\frac{1}{2^{p+1}}$
 and define $s:=z(l)$. 
 
  \smallskip 1. 
 If $s''< y'(l) +\frac{1}{2^{p+1}}$, define $x(n+1) =(m, x''(n))$. \\Then:  $f(m)\le s''<y'(l) +\frac{1}{2^{p+1}}\le y+\frac{1}{2^{p+1}}$ and: $f(m) -\frac{1}{2^{p+1}}< y$. 
 
\smallskip 2. If not $s'' < y'(l)+\frac{1}{2^{p+1}}$, define $x(n+1)= (x'(n), m)$. \\Then: $y''(l)<y'(l)+\frac{1}{2^{p+1}}\le s''<s'+\frac{1}{2^{p+1}}$ and: $y<f(m)+\frac{1}{2^{p+1}}$.

 \smallskip
 This completes the definition of $x$.
 
 \medskip
 $x$ is a well-defined real number, and, \\
for every $n$, $x''(n)-x'(n)=\frac{1}{2^n}$ and $f\bigl(x'(n)\bigr) -\frac{1}{2^{p+1}}<y<f\bigl(x''(n)\bigr)+\frac{1}{2^{p+1}}$.

Using the fact that  $f$ is continuous at $x$,
find $r$ such that \\
$\forall z\in[0,1][x-\frac{1}{2^r}<z<x+\frac{1}{2^r}\rightarrow f(x) -  \frac{1}{2^{p+1}}<f(z)<f(x)+\frac{1}{2^{p+1}}]$.

 \smallskip  
1. Note:  $x-\frac{1}{2^r}< x'(r+1) \le x$, and: \\$f(x)< f(x'(r+1)) +\frac{1}{2^{p+1}}< y+\frac{1}{2^{p+1}} + \frac{1}{2^{p+1}} = y +\frac{1}{2^p}$.  
  
\smallskip 
 2. Note: $ x\le x''(r+1)< x+\frac{1}{2^r}$, and:\\ $y<f\bigl(x''(r+1)\bigr)+\frac{1}{2^{p+1}}< f(x) +\frac{1}{2^{p+1}} + \frac{1}{2^{p+1}}= f(x)+\frac{1}{2^p}$.

  Conclude: $y -\frac{1}{2^p}<f(x)<y+\frac{1}{2^p}$.\end{proof}
  
\subsection{Locally non-constant functions} One may also try to restrict the class of functions to which the Intermediate Value Theorem applies. Observe that the two functions mentioned as counterexamples in Subsection \ref{SS:counterivt} have the property that, over a whole interval, one does not know if they change their value. Let us try to forbid such behaviour. We require  constructive evidence that, for every $y$, for every interval $(p,q)$, the function  does not have, on the interval $(p,q)$, the constant value $y$. 

\begin{definition}A function $f$  from $[0,1]$ to $\mathcal{R}$ is \emph{locally non-constant} if and only if \\$\forall y \in \mathcal{R}\forall p \in \mathbb{Q}\forall q \in \mathbb{Q}[0<p<q<1 \rightarrow \exists x \in [0,1][p<x<q \;\wedge\; f(x) \;\#_\mathcal{R}\; y]]$. \end{definition}
 The following Lemma says that, if, for a given intermediate value $y$,  the function   $f$ assumes, in any given interval,  a value positively different from $y$, then there is  a point where $f$ assumes the value $y$. 
\begin{lemma}\label{L:ivtlnc}Let $f$ be a  continuous function from $[0,1]$ to $\mathcal{R}$.

Let $y$ be given such that $f(0)\le y \le f(1)$ and 

$\forall p \in \mathbb{Q}\forall q \in \mathbb{Q}[0<p<q<1 \rightarrow \exists x \in [0,1][p<x<q \;\wedge\; f(x) \;\#_\mathcal{R}\; y]]$.

Then $\exists x \in [0,1][f(x) = y]$. \end{lemma}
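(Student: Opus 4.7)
The plan is to adapt the standard bisection argument so that the undecidable comparison ``$f(m)\le_\mathcal{R} y$ versus $y<_\mathcal{R} f(m)$'' at the midpoint is avoided altogether. The locally non-constant hypothesis lets me, at each stage, replace the midpoint by a nearby point where $f$ is positively apart from $y$, and for such a point the sign of $f-y$ is effectively known by the definition of $\#_\mathcal{R}$.

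Concretely, I would construct by recursion rationals $0\le a_n<b_n\le 1$ with $b_n-a_n\le\bigl(\frac{2}{3}\bigr)^n$ and $f(a_n)\le_\mathcal{R} y\le_\mathcal{R} f(b_n)$, starting from $a_0=0$, $b_0=1$. Given $a_n,b_n$, set $c_n:=a_n+\frac{1}{3}(b_n-a_n)$ and $d_n:=a_n+\frac{2}{3}(b_n-a_n)$; both are rationals strictly between $0$ and $1$. By the locally non-constant hypothesis there is a real $x_n\in[0,1]$ with $c_n<x_n<d_n$ and $f(x_n)\;\#_\mathcal{R}\;y$, so either $f(x_n)<_\mathcal{R} y$ or $y<_\mathcal{R} f(x_n)$. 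In the first case I use continuity of $f$ at $x_n$ to choose a \emph{rational} $a_{n+1}\in(c_n,d_n)$ with $f(a_{n+1})<_\mathcal{R} y$ and set $b_{n+1}:=b_n$; in the second case I choose a rational $b_{n+1}\in(c_n,d_n)$ with $y<_\mathcal{R} f(b_{n+1})$ and set $a_{n+1}:=a_n$. Either way the bracketing $f(a_{n+1})\le_\mathcal{R} y\le_\mathcal{R} f(b_{n+1})$ persists and the new interval has length at most $\frac{2}{3}(b_n-a_n)$.

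Both sequences are monotone and Cauchy, converging to a common $x\in[0,1]$ with $a_n\le x\le b_n$ for every $n$. I then claim $f(x)=_\mathcal{R} y$, which I prove by ruling out both directions of apartness. To exclude $y<_\mathcal{R} f(x)$: choose a rational $\varepsilon>0$ with $y+\varepsilon<_\mathcal{R} f(x)$, apply continuity of $f$ at $x$ to obtain $\delta>0$ such that $|z-x|<\delta$ implies $|f(z)-f(x)|<\frac{\varepsilon}{2}$, and pick $n$ with $b_n-a_n<\delta$; then $|a_n-x|<\delta$ forces $f(a_n)>f(x)-\frac{\varepsilon}{2}>y+\frac{\varepsilon}{2}$, contradicting $f(a_n)\le_\mathcal{R} y$. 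The symmetric argument on the $b_n$ side rules out $f(x)<_\mathcal{R} y$. Hence $\neg\bigl(f(x)\;\#_\mathcal{R}\;y\bigr)$, i.e.\ $f(x)=_\mathcal{R} y$.

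The delicate point I expect is not the passage to the limit but the inductive step: the locally non-constant hypothesis is stated only for rational $p<q$, so the new endpoint produced at each stage must itself be rational. Converting the real witness $x_n$ and the apartness $f(x_n)\;\#_\mathcal{R}\;y$ into an apartness at a \emph{rational} close to $x_n$ is where one really uses continuity of $f$ inside the recursion; without this small piece of bookkeeping, the induction would not even get off the ground.
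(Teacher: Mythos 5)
Your proposal is correct and matches the paper's proof essentially step for step: trisect, use the hypothesis together with continuity to obtain a rational in the middle third at which $f$ is positively apart from $y$, maintain the bracketing $f(a_n)\le_\mathcal{R} y\le_\mathcal{R} f(b_n)$, and exclude both directions of apartness at the limit via continuity. The one point you flag as delicate, namely converting the real witness supplied by the hypothesis into a rational endpoint by an appeal to continuity, is exactly what the paper does too, only compressed into a single phrase.
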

\begin{proof} Let $y$ be given such that $f(0)\le y \le f(1)$.

Define  $x$ in $[0,1]$, as follows, step by step.

Define $x(0) := (0,1)$.

\smallskip 
Now let $n$ be given such that  $s:=x(n)$ has been defined.

 Using the continuity of $f$, find    a rational number $q$ such that
  \\$\frac{2}{3}s'+_\mathbb{Q}\frac{1}{3}s''<q<\frac{1}{3}s'+_\mathbb{Q}\frac{2}{3}s''$ and
  $f(q)\;\#_\mathcal{R}\;y$. 
  
 If $f(q)<_\mathcal{R} y$, define $x(n+1):= \bigl(x'(n), q\bigr)$, and,\\
if $y<_\mathcal{R} f(q)$, define $x(n+1):= \bigl(q,x''(n)\bigr)$.

This completes the definition of $x$.

\smallskip
$x$ is a well-defined real number, as,  for each $n$, \\ 
$x'(n)\le x'(n+1)\le x''(n+1)\le x''(n)$  and $x''(n+1) -x'(n+1) \le \frac{2}{3}\bigl(x''(n) - x'(n)\bigr)$.

Moreover, for each $n$,  $f\bigl(x'(n)\bigr) \le_\mathcal{R} y \le_\mathcal{R} f\bigl(x''(n)\bigr)$.

\smallskip

Assume: $f(x) <_\mathcal{R} y$.

Find $p$ such that  $\frac{1}{2^p}< y-_\mathcal{R} f(x)$, and 
find $q$ such that \\$\forall z \in [0,1][|z-x| < \frac{1}{2^q} \rightarrow |f(z) - f(x)|<_\mathcal{R} \frac{1}{2^p}]$. \\
Then   $\forall z \in [0,1][|z-x| < \frac{1}{2^q} \rightarrow f(z) <_\mathcal{R} y$. Find $r$ such that $(\frac{2}{3})^r<\frac{1}{2})^q$.  
\\Conclude: $f(x''(r)) <_\mathcal{R} y$. Contradiction.

We thus see: $\neg(f(x) <_\mathcal{R} y)$, that is: $f(x) \ge_\mathcal{R} y$.  

\smallskip For similar reasons: $f(x) \le_\mathcal{R} y$.  Conclude: $f(x) =_\mathcal{R} y$. \end{proof}

\begin{theorem}[The Intermediate Value Theorem for locally non-constant functions]\label{T:ivtlnc}$\;$ 
\indent Let $f$ be a continuous and locally non-constant function from $[0,1]$ to $\mathcal{R}$. 
Then $\forall y[f(0)\le y \le f(1)\rightarrow \exists x\in[0,1][f(x)=y]]$. \end{theorem}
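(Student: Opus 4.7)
The plan is to observe that this theorem reduces essentially immediately to Lemma~\ref{L:ivtlnc}. The local non-constancy hypothesis is a statement universal in the value $y$, whereas the lemma demands the corresponding property only for a particular $y$; so the theorem follows by specialising the hypothesis to each admissible intermediate value and then invoking the lemma.

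In detail, first I would fix an arbitrary $y \in \mathcal{R}$ satisfying $f(0)\le y \le f(1)$. The assumption that $f$ is locally non-constant reads
\[\forall y' \in \mathcal{R}\,\forall p \in \mathbb{Q}\,\forall q \in \mathbb{Q}\bigl[0<p<q<1 \rightarrow \exists x \in [0,1][p<x<q \;\wedge\; f(x) \;\#_\mathcal{R}\; y']\bigr].\]
Instantiating $y'$ to the fixed $y$ yields precisely the hypothesis
\[\forall p \in \mathbb{Q}\,\forall q \in \mathbb{Q}\bigl[0<p<q<1 \rightarrow \exists x \in [0,1][p<x<q \;\wedge\; f(x) \;\#_\mathcal{R}\; y]\bigr]\]
that Lemma~\ref{L:ivtlnc} requires. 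Together with the continuity of $f$ and the given inequalities $f(0)\le y \le f(1)$, these premises allow me to apply the lemma and produce $x \in [0,1]$ with $f(x)=_\mathcal{R} y$.

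There is no real obstacle here: all the substantive constructive work has already been carried out inside the proof of Lemma~\ref{L:ivtlnc}, where the naive bisection test \emph{``is $f(m)\le y$ or $y<f(m)$?''} at the geometric midpoint $m$ is replaced by the constructively legitimate test \emph{``is $f(q)<_\mathcal{R} y$ or $y<_\mathcal{R} f(q)$?''} at a nearby rational $q$ whose existence is guaranteed by local non-constancy together with continuity, and whose decidability is secured by the co-transitivity of $<_\mathcal{R}$. The present theorem is thus merely the observation that this mechanism works uniformly for every admissible intermediate value $y$.
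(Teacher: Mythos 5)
Your proposal is correct and is exactly what the paper does: the paper's proof of Theorem~\ref{T:ivtlnc} reads simply ``Use Lemma~\ref{L:ivtlnc}.'', and your observation that local non-constancy is a universally quantified version of the lemma's hypothesis, so one merely instantiates at the given $y$ and applies the lemma, is the intended argument.
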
 
\begin{proof} Use Lemma \ref{L:ivtlnc}. \end{proof}
   Many real functions may be proven to be locally non-constant and Theorem \ref{T:ivtlnc} is widely applicable\footnote{Theorems \ref{T:appivt} and \ref{T:ivtlnc} may be found in \cite[Theorem 2.5]{bridgesrichman}.}. 
   
   \smallskip From a constructive point of view, the next result,  Theorem \ref{T:ivtneg} is not a nice result. The result nevertheless is instructive. The intuitionistic mathematician may say: `{\it Yes, this explains the classical mathematician's belief that the Intermediate Value Theorem is true.  He means no more than that it is impossible to have  constructive evidence, for every point $x$ in the domain of the function, that the value assumed at $x$ is apart from the given intermediate value. Unfortunately, even if this impossible, it may also be impossible  to find a point where the intermediate value is assumed.}'

\begin{theorem}[The Negative Intermediate Value Theorem]\label{T:ivtneg}$\;$ \\ \indent Let $f$ be a continuous function from $[0,1]$ to $\mathcal{R}$.

Then $\forall y[f(0)\le y\le f(1)\rightarrow \neg\forall x \in [0,1][f(x) \;\#_\mathcal{R}\; y]]$. \end{theorem}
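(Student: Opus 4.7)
The plan is to assume for contradiction the universal statement $\forall x\in[0,1][f(x)\;\#_\mathcal{R}\;y]$ and, under this extra assumption, resurrect the bisection construction of Subsection 9.2 that was blocked by the undecidability of the case distinction at the midpoint. The hypothesis $f(m)\;\#_\mathcal{R}\;y$ for the midpoint $m$ is, by definition of $\#_\mathcal{R}$, precisely the disjunction $f(m)<_\mathcal{R} y\;\vee\;y<_\mathcal{R} f(m)$, which is exactly the piece of information that was missing. So, assuming $\forall x \in [0,1][f(x)\;\#_\mathcal{R}\;y]$, the original bisection really can be carried out.

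Concretely, I would first observe that $f(0)\le_\mathcal{R} y\le_\mathcal{R} f(1)$ together with $f(0)\;\#_\mathcal{R}\;y$ and $f(1)\;\#_\mathcal{R}\;y$ force $f(0)<_\mathcal{R} y<_\mathcal{R} f(1)$, and then construct $x=x(0),x(1),\ldots$ step by step: put $x(0):=(0,1)$; given $x(n)$, let $m:=\tfrac{x'(n)+x''(n)}{2}$, invoke the assumption at $m\in[0,1]$ to get either $f(m)<_\mathcal{R} y$ (put $x(n+1):=(m,x''(n))$) or $y<_\mathcal{R} f(m)$ (put $x(n+1):=(x'(n),m)$). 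A straightforward induction shows that $x$ is a genuine element of $[0,1]$, that $x''(n)-x'(n)=\tfrac{1}{2^n}$, and, crucially, that the invariant $f\bigl(x'(n)\bigr)\le_\mathcal{R} y\le_\mathcal{R} f\bigl(x''(n)\bigr)$ is preserved throughout.

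Next, I would reproduce the continuity argument from Subsection 9.2 verbatim to conclude $f(x)=_\mathcal{R} y$: assuming $y<_\mathcal{R} f(x)$, continuity of $f$ at $x$ yields some $r$ with $\forall z\in[0,1][|z-x|<\tfrac{1}{2^r}\rightarrow y<_\mathcal{R} f(z)]$; since $|x-x'(r+1)|<\tfrac{1}{2^r}$, this gives $y<_\mathcal{R} f\bigl(x'(r+1)\bigr)$, contradicting the invariant, so $f(x)\le_\mathcal{R} y$; symmetrically $y\le_\mathcal{R} f(x)$; hence $f(x)=_\mathcal{R} y$, i.e.\ $\neg\bigl(f(x)\;\#_\mathcal{R}\;y\bigr)$. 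But by the standing assumption applied to this very $x\in[0,1]$ we have $f(x)\;\#_\mathcal{R}\;y$, which is the desired contradiction.

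There is no real obstacle here, which is the whole point of the statement: the classical IVT proof is constructively valid \emph{as soon as} one has a uniform way to decide the bisection step, and the hypothesis $\forall x\in[0,1][f(x)\;\#_\mathcal{R}\;y]$ supplies exactly that decision. The only thing one must be careful about is not to smuggle in any stronger conclusion: one reaches $\bot$ from the assumption, giving $\neg\forall x\in[0,1][f(x)\;\#_\mathcal{R}\;y]$, but the double negation cannot be discharged to produce an actual $x$ with $f(x)=_\mathcal{R} y$, in conformity with the reckless counterexamples of Subsection 9.4.
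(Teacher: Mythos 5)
Your proof is correct and rests on the same core observation as the paper's: under the standing assumption $\forall x \in [0,1][f(x)\;\#_\mathcal{R}\;y]$, the bisection is unblocked, continuity then yields a point $x$ with $f(x)=_\mathcal{R}y$, and this contradicts the assumption. The paper discharges the theorem in one line by appealing to Lemma~\ref{L:ivtlnc}, whose hypothesis is trivially implied by the universal apartness assumption; you instead inline the bisection, and, since the apartness hypothesis holds for every real in $[0,1]$ (not merely on a dense set), you can bisect at the exact midpoint rather than via the nearby-rational-with-apartness step in the lemma's proof. The mathematical content is the same, and your careful closing remark that only $\neg\forall$ and not an explicit $x$ with $f(x)=y$ is obtained is exactly the right thing to flag.
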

\begin{proof} Use Lemma \ref{L:ivtlnc}. \end{proof}
\subsection{At most countably many exceptions}
One may also observe that the Intermediate Value Theorem goes through for `most' intermediate values. 
\begin{theorem}[The Intermediate Value Theorem holds with at most countably many exceptions\footnote{In \cite{bishopbridges}, one finds various theorems `\textit{with countably many exceptions}', for instance, Theorem 4.9: {\it if $f; [0,1]\rightarrow \mathcal{R}$ is uniformly continuous, then for all but countably many $y\ge \inf(f)$ the set\\ $\{x \in [0,1]\mid f(x) \le y\}$ is \emph{(constructively)} compact.}}]\label{T:ivtexceptions}Let $f$ be a continuous function from $[0,1]$ to $\mathcal{R}$. 

There exists an infinite sequence $y_0, y_1, \ldots$ of elements of $[0,1]$ such that 

$\forall y [\bigl(f(0)\le y\le f(1) \;\wedge\;\forall n[ y \;\#_\mathcal{R}\; y_n]\bigr) \rightarrow \exists x \in [0,1][f(x) = y]].$ \end{theorem}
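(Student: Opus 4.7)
The plan is to exploit Lemma \ref{L:ivtlnc} directly: if, for a given intermediate value $y$, we can witness in every rational subinterval of $(0,1)$ a point where $f$ is apart from $y$, then $f$ attains $y$. So to produce the required countable ``exception sequence'', I want to list enough values of $f$ at a dense set of points so that being apart from all listed values forces the local non-constancy hypothesis with respect to $y$.

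Concretely, I would enumerate the (countable) set of pairs of rationals $(p,q)$ with $0 <_\mathbb{Q} p <_\mathbb{Q} q <_\mathbb{Q} 1$ as $(p_0,q_0), (p_1,q_1), \ldots$, and for each $n$ define the midpoint $m_n := \tfrac{1}{2}(p_n +_\mathbb{Q} q_n)$ and set $y_n := f(m_n)$. This gives an explicit infinite sequence $y_0, y_1, \ldots$ of real numbers (the statement says ``elements of $[0,1]$'', which I read as a small typo for elements of $\mathcal{R}$; alternatively one may clip by a continuous truncation to $[0,1]$ without affecting the argument).

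Now suppose $y$ satisfies $f(0) \le_\mathcal{R} y \le_\mathcal{R} f(1)$ and $\forall n[\,y \;\#_\mathcal{R}\; y_n\,]$. For any rationals $p,q$ with $0<p<q<1$, pick the index $n$ with $(p_n,q_n) = (p,q)$; then $m_n \in (p,q) \subseteq [0,1]$ and $f(m_n) = y_n \;\#_\mathcal{R}\; y$. Hence the hypothesis
\[
\forall p,q \in \mathbb{Q}\bigl[0<p<q<1 \rightarrow \exists x \in [0,1][\,p<x<q \;\wedge\; f(x)\;\#_\mathcal{R}\;y\,]\bigr]
\]
of Lemma \ref{L:ivtlnc} is fulfilled, and the lemma delivers some $x \in [0,1]$ with $f(x) =_\mathcal{R} y$, as required.

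I do not anticipate any serious obstacle: the main work, namely handling the case distinction on which side of $y$ the value $f$ attains, is absorbed into Lemma \ref{L:ivtlnc}. The only thing to take care of is the countable enumeration of rational pairs (standard) and the observation that, for every such pair, the midpoint is a fixed rational, so $y_n = f(m_n)$ is a genuinely constructed real number independent of $y$. The clause ``at most countably many exceptions'' is exactly encoded by the single universal sequence $(y_n)_n$ produced from $f$ alone.
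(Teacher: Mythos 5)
Your proof is correct and takes essentially the same approach as the paper's: both set $y_n := f(\cdot)$ at a dense countable set of rationals in the unit interval, after which the conclusion follows for any $y$ apart from every $y_n$. The paper simply enumerates the rationals $q_n$ in $[0,1]$, sets $y_n := f(q_n)$, and then reruns the bisection argument (deciding $f(m)<y$ or $y<f(m)$ at each rational midpoint $m$ since $f(m)$ is one of the $y_n$), while you route through Lemma~\ref{L:ivtlnc} directly, which is a harmless streamlining; you are also right that ``elements of $[0,1]$'' in the statement is a slip for ``elements of $\mathcal{R}$''.
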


\begin{proof} Let $q_0, q_1, q_2 \ldots$ be an enumeration of all rational numbers $q$ in $[0,1]$. For each $n$,  define $y_n:=f(q_n)$. 

Let $y$ in $[0,1]$ be given such that $\forall n[ y \;\#_\mathcal{R} \;y_n]$. 

Define  $x$ in $[0,1]$, step by step.

Define $x(0) := (0,1)$ and note: $f(0)  <_\mathcal{R} y<_\mathcal{R} f(1) $.

Now let $n$ be given such that   $x(n)$ is defined and $f\bigl(x'(n)\bigr) <_\mathcal{R} y <_\mathcal{R} f\bigl(x''(n)\bigr)$.

 Consider $m:=\frac{x'(n)+x''(n)}{2}$.
  If $f(m) <_\mathcal{R} y$,  define: $x(n+1):= (m, x''(n))$, and,
  if $ y<_\mathcal{R} f(m)$, define: $x(n+1):= (x'(n), m)$.  
   Then \\$f\bigl(x'(n+1)\bigr) <_\mathcal{R} y <_\mathcal{R} f(\bigl(x''(n+1)\bigr)$.

This completes the definition of $x$.

\smallskip
Using the argument given in the last part of the proof of Lemma \ref{L:ivtlnc}, one proves: $f(x) =_\mathcal{R} y$.
\end{proof}
\subsection{Perhaps, perhaps, perhaps,$\ldots$}\label{SS:mmp} $\;$

Suppose we change the conclusion of the Intermediate Value Theorem into \begin{center} $(\ast)$ $\exists x_0[f(x_0)\;\#_\mathcal{R} \;y \rightarrow \exists x_1[f(x_1) =_\mathcal{R} y]]$,\\{\it Perhaps\footnote{A similar use of the expression `\textit{Perhaps}' is made in Section \ref{S:finiteness}.}, \it $f$ assumes the value $y$.}\end{center}  One may think of $(\ast)$ as follows. You are offered $x_0$ where $f$ should assume the value $y$. If you discover $x_0$ clearly  does not work, you may go back to the shop and ask for a better number and will be offered $x_1$ and $x_1$ certainly will work. 

One may reconsider the two counterexamples given in Subsection \ref{SS:counterivt}, the functions $f_0$ and $f_1$, in the light of this definition.

\smallskip
We have seen that the statement `\textit{$f_0$ assumes the value $\frac{1}{2}$}' is reckless, but, if $f_0(\frac{1}{2}) \;\#_\mathcal{R}\; \frac{1}{2}$, then  $\rho_1\;\#_\mathcal{R}\; 0$ and: {\it either}  $0 <_\mathcal{R} \rho_1$ and $f_0(\frac{1}{3}\cdot\frac{1}{1+2\cdot\rho_1})=\frac{1}{2}$,  {\it or} $\rho_1<_\mathcal{R} 0$ and $f_0(\frac{2}{3}-\frac{2\cdot\rho_1}{1-2\cdot\rho_1})=\frac{1}{2}$. We thus see: {\it perhaps, $f_0$ assumes the value $\frac{1}{2}$.}

\smallskip We also have seen that the statement `\textit{$f_1$ assumes the value $\frac{1}{2}\cdot\rho_2$}' is reckless, but, if $f_1(0)\;\#_\mathcal{R} \;\frac{1}{2}\cdot\rho_2$, then $\rho_2\;\#_\mathcal{R}\;0_\mathcal{R}$ and $\rho_0=_\mathcal{R}\rho_1$ and $f_1(\frac{1}{2})=_\mathcal{R}\frac{1}{2}\cdot \rho_2$. We thus see: {\it perhaps, $f_1$ assumes the value $\frac{1}{2}\cdot \rho_2$}.

\smallskip The funny thing is that the operation {\it Perhaps} may be repeated. One may consider the statement:

 \begin{center} $\exists x_0[f(x_0)\;\#_\mathcal{R}\; y \rightarrow \exists x_1[f(x_1) \#_\mathcal{R}\;y \rightarrow \exists x_2[f(x_2)=_\mathcal{R}y]]]$,\\{\it Perhaps, perhaps, $f$ assumes the value $y$}. \end{center}

   We give an example showing that this statement indeed may be weaker than the previous one, with only one `perhaps'.
 
\smallskip \textit{Another  example}.

Consider  
 $\rho_1=(-\frac{1}{2})^{k_{99}}$ but also $\rho_3:= (-\frac{1}{2})^{k_{88}}$. \\
 Define a function $f_2$ from $[0,1]$ to $\mathcal{R}$ such that \begin{enumerate}[\upshape (i)] \item $f_2(0) = 0$ and $f_2(1) = 1$ and $f_2(\frac{1}{5})=f_2(\frac{2}{5})= \frac{1}{2}+\rho_1$, and\\ $f_2(\frac{3}{5})=f_2(\frac{4}{5})= \frac{1}{2}+\rho_3$, and, \item for each $i\le 4$, $f_2$ is linear on the interval $[\frac{i}{5},\frac{i+1}{5}]$. \end{enumerate} 
 
 Let us first observe: if $f_2(\frac{1}{5})\;\#_\mathcal{R}\; \frac{1}{2}$ and also $f_2(\frac{3}{5})\;\#_\mathcal{R}\; \frac{1}{2}$, then $\rho_1\;\#_\mathcal{R}\; 0$ and $\rho_3  \;\#_\mathcal{R} \;0$ and $\exists x[f(x)=_\mathcal{R} \frac{1}{2}]$. Conclude: \begin{quote} {\it Perhaps, perhaps, $f_2$ assumes the value $\frac{1}{2}$}. \end{quote}
 
 Now assume: {\it Perhaps, $f_2$ assumes the value $\frac{1}{2}$}. \\Find $x_0$ such that, if  $f_2(x_0)\;\#_\mathcal{R}\;\frac{1}{2}$, then $\exists x_1[f_2(x_1) =_\mathcal{R} \frac{1}{2}]$. 
 
 Distinguish two cases.
 
 \smallskip {\it Case (1)}. $x_0 <_\mathcal{R} \frac{8}{15}$. Assume: $\rho_1<_\mathcal{R} 0$. \\Define $x_1:=\sup(x_0, \frac{2}{5})$ and note $f(x_1) = f(\frac{2}{5})  + (x_1-\frac{2}{5})\cdot 5(\rho_3-\rho_1)=\\\frac{1}{2} +\rho_1 + (x_1-\frac{2}{5})\cdot 5(\rho_3-\rho_1)\le_\mathcal{R} \frac{1}{2} + \rho_1 + (\frac{8}{15}-\frac{2}{5})\cdot 5(\rho_3-\rho_1)=\\\frac{1}{2} +\frac{1}{3}\rho_1 + \frac{2}{3}\rho_3$. \\Conclude: $f(x_0)\le_\mathcal{R} f(x_1) <_\mathcal{R} \frac{1}{2}$ or $0<_\mathcal{R} \rho_3$. 
 
 In both cases, one may conclude: \\$\exists x \in [0,1][f_2(x)=_\mathcal{R} \frac{1}{2}]$ and: $0\le_\mathcal{R}\rho_3\;\vee\; \rho_3\le_\mathcal{R} 0$.  
 
 We thus see: if $\rho_1<_\mathcal{R} 0$, then $0\le_\mathcal{R}\rho_3\;\vee\; \rho_3\le_\mathcal{R} 0$, i.e. \\if $\exists n[2n+1=k_{99}]$, then $\forall n[2n+1\neq k_{88}]\;\vee\;  \forall n[2n\neq k_{88}]$, \\a {\it reckless} or {\it hardy} conclusion. 
 
 \smallskip  {\it Case (2)}. $\frac{7}{15}<_\mathcal{R} x_0 $. By a similar argumnet, we obtain the result:  \\ if $0<_\mathcal{R} \rho_3$, then $0\le_\mathcal{R}\rho_1\;\vee\; \rho_1\le_\mathcal{R} 0$, i.e. \\if $\exists n[2n=k_{88}]$, then $\forall n[2n+1\neq k_{99}]\;\vee\;  \forall n[2n\neq k_{99}]$, \\a {\it reckless} or {\it hardy} conclusion.  
 
 \smallskip We thus see that the statement \begin{quote} `{\it Perhaps, $f_2$ assumes the value $\frac{1}{2}$}' \end{quote} has reckless consequences. 
 
 \medskip One now may define: \begin{center} {\it $Perhaps_0(f,y)$  if and only if $\exists x[f(x)=y]$} \end{center} and, for each $n$, \begin{center} {\it $Perhaps_{n+1}(f, y)$ if and only if $\exists x[f(x)\;\#_\mathcal{R}\;y \rightarrow Perhaps_n( f, y)] $ }\end{center} and also\begin{center} {\it $Perhaps_\omega(f, y)$ if and only if $\exists n[Perhaps_n(f, y)]$},  \end{center}and \begin{center}{\it $Perhaps_{\omega+1}(f, y)$ if and only if $\exists x[f(x)\;\#_\mathcal{R}\;y\rightarrow Perhaps_\omega(f, y)]$}. \end{center} and prove that in general, alle these statements have different meanings. 
  Going this path further, increasing the number of `{\it perhapses}'  further into the transfinite, one  finds   uncountably many intuitionistically different versions of the notion: `$\exists x[f(x)=_\mathcal{R}y]$', see \cite{veldman05}.  
\section{A  function from $\mathcal{R}$ to $\mathcal{R}$ is nowhere positively discontinuous}
 
   In the next three Sections we study the intuitionistic claim that every real function is continuous.  The first theorem, Theorem \ref{T:negativecontinuity}, is weak and negative.

\subsection{No positive discontinuity}

 \begin{definition}\label{D:continuity}Let $f$ be a function from $\mathcal{X}\subseteq\mathcal{R}$ to $\mathcal{R}$ and 
let   $x$ in $\mathcal{X}$ be given.

  \smallskip $f$ is \emph{positively discontinuous} at $x$ if and only if \\$\exists p\forall m\exists y\in \mathcal{X}[|y-x| < \frac{1}{2^m} \;\wedge\; | f(y)-f(x)|\ge\frac{1}{2^p}].$

  \end{definition}

 \begin{theorem}\footnote{Cf. \cite[Theorem 1]{brouwer75}.}\label{T:negativecontinuity}Let $f$ be a  function from $\mathcal{X}\subseteq\mathcal{R}$ to $\mathcal{R}$. Assume there exists $x$ in $\mathcal{X}$ such that $f$ is positively discontinuous at $x$. Then there exists $z$ in $\mathcal{R}$ such that we can \emph{not} calculate $f(z)$, so $z$ can \emph{not} belong to $\mathcal{X}$.  \end{theorem}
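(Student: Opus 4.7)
The plan is to build, from the positive discontinuity witness at $x$, a real $z$ whose very identity is tied to an undecided proposition of the kind introduced in Section \ref{S:osc}. Let $p$ be a witness to the positive discontinuity, so that for every $m$ there exists $y \in \mathcal{X}$ with $|y - x| <_\mathcal{R} 1/2^m$ and $|f(y) - f(x)| \ge_\mathcal{R} 1/2^p$. Using countable choice I would pick, for each $m$, a specific $y_m \in \mathcal{X}$ realising this, so that the rational approximations $y_m(n)$ are available as raw material for the construction.

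Next, I would construct $z$ in $\mathcal{R}$ in exactly the spirit of the real $\rho_0$ from Definition \ref{D:osc}, but swinging around $x$ rather than around $0_\mathcal{R}$. At stage $n$ I examine the first $n$ digits of the decimal expansion of $\pi$ to decide whether some $j \le n$ satisfies $j = k_{99}$. If no such $j$ has turned up, I let $z(n)$ be a mildly padded rational interval around $x(n)$. If some $j$ has turned up, with minimal value $n_0$, I take $z(n)$ to be a rational interval approximating $y_{n_0}$, compressed to fit inside $z(n-1)$ and of width at most $1/2^n$. Because $|y_{n_0} - x| <_\mathcal{R} 1/2^{n_0}$, a sufficient initial pad around the $x(n)$ guarantees that the first $y_{n_0}$-interval is contained in the last $x$-padded interval, so the sequence stays shrinking and dwindling. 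The result is a genuine $z \in \mathcal{R}$ with $z =_\mathcal{R} x$ when $\forall n[n < k_{99}]$ and $z =_\mathcal{R} y_{n_0}$ when $k_{99} = n_0$.

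The payoff step is to show that any procedure calculating $f(z)$ would force the reckless disjunction $\exists n[n = k_{99}] \;\vee\; \forall n[n < k_{99}]$. Because $f$ respects $=_\mathcal{R}$, in the first case $f(z) =_\mathcal{R} f(x)$, whereas in the second case $|f(z) - f(x)| \ge_\mathcal{R} 1/2^p$. From the approximations $f(z)(n)$ the real $|f(z) - f(x)|$ is available, and the co-transitivity of $<_\mathcal{R}$ recalled in Section \ref{S:uncount} lets us decide $|f(z) - f(x)| <_\mathcal{R} 1/2^p$ or $1/2^{p+1} <_\mathcal{R} |f(z) - f(x)|$. The first alternative rules out $k_{99} = n_0$ for every $n_0$, so it gives $\forall n[n < k_{99}]$; the second alternative yields $f(z) \;\#_\mathcal{R}\; f(x)$, hence $z \;\#_\mathcal{R}\; x$, hence $\exists n[n = k_{99}]$. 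Since we possess no such decision procedure, $f(z)$ cannot be computed, and, as $f$ is a function defined on all of $\mathcal{X}$, the real $z$ cannot belong to $\mathcal{X}$.

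The hard part, I expect, is not the final dichotomy but the bookkeeping in the construction of $z$: choosing the padding and the rate of compression so that the transition from $x$-approximations to $y_{n_0}$-approximations at stage $n_0$ preserves the shrinking property, independently of when (if ever) the stage $n_0$ arrives. Once $z$ is in hand and the equivalences $z =_\mathcal{R} x$ and $z =_\mathcal{R} y_{n_0}$ are secured, the argument concludes smoothly via the co-transitivity step.
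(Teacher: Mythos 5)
Your construction of $z$ and the final dichotomy track the paper's \emph{second} argument rather closely (the paper also sketches a first argument using a free-choice sequence whose creator reserves the right to ``freeze'' it; you do not engage with that one). But there is a genuine gap in your case analysis. From the second alternative you get $f(z)\;\#_\mathcal{R}\;f(x)$, and you then infer $z\;\#_\mathcal{R}\;x$ and from there $\exists n[n=k_{99}]$. The inference $f(z)\;\#_\mathcal{R}\;f(x)\Rightarrow z\;\#_\mathcal{R}\;x$ is \emph{strong extensionality} of $f$, and nothing in the hypotheses gives it to you. The only extensionality available is the defining property of a function, $z=_\mathcal{R}x\Rightarrow f(z)=_\mathcal{R}f(x)$, and contraposition of that yields merely $\neg(z=_\mathcal{R}x)$, which, for the $z$ you built, is $\neg\forall n[n<k_{99}]$. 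Passing from $\neg\forall n[n<k_{99}]$ to $\exists n[n=k_{99}]$ would be an instance of Markov's Principle, which the paper neither assumes nor endorses.

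The gap is local and repairable, and repairing it is exactly what the paper does: keep the first disjunct as $\forall n[n<k_{99}]$ (valid, since $n<k_{99}$ is decidable so $\neg\exists n[n=k_{99}]$ gives $\forall n[n<k_{99}]$), and weaken the second disjunct to $\neg\forall n[n<k_{99}]$. The decision procedure for $f(z)$ then yields $\neg\forall n[n<k_{99}]\;\vee\;\forall n[n<k_{99}]$, an instance of $\mathbf{WLPO}$ rather than of $\mathbf{LPO}$; this is still a reckless conclusion, so the theorem follows. Indeed, immediately after the proof the paper records precisely this: a positive discontinuity yields $\mathbf{WLPO}$, not $\mathbf{LPO}$. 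So your proof overclaims one disjunct, and you should not advertise the stronger $\exists n[n=k_{99}]$ as the consequence of $f(z)\;\#_\mathcal{R}\;f(x)$.
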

 
 \begin{proof} Let $\mathcal{X}, f, x$ satisfy the conditions of the theorem.
 
 Without loss of generality, we may assume: $x=0=f(0)$.

 Find $p$ such that $\forall m\exists y\in \mathcal{X}[| y |< \frac{1}{2^m} \;\wedge\; | f(y)|>\frac{1}{2^p}].$

Find an infinite sequence  $y_0, y_1, \ldots$ of reals  such that \\$\forall m[| y_m | < \frac{1}{2^m} \;\wedge\; | f(y_m)|>\frac{1}{2^p}].$

\smallskip We  give two arguments establishing the conclusion of the theorem.
 
 \medskip {\it First argument}.
 
I start constructing a  real number $z$ in an infinite sequence of steps, defining successively  $z(0), z(1), z(2), \ldots$. \\ I promise to take care that for each $n$, $z'(n)\le z'(n+1)\le z''(n+1)\le z''(n)$ and $z''(n)-z'(n) =\frac{1}{2^{n-1}}$. I am free, within these bounds,  to choose each $z(n)$ as I like it.

I have in mind  to perhaps `freeze' the free development of $z$, at some step $n$,  by making a decision that determines all values from that step $n$ on.

  I define $z(0)=(-1,1)$ and  do not freeze $z$ at step $0$. For each $n>0$,  if $z$ has not yet been frozen at one of the earlier steps, 
\textit{either} I do not freeze $z$ at this step and  define $z(n) = (-\frac{1}{2^n}, \frac{1}{2^n})$,  \textit{or} I freeze $z$ by defining, for each $i$,   $z(n+i) = y_{n-1}(q+i)$, where $q$ is the least $k$ such that $-\frac{1}{2^{n-1}}<y'_{n-1}(k)\le y''_{n-1}(k)<\frac{1}{2^{n-1}}$.

   $z$ is a well-defined real number.

Note: if I never make use of my freedom to freeze $z$, then $z=_\mathcal{R} 0$ and $f(z)=_\mathcal{R}0$.

Note: if I do use this freedom at some step  $n>0$, then   $z=_\mathcal{R}y_{n-1}$ and    \\$|f(z)|=_\mathcal{R} |f(y_{n-1})|>_\mathcal{R}\frac{1}{2^p}$. 

But \textit{I am free} and do not know myself if I ever will make use of the possibility to freeze $z$ or not.

I thus am unable to find an approximation of $f(z)$ smaller than $\frac{1}{2^p}$.

We must conclude: $f$ is \textit{not} defined at the argument $z$ and $z$ does \textit{not} belong to the domain $\mathcal{X}$ of $f$. 

\medskip {\it Second argument}.

 Define $z$ such that, for each $n$, \\if $ n<k_{99}$, then $z(n) = (-\frac{1}{2^n}, \frac{1}{2^n})$, and, if $ k_{99}\le n$, then $z(n) = y_{k_{99}}(q +n)$, \\where $q$ is the least $k$ such that $-\frac{1}{2^{k_{99}}}<y'_{k_{99}}(k)\le y''_{k_{99}}(k)<\frac{1}{2^{k_{99}}}$.

\smallskip

 Note: if $\forall n[n<k_{99}]$, then $f(z) =_\mathcal{R} f(0_\mathcal{R})=_\mathcal{R}0$, and, \\if $\exists n[n= k_{99}]$, then $|f(z)|=_\mathcal{R}|f(y_{k_{99}}|>_\mathcal{R}\frac{1}{2^p}$.

Assume we may calculate $f(z)$. By finding an approximation of $f(z)$ smaller than $\frac{1}{2^{p}}$ we will be able to decide: $f(z) \;\#_\mathcal{R}\; 0$ or $|f(z)| <_\mathcal{R} \frac{1}{2^p}$.

  \textit{Case (a)}. $f(z) \;\#_\mathcal{R}\;0$, and, therefore: $\neg \forall n[n<k_{99}]$.
  
  \textit{Case (b)}.   $|f(z)|< \frac{1}{2^p}$, and, therefore: $\forall n[n<k_{99}]$. 
  
  Conclude: $\neg \forall n[n<k_{99}]\;\vee\;\forall n[n<k_{99}]$. 
  
  This is a \textit{reckless} conclusion. 
  
  We must admit that, at this point of time, $f$ is \textit{not} defined at $z$ and $z$ does \textit{not} belong to the domain $\mathcal{X}$ of $f$.
  \end{proof}
  
Clearly, a function from $\mathcal{X}\subseteq \mathcal{R}$ with a positive discontinuity can \textit{not} be calculated at every point of $\mathcal{R}$.  
  In Brouwer's terminology: such a function  is \textit{not} a \textit{full} function.
 
  \section{The meaning of `not'}\label{S:not}
 \subsection{A weak and a strong interpretation}$\;$\\May we conclude, from Theorem \ref{T:negativecontinuity}: \begin{quote} {\it A function $f$ from $\mathcal{R}$ to $\mathcal{R}$ is \emph{not} positively discontinuous at any point $x$ in $\mathcal{R}$}? \end{quote} 
 
We have to be careful. There is some ambiguity in the use of the word `\textit{not}'.  When saying:  
 \begin{center}`$\textit{not:} \;(\exists n[n=k_{99}]\;\vee\; \forall  n[n<k_{99}])$'\end{center} 
 we   mean no more than: \begin{center}`\textit{As yet, we have  no proof of `$\exists n[n=k_{99}\;\vee\;\forall n[n<k_{99}]$'}.  \end{center}
 
 Most of the time, however, we take a proposition `not $P$',  $`\neg  P'$, in the stronger sense indicated in Subsection \ref{SS:otherlogic}: `\textit{Assuming $P$, one is led to a contradiction}'. This strong interpretation of `{\it not}'  is the canonical one. It follows from reading `$not$-$P$' as `$P \rightarrow 0=1$' and interpreting the  implication `$P\rightarrow Q$' as: `{\it Assuming $P$, one may prove $Q$}'.

  Following this strong and canonical interpretation, one may prove, for any proposition $P$,
  
 \begin{center} $\neg\neg (P\vee \neg P)$, \end{center}

 as follows: Assume: $\neg (P\vee\neg P)$, i.e. $P\vee\neg P$ leads to a contradiction. Then both $P$ and $\neg P$ will lead to a contradiction. We thus see: $\neg P$ and: $\neg P$ leads to a contradiction. We therefore have a contradiction. 

Conclude: $\neg(P\vee\neg P)$ leads to a contradiction, i.e.  $\neg\neg (P\vee\neg P)$. 

In particular, we  find: \begin{center} $\neg\neg(\exists n[n=k_{99}]\;\vee\;\forall n[n<k_{99}])$. \end{center}

\subsection{Some useful remarks}\label{SS:useful}  $\;$\\ Defining $Q:= \exists n[n=k_{99}]\;\vee\;\neg \exists n[n=k_{99}]$, we see that there are propositions $Q$ such that $\neg\neg Q$ is true but $Q$ itself is reckless or hardy. On the other hand, for every proposition $Q$, if $Q$, that is: $Q$ has a proof, then $\neg Q$, (the statement that $Q$ leads to a contradiction), leads indeed to a contradiction, i.e. if $Q$, then $\neg\neg Q$. 

Later in this paper, we need the following {\it law of contraposition}:\\ {\it if $P\rightarrow Q$, then $\neg Q \rightarrow \neg P$}, and its consequence: {\it if $P\rightarrow Q$, then $\neg\neg P\rightarrow \neg \neg Q$}.  

This law is easily seen to be valid for the proof-theoretical interpretation,

Using it, one sees that,  
as $Q\rightarrow \neg\neg Q$ is always true, also $\neg\neg\neg Q\rightarrow \neg Q$ is always true.   (The scheme $\neg\neg\neg Q\rightarrow Q$ was discovered by Brouwer and may be called \textit{Brouwer's first law of logic}).

Also note: \\if $(P\;\vee\neg P)\rightarrow \neg Q$, then, as $\neg\neg (P\;\vee\;\neg P)$, one has: $\neg\neg\neg Q$, and, therefore:  $\neg Q$. 

This fact may be useful in practice: if it is our aim to prove a negative statement $\neg Q$ it does no harm to make an extra ssumption $P\vee\neg P$.

\smallskip One should keep in mind: $\neg P$  means: $P$ \textit{is reducible to absurdity}. Therefore, the constructive mathematician has nothing to criticize if someone proves $\neg P$ by reducing $P$ to absurdity. If one does so, one does what one has to do.\footnote{Hardy's \textit{finest weapon} is not bad in itself but can only be used for proving negative statements. Constructively, negative statements are not so useful. There are better statements and finer weapons.} She feels unhappy, however,  if one proves $P$ itself by reducing $\neg P$ to absurdity. The conclusion of the latter procedure is $\neg\neg P$ only, a statement that, most of the time, is  weaker than $P$. 
\subsection{More fugitive numbers}
The following Definition extends Definition \ref{D:k99}.
\begin{definition}[The fugitive numbers $k_\alpha$]\label{D:k_alpha} Let an infinite sequence \\$\alpha=\alpha(0), \alpha(1), \alpha(2), \ldots$ of natural numbers be given.
 For each $n$ we define\footnote{Note that we do not define a natural number $k_\alpha$ bot only the meaning of an expression like `$ k_\alpha\le n$'.} :
  
  $ k_\alpha\le n$ if and only if  $\exists j\le n [\alpha(j)\neq 0]$, and:
  
  $n<k_\alpha$ if and only  if $\forall j\le n [\alpha(j)=0]$, and:
  
  $n=k_\alpha$ if and only if $ k_\alpha\le n$ and $\forall j<n[j<k_\alpha]$, i.e. \\$n$ is the least $j$ such that $\alpha(j)\neq 0$.
  
 $\mathbf{LPO}$, the \textit{limited principle of omniscience}\footnote{This statement got its name from  the American analyst E.~Bishop, who, fifty years after Brouwer, founded his own school of \textit{constructive analysis}, see \cite[page 3]{bridgesrichman}. The name comes from calling the Principle of the Excluded Third $P\;\vee\;\neg P$ the \emph{Principle of Omniscience}, a perhaps infelicitous decision, as the name suggests mathematics is a matter of `\textit{knowing facts}'.}, is the statement \begin{center}$\forall \alpha[\exists n[k_\alpha\le n] \;\vee\;\forall n[n<k_\alpha]].$ \end{center} 
 
 $\mathbf{WLPO}$, the \textit{weak limited principle of omniscience} is the statement \begin{center}$\forall \alpha[\neg\forall  n[n<k_\alpha] \;\vee\;\forall n[n<k_\alpha]].$ \end{center}\end{definition}

Using the second argument in the proof of Theorem \ref{T:negativecontinuity} one may see: \\\textit{If a real function from $\mathcal{R}$ to $\mathcal{R}$ has a positive discontinuity, then $\mathbf{WLPO}$.}

For an informal constructive mathematician like Brouwer, $\mathbf{LPO}$, and also \\$\mathbf{WLPO}$, represent \textit{the absurd}, \textit{the contradiction}, a statement one  feels sure never to be able to prove. For her,   the argument for Theorem \ref{T:negativecontinuity}   establishes, for every  `full' function $f$ from $\mathcal{R}$ to $\mathcal{R}$,  \begin{center} $\neg$(there exists $x$ in $\mathcal{R}$ such that $f$ is positively discontinuous at $x$).\end{center}

In  Section \ref{S:bcp}, we introduce {\it Brouwer's Continuity Principle}, an axiomatic assumption one perhaps finds  plausible after having seen and accepted the first argument in the proof of  Theorem \ref{T:negativecontinuity}. Brouwer's Continuity Principle   proves: $\neg \mathbf{WLPO}$, see Theorem \ref{T:notlpo}.

\section{The Continuity Principle}\label{S:bcp}  Let $f$ be a (`full') function from $\mathcal{R}$ to $\mathcal{R}$. The statement that there is no point  where $f$ is positively discontinuous is, constructively, a weak statement and not a very useful one.    The positive result that $f$ is continuous at every point is obtained from an axiomatic assumption first used by Brouwer.

\subsection{Understanding Brouwer's Continuity Principle}\label{SS:bcp}

We first have to agree upon some notation. 

In the following definition, we use $k, n_0, n_1, \ldots, s, t, u,\ldots$ as variables over the set $\mathbb{N}$.

\begin{definition}[Coding finite sequences of natural numbers by natural numbers]\label{D:coding} Let $p:\mathbb{N}\rightarrow\mathbb{N}$ be the function enumerating the primes: $p(0)=2, p(1)=3, p(2) = 5, \ldots$.

For each $k$, for all $n_0, n_1, \ldots,n_{k-1}$, $\langle n_0, n_1, \ldots, n_{k-1}\rangle := p(k-1)\cdot \prod_{i<k}p(i)^{n_i}-1$.

We also define: $\langle\;\rangle :=0.$

For all $s$, for each  $k$, for all $n_0, n_1, \ldots,n_{k-1}$, if $s=\langle n_0, n_1, \ldots, n_{k-1}\rangle$, then $length(s)=k$ and, for all $i<length(s)$, $s(i):=n_i$.  

\smallskip For all $k$, $\omega^k:=\{s\mid length(s) =k\}$. 

\smallskip For all $k,l$, for all $s$ in $\omega^k$, for all $t$ in $\omega^l$, $s\ast t$ is the element of $\omega^{k+l}$ satisfying $\forall i<k[s\ast t(i) =s(i)]$ and $\forall j<l[s\ast t(k+j)=t(j)]$. 

\smallskip For all $s,t$, $s\sqsubseteq t\leftrightarrow \exists u[s\ast u =t]$ and $s\sqsubset t \leftrightarrow (s\sqsubseteq t \;\wedge\; s\neq t)$. 

\smallskip For all $s,t$, $s\perp t\leftrightarrow \neg(s\sqsubseteq t \;\vee\; t\sqsubseteq s)$.

\end{definition}\begin{definition}
$\mathcal{N}:=\omega^\omega$ is the set of all infinite sequences $\alpha=\alpha(0), \alpha(1), \alpha(2),\ldots$ of non-negative integers. We use $\alpha, \beta, \ldots \varphi, \psi, \ldots$ as variables over $\mathcal{N}$.

$\mathcal{N}$ is sometimes called \emph{Baire space}.  

\smallskip
For each $\alpha$, for each $m$,  $\overline \alpha m :=\langle \alpha(0), \alpha(1), \ldots, \alpha(m-1)\rangle.$

For all $s$, for all $\alpha$, $s\sqsubset \alpha \leftrightarrow \exists n[\overline \alpha n = s]$ and $s\perp \alpha\leftrightarrow \neg(s\sqsubset \alpha)$.

For each $n$, we let $\underline n$ be the infinite sequence with the constant value $n$, that is,  for all $i$, $\underline n (i) = n$. 

\end{definition}
\begin{axiom}[Brouwer's Continuity Principle, $\mathbf{BCP}$]\label{A:bcp}

 For every  $R\subseteq\mathcal{N} \times \mathbb{N}$,

if $\forall \alpha \in \mathcal{N}\exists n[\alpha Rn]$, then $\forall \alpha\in\mathcal{N}\exists m \exists n \forall \beta \in \mathcal{N}[\overline \alpha m \sqsubset \beta  \rightarrow \beta R n]$.

\end{axiom}

How may we convince each other that thia axiom is `reasonable' and that we should accept it as a rule for our common mathematical game?

\smallskip
We should ask what we mean by the `set' $\mathcal{N}$ of all infinite sequences of natural numbers. This `set' is a kind of framework in which all kinds of infinite sequences will grow, although only a very few of them have been realized up to now.

 Cantor's suggestion  that a set is the result of taking together its (earlier constructed?) elements is a notion that does not make sense to us.

An infinite sequence $\alpha$ may be given  by a \textit{finite description} or an \textit{algorithm} that enables us to find, one by one,  the successive values $\alpha(0), \alpha(1), \ldots$ of $\alpha$.    Brouwer calls such infinite sequences \textit{lawlike sequences}.

As we saw in the first argument in the proof of Theorem \ref{T:negativecontinuity}, there are also infinite sequences $\alpha$  that are not governed by a law.  The successive values $\alpha(0), \alpha(1), \alpha(2), \ldots$ of $\alpha$ then are  \textit{freely chosen, step by step}.   At any given moment, only a finite initial part $\bigl(\alpha(0), \alpha(1), \ldots , \alpha(n-1)\bigr)$ of  $\alpha$ is known. One may ask for further values, but, having received the answers, one still only knows a finite initial part of $\alpha$, albeit  a longer one.  

There are intermediate possibilities. Having begun with choosing freely, step by step, $\alpha(0), \alpha(1), \ldots \alpha(n-1)$ one may  decide,   to `freeze' $\alpha$ and to determine  the remaining values by means of some rule or algorithm. 

 There is even a possibility  I make $\alpha$ \textit{dependent on my future mathematical experience}, by saying, for instance, $\alpha(n) =0$ if at the moments $0,1, \ldots, n$, I did not find a proof of the $abc$-conjecture, and $\alpha(n) = 1$, if I did.

If we think of the set $\mathcal{N}$ as a framework making room for   lawlike sequences as well as for free-choice-sequences and sequences that are of some intermediate kind, we are thinking of, what Brouwer calls, \textit{the full continuum}. 

Now assume $R\subseteq \mathcal{N}\times \mathbb{N}$ is given such that $\forall \alpha \exists n[\alpha Rn]$. \textit{Every} infinite sequence $\alpha$, \textit{also a lawlike sequence $\alpha$},  may be thought of as being produced, step by step, by a \textit{black box}. At the moment we  produce a number  $n$ such that $\alpha Rn$,   only finitely many values of $\alpha$, say $\alpha(0), \alpha(1), \ldots, \alpha(m-1)$, will have become known. Clearly,  for any infinite sequence such that $\beta(0) =\alpha(0)$, $\beta(1) = \alpha(1), \ldots$ and $\beta(m-1) = \alpha(m-1)$, one may conclude: $\beta Rn$. 

 \smallskip Adopting Axiom \ref{A:bcp} as a starting point for our further mathematical discourse, we lay down a \textit{canonical meaning} for expressions of the form `$\forall \alpha\exists n[\alpha Rn]$'.  Earlier, we did so for expressions of the form \textit{`infinite', `uncountable'} and \textit{`or'}.

\begin{theorem}\label{T:notlpo} $\mathbf{BCP} \Rightarrow \neg\mathbf{WLPO}$. \end{theorem}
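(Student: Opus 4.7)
The plan is to derive a contradiction from the joint assumption of $\mathbf{BCP}$ and $\mathbf{WLPO}$. Recall that, by the definitions in Section~\ref{S:not}, $\forall n[n<k_\alpha]$ holds exactly when $\alpha(j)=0$ for every $j$, i.e.\ when $\alpha =_\mathcal{N} \underline 0$; and $\neg\forall n[n<k_\alpha]$ is its negation. So $\mathbf{WLPO}$ is the assertion that, for every $\alpha$, either we have a reduction to absurdity of $\alpha =\underline 0$ or we have a proof that $\alpha =\underline 0$.

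First I would translate $\mathbf{WLPO}$ into the syntactic shape required by $\mathbf{BCP}$. Define $R \subseteq \mathcal{N}\times \mathbb{N}$ by stipulating that, for all $\alpha$ and all $n$,
\[
\alpha R n \;\leftrightarrow\; \bigl(n=0\;\wedge\;\forall j[\alpha(j)=0]\bigr)\;\vee\;\bigl(n=1\;\wedge\;\neg\forall j[\alpha(j)=0]\bigr).
\]
Then $\mathbf{WLPO}$ is exactly the statement $\forall\alpha\exists n[\alpha R n]$. Applying $\mathbf{BCP}$ at the specific sequence $\alpha:=\underline 0$, we obtain natural numbers $m,n$ such that
\[
\forall \beta\in\mathcal{N}[\overline{\underline 0}m \sqsubset \beta \rightarrow \beta R n].
\]

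Next I would pin down the value of $n$. Since $\underline 0$ itself extends $\overline{\underline 0}m$, we get $\underline 0 R n$. But $\underline 0 R 1$ would require $\neg\forall j[\underline 0(j)=0]$, i.e.\ a reduction of the evident truth $\forall j[\underline 0(j)=0]$ to absurdity, which is absurd; and by construction $\underline 0 R n$ is false for any $n\ge 2$. Hence $n=0$, and we have
\[
\forall \beta\in\mathcal{N}[\overline{\underline 0}m \sqsubset \beta \rightarrow \forall j[\beta(j)=0]].
\]

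Finally I would exhibit a $\beta$ refuting this. Let $\beta$ be the infinite sequence with $\beta(i)=0$ for $i\neq m$ and $\beta(m)=1$; then $\overline{\underline 0}m \sqsubset \beta$, so the conclusion above forces $\beta(m)=0$, contradicting $\beta(m)=1$. Thus $\mathbf{WLPO}$ leads to a contradiction under $\mathbf{BCP}$, which is the desired $\neg\mathbf{WLPO}$. The only subtle point is the middle step, ruling out $n=1$: one has to use the strong (proof-theoretic) reading of negation discussed in Section~\ref{S:not}, so that $\neg(\underline 0 =\underline 0)$ is genuinely absurd rather than merely `as yet unproved'. Everything else is a routine application of the continuity principle.
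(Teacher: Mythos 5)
Your proof is correct and follows essentially the same route as the paper's: apply $\mathbf{BCP}$ to the relation implicit in the disjunction of $\mathbf{WLPO}$ at the sequence $\underline 0$, then test the two resulting horns with $\beta=\underline 0$ and with a $\beta$ that agrees with $\underline 0$ up to stage $m$ but then deviates. The only cosmetic difference is that you write out the relation $R\subseteq\mathcal{N}\times\mathbb{N}$ explicitly and first pin down the value of $n$ before refuting it, whereas the paper phrases the case split directly on the two disjuncts.
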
\begin{proof} Assume $\mathbf{WLPO}$ i.e.  $\forall \alpha[\neg\forall n[n<k_\alpha]\;\vee\;\forall n[n<k_\alpha]]$, i.e.  \\$\forall \alpha[\neg\forall n[\alpha(n)=0]\;\vee\;\forall n[\alpha(n)=0]]$.
\\Consider $\alpha = \underline 0$ and,  
applying $\mathbf{BCP}$, find $m$ such that\\ \textit{either}  $\forall \beta[\overline{\underline 0}m \sqsubset\beta\rightarrow \neg\forall n[\beta(n)= 0]]$ \textit{or}  $\forall \beta[ \overline{\underline 0}m \sqsubset \beta \rightarrow \forall n[\beta(n)= 0]]$. 
\\The first of these two statements   is not true, consider: $\beta = \underline 0$, and the second one also fails, consider: $\beta = \underline{\overline 0}m\ast\underline 1$. 

We thus  obtain a contradiction.\end{proof}

\subsection{The (pointwise) continuity of functions from $\mathcal{R}$ to $\mathcal{R}$}$\;$

In the next Definition, we introduce a function  $\alpha\mapsto u_\alpha$ associating to every $\alpha$ in $\mathcal{N}$ a real number $u_\alpha$. 

\begin{definition} We let $(r_0', r_0''), \;(r_1', r_1''), \;(r_2', r_2''), \;\ldots$ be a fixed enumeration of all pairs of rationals.

We define a mapping associating to each $s\neq 0$ a pair $(u_s', u_s'')$ of rationals, as follows. 

For each $n$, \emph{if} $0< r_n''-r_n' \le 1$, then $(u'_{\langle n \rangle}, u''_{\langle n \rangle})= (r_n', r_n'')$, and, \\\emph{if not}, then $(u'_{\langle n \rangle}, u''_{\langle n \rangle})= (0,1)$. 

For each $s\neq 0$, for each $n$, \emph{if} $u'_s<r'_n<r''_n<u''_s$ and\\ $0< r_n''-r_n' \le \frac{1}{2} (u''_s-u'_s)$, then $(u'_{s\ast\langle n \rangle}, u''_{s\ast\langle n \rangle})=(r_n', r_n'')$, and, \\\emph{if not}, then  $(u'_{s\ast\langle n \rangle}, u''_{s\ast\langle n \rangle})=(\frac{2}{3}u'_s + \frac{1}{3}u''_s,\frac{1}{3}u'_s + \frac{2}{3}u''_s)$.  

\smallskip For each  $\alpha$ in $  \mathcal{N}$ we let $u_\alpha$ be the  infinite sequence  of pairs of rationals such that, \\for all $n$, $u_\alpha(n)=(u_{\overline \alpha(n+1)}', u_{\overline \alpha(n+1)}'')$.  \end{definition}

\begin{theorem}\footnote{See \cite{veldman1982}, \cite[\S 2]{veldman2001} and \cite[\S 4]{veldman20b}.}\label{T:realfunctionscontinuous}\begin{enumerate}[\upshape (i)] \item For every $\alpha$, $u_\alpha$ is a real number. \item For every real number $x$, there exists $\alpha$ such that $x=_\mathcal{R}u_\alpha$.
\item $\mathbf{BCP}\Rightarrow$ Every  function from $\mathcal{R}$ to $\mathcal{R}$ is continuous at every point $x$.  \end{enumerate} \end{theorem}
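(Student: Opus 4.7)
For part (i), I would check that the recursive clauses make each interval $(u'_{s\ast\langle n\rangle}, u''_{s\ast\langle n\rangle})$ strictly contained in $(u'_s, u''_s)$ with width at most half: the ``default'' branch produces width exactly $\frac{1}{3}(u''_s - u'_s)$, while the ``good'' branch explicitly demands $r''_n - r'_n \leq \frac{1}{2}(u''_s - u'_s)$. From this, the shrinking property of $u_\alpha$ is immediate, and the widths of $u_\alpha(n)$ decrease geometrically to $0$, giving the dwindling property.

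For part (ii), given $x \in \mathcal{R}$, I build $\alpha$ step by step, maintaining the invariant that $u'_{\overline\alpha k} <_\mathcal{R} x <_\mathcal{R} u''_{\overline\alpha k}$ for every $k \geq 1$. For $\alpha(0)$: pick $k_0$ with $x''(k_0) - x'(k_0) < 1$, choose rationals $r', r''$ with $r' < x'(k_0)$, $x''(k_0) < r''$, $r'' - r' \leq 1$, and let $\alpha(0) = n$ with $(r'_n, r''_n) = (r', r'')$. Inductive step: given $u_{\overline\alpha k} = (u', u'')$ with $x$ strictly inside, use the density of $\mathbb{Q}$ to pick rationals $u' < r' < x'(k_0) \leq x''(k_0) < r'' < u''$ with $r'' - r' \leq \frac{1}{2}(u'' - u')$ (for $k_0$ suitably large), and set $\alpha(k)$ to the corresponding index. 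The resulting $\alpha$ satisfies $u'_\alpha(n) <_\mathcal{R} x <_\mathcal{R} u''_\alpha(n)$ for all $n$, which together with the elementary fact $x'(k') \leq x''(k)$ for all $k, k'$ yields $u_\alpha =_\mathcal{R} x$.

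For part (iii), fix $f \colon \mathcal{R} \to \mathcal{R}$, a point $x$, and a precision $p$. By (ii), write $x =_\mathcal{R} u_\alpha$ for some $\alpha$ with $x$ strictly inside every $u_{\overline\alpha k}$. Define $R \subseteq \mathcal{N} \times \mathbb{N}$ by: $\beta R n$ iff $n$ codes a rational pair $(a, b)$ with $b - a < \frac{1}{2^{p+1}}$ such that $a < f(u_\beta)'(k)$ and $f(u_\beta)''(k) < b$ for some $k$. Since $f(u_\beta) \in \mathcal{R}$, we have $\forall \beta \exists n[\beta R n]$; applying $\mathbf{BCP}$ at $\alpha$ yields $m_1$ and $n$ such that every $\gamma$ extending $\overline\alpha m_1$ satisfies $\gamma R n$, forcing $f(u_\gamma)$ and $f(u_\alpha)$ into a common rational interval of width below $\frac{1}{2^{p+1}}$, whence $|f(u_\gamma) - f(u_\alpha)| <_\mathcal{R} \frac{1}{2^p}$. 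The main obstacle is the final bridging step: converting ``$\gamma$ extends $\overline\alpha m_1$'' into an explicit neighbourhood of $x$ in $\mathcal{R}$. This relies on the strict-inclusion invariant from (ii): since $u'_{\overline\alpha m_1} <_\mathcal{R} x <_\mathcal{R} u''_{\overline\alpha m_1}$, one can find a rational $\delta > 0$ and an $m$ with $\frac{1}{2^m} < \delta$ such that any $y$ with $|y - x| <_\mathcal{R} \frac{1}{2^m}$ still lies strictly inside $u_{\overline\alpha m_1}$; rerunning the construction of (ii) from level $m_1$ with $y$ in place of $x$ then produces $\gamma$ with $\overline\alpha m_1 \sqsubset \gamma$ and $y =_\mathcal{R} u_\gamma$, completing the continuity argument.
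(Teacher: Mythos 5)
Your proposal is correct and follows essentially the same route as the paper: set up a relation $\alpha R n$ catching $f(u_\alpha)$ in a rational interval of width below the required precision, apply $\mathbf{BCP}$ to stabilize on an initial segment $\overline\alpha q$, and use the strict nesting of the intervals $(u'_{\overline\alpha k}, u''_{\overline\alpha k})$ (a property that in fact holds automatically for \emph{every} $\alpha$, since both branches of the recursion defining $u_s$ place the new rational interval strictly inside the old one) to convert ``$\beta$ extends $\overline\alpha q$'' into a genuine real neighbourhood of $x$ by re-running (ii). Parts (i) and (ii), which the paper leaves to the reader, are filled in soundly by your arguments.
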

\begin{proof} The proof of (i) and (ii) is left to the reader.

\smallskip (iii) Let $f$ be a real function and let a real $x$ be given. We shall prove that $f$ is continuous at $x$.

Let $p$ be given. We have to find $m$ such that, for every real $y$, if $|y-x| < \frac{1}{2^m}$, then  $ |f(y)-f(x)|<\frac{1}{2^p}$. 

Note: $\forall \alpha \exists n[r'_n < f(u_\alpha) < r_n'']$ and 
find $\alpha$ such that $x=_\mathcal{R} u_\alpha$.

Applying $\mathbf{BCP}$, find $q, n$ such that $\forall \beta[\overline \alpha q \sqsubset \beta \rightarrow r'_n <  f(u_\beta)<r_n'']$. 

Find $i,j$ such that $u_\alpha(n-1) = (r_i', r_i'')$ and $u_\alpha(n)= (r_j',r_j'')$.

 Note: $r_i'<_\mathbb{Q} r_j'<_\mathbb{Q} r_j''<_\mathbb{Q} r_i''$. 

Find $m$ such that $\frac{1}{2^m}<\min_\mathbb{Q}(r_j'-_\mathbb{Q}r_i', r_i''-_\mathbb{Q}r_j'')$.

 Note: for every real $y$, if $|y-x|<\frac{1}{2^m}$, then $r_i'<y<r_i''$ and there exists $\beta$ such that $\overline \alpha n \sqsubset \beta$ and $y=_\mathcal{R} u_\beta$ and, therefore: $r_n'< f(u_\beta)<r_n''$ and $r_n' < f(y) < r_n''$.
 
 Conclude: for every real $y$, if $|y-x|<\frac{1}{2^m}$ then $|f(y)-f(x)| < r_n''-r_n' <\frac{1}{2^p}$.    \end{proof}

 \section{The Fan Theorem}\label{S:fantheorem}

 \subsection{Proving the Fan Theorem}
 
 The intuitionistic mathematician holds not only that every function from $[0,1]$ tot $\mathcal{R}$, like every function from $\mathcal{R}$ to $\mathcal{R}$, see Theorem \ref{T:realfunctionscontinuous}(iii) is  continuous at every point, but also that every function from $[0,1]$ to $\mathcal{R}$ is {\it continuous uniformly on $[0,1]$}. In order to obtain this result, she first proves the  {\it Fan Theorem}. 
 
 We need some definitions in order to formulate the Fan Theorem.
 \begin{definition} Let $\mathcal{X}\subseteq\mathcal{N}$ and  $B\subseteq\mathbb{N}$. 
 
 $B$ \emph{is a bar in} $\mathcal{X}$, notation: $Bar_\mathcal{X}(B)$,  if and only if: $\forall \alpha \in \mathcal{X}\exists s \in B[s\sqsubset \alpha]$.\smallskip
 
 $\mathcal{C}:=2^\omega:=\{\alpha\mid\forall n[\alpha(n)<2]\}$.
 
 $\mathcal{C}$ is  sometimes called \emph{Cantor space}.
 
 \smallskip $Bin:=2^{<\omega}:=\{s\mid \forall i<length(s)[s(i)<2]\}$. 
 
 \smallskip For each $n$, $Bin_n:=\{s \in Bin\mid length(s)=n\}=\{s\in \omega^n\mid\forall j<n[s(j)<2]\}$. \end{definition}
\begin{definition}
For each  $\mathcal{X}\subseteq\mathcal{N}$, for each $s$, we define:
$\mathcal{X}\cap s:=\{\alpha \in \mathcal{X}|s \sqsubset \alpha\}.$ \end{definition}

The Fan Theorem is the statement that every bar in $\mathcal{C}$ has a finite subbar.  Brouwer actually proved the more general but equivalent statement that  every bar in a so-called \textit{fan} has a finite subbar. Here, $B\subseteq \mathbb{N}$ is called {\it finite}  if and only if $\exists u\forall s[s \in B\leftrightarrow\exists i<length(u)[s=u(i)]]$. 

 We avoid defining the  notion of a {\it fan}. For understanding the name of the Theorem, it suffices to know every fan is a subset of $\mathcal{N}$ and that $\mathcal{C}$ is the prime example of a fan. 
 
 The proof of the Fan Theorem turns on   a {\it philosophical claim}.

 \begin{theorem}[Brouwer's Fan Theorem]\label{T:bft} For every $B\subseteq \mathbb{N}$, if $Bar_\mathcal{C}(B)$, then
 there exists  $u$ such that $\forall i <length(u)[u(i) \in B]$ and  $\forall \alpha \in \mathcal{C}\exists i<length(u)[  u(i)\sqsubset \alpha].$ \end{theorem}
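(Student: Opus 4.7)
The plan is to introduce an inductive notion of ``$s$ is finitely barred by $B$'' and then argue, via Brouwer's reflection on the nature of proofs, that any bar in $\mathcal{C}$ witnesses this notion at the empty sequence.

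First I would define a predicate $B^\sharp$ on $Bin$ as the least fixed point of the monotone operator: $s \in B^\sharp$ if $s \in B$, or else $s \in Bin$ and both $s\ast\langle 0\rangle \in B^\sharp$ and $s\ast\langle 1\rangle \in B^\sharp$. A straightforward induction on the generation of $B^\sharp$ shows that from $B^\sharp(s)$ one can extract a finite list $u$ of elements of $B$, each extending $s$, such that every $\alpha \in \mathcal{C}$ with $s \sqsubset \alpha$ satisfies $u(i) \sqsubset \alpha$ for some $i < length(u)$. Applied at $s = \langle\rangle$, this yields the list $u$ promised by the theorem, so everything reduces to establishing $B^\sharp(\langle\rangle)$.

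The main obstacle is exactly this reduction: passing from the semantic hypothesis $Bar_\mathcal{C}(B)$ to the inductively witnessed statement $B^\sharp(\langle\rangle)$. Here Brouwer's philosophical claim enters. A proof that $B$ is a bar in $\mathcal{C}$ must, intuitionistically, be given as a construction that, when presented with any binary sequence $\alpha$, produces an initial segment of $\alpha$ lying in $B$. Brouwer maintains that any such construction can be canonically re-analyzed as a well-founded tree of case distinctions on successive bits: at each node $s \in Bin$ the analysis either terminates with the observation $s \in B$, or branches into the two sub-analyses at $s\ast\langle 0\rangle$ and $s\ast\langle 1\rangle$. This tree of analyses is, in form, precisely a derivation of $B^\sharp(\langle\rangle)$ under the inductive definition above. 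Accepting this reflective claim---which is the heart of Brouwer's argument and, as the author warns, closer to a philosophical thesis than to a theorem of classical logic---delivers $B^\sharp(\langle\rangle)$.

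Once $B^\sharp(\langle\rangle)$ is in hand, the extraction of $u$ proceeds by the induction already announced: at the base case $\langle\rangle \in B$, take the singleton list with $u(0) = \langle\rangle$; at the inductive case, concatenate the two lists produced for $\langle 0\rangle$ and $\langle 1\rangle$, with each of their entries already being elements of $B$ extending $\langle\rangle$. I expect no real difficulty here beyond routine bookkeeping with the coding conventions of Definition~\ref{D:coding}; all the essential work is concentrated in the philosophical step of the previous paragraph.
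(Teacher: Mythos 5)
Your proof follows essentially the same strategy as the paper's: set up an inductive notion (the paper's ``canonical proof of `$\langle\;\rangle$ is safe' '', your $B^\sharp$), invoke Brouwer's reflective claim that any evidence for $Bar_\mathcal{C}(B)$ must be of this shape, and then extract the finite list $u$ by induction along that notion. Your extraction lemma is the same content as the paper's device of replacing ``safe'' by ``supersafe'' everywhere in the canonical proof and checking that each inference step remains sound; the two are just different presentations of one argument. The one substantive divergence is the treatment of backward reasoning steps. The paper's canonical proof explicitly admits, besides the base case and the forward (branching) step, a \emph{backward} step deriving ``$s\ast\langle j\rangle$ is safe'' from ``$s$ is safe'', and the ``supersafe'' verification dutifully checks soundness for all three. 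Your $B^\sharp$ has no backward rule at all, so your reflective claim is prima facie stronger than the one the paper makes: it asserts that the canonical analysis never passes back down to a longer node after having justified a shorter one. The paper is careful precisely here --- for the Bar Theorem in $\mathcal{N}$ backward steps genuinely cannot be eliminated (see Theorem~\ref{T:kleenebi} and Subsection~\ref{SS:backward}), and it is only a separate argument, Theorem~\ref{T:fanbi}(ii), proved \emph{after} the Fan Theorem via the monotonization $C^\ast:=\{s\in 2^{<\omega}\mid \exists t\sqsubseteq s[t\in C]\}$, that shows backward steps may be dropped in the binary case. In an argument that is avowedly philosophical your shortcut is defensible and reaches the same place, but you should be aware that you are silently absorbing work the paper defers to a later lemma, and that the identical shortcut would be wrong over $\mathcal{N}$.
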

 
 \begin{proof}\footnote{In \cite{brouwer27} and \cite{brouwer54}, the Fan Theorem is  obtained as a corollary of the `Bar Theorem', in this paper Theorem \ref{T:bimbid}. We give an argument for the Fan Theorem inspired by Brouwer's proof of the Bar Theorem, like A. Heyting did in \cite[\S 3.4.2]{heyting56}.}
 Let $B\subseteq \mathbb{N}$  be given such that $Bar_\mathcal{C}(B)$.

 Define, for every $s$ in $2^{<\omega}$,
  $s$ is {\it $B$-safe}, or simply: {\it safe}, if and only if   $Bar_{\mathcal{C}\cap s}(B)$. 
 
 As $Bar_\mathcal{C}(B)$,  $\langle \;\rangle$ is  safe.
 
We now ask the probably somewhat surprising question: `{\it How is it possible we know this?}' and, 
after some reflection, we come forward with the \textit{claim}: there  must be a \textit{canonical proof} of the fact: 
 `$\langle \;\rangle$ is  safe.' 
 
 \smallskip What do we mean by: `a canonical proof'?
  
  The canonical proof is an arrangement of statements of the form: `{\it $s$ is safe}'. 
 The \textit{conclusion} of the canonical proof is the statement:
 `$\langle \;\rangle$ is  safe.'
 
 The \textit{starting points}  of the canonical proof have the form:
 
     \begin{center} $s\in 2^{<\omega}$ and $s \in B$.
     
     \textit{Therefore:} $s$ is safe.
     
     \end{center}
     
     There are only two kinds of reasoning steps: \textit{forward reasoning steps} and \textit{backward reasoning steps}.
     
     \smallskip
     A \textit{forward reasoning step} has the form:
     
     \begin{center} 
     
     $s\in 2^{<\omega}$, and 
      $s\ast\langle 0 \rangle$ is safe,
     and $s\ast\langle 1 \rangle$ is safe.

     \smallskip
     \textit{Therefore:}  $s$ is safe. 
     \end{center}
     
     \smallskip A \textit{backward reasoning step} either has the form:
     
       \begin{center}
       
       $s\in 2^{<\omega}$ and $s $ is safe.
       
       \smallskip
       \textit{Therefore:} $s\ast\langle 0 \rangle$ is safe.
       \end{center}
   
   or it has the form      
      \begin{center}
       
       $s\in 2^{<\omega}$ and $s $ is safe.
       
       \smallskip
       \textit{Therefore:} $s\ast\langle 1 \rangle$ is safe.
       \end{center}

       Clearly, the canonical proof may be visualized as a finite tree, with the statement `$\langle \;\rangle$ is safe' at its bottom node and statements `$s\in B$' at its top nodes. Each node that is not a top node has either one or two upstairs neighbours.
       
     One easily sees that all resoning steps are sound and that a canonical proof of `$\langle\;\rangle$ is safe' is indeed a proof of `$\langle\;\rangle$ is safe'.  
     
     \smallskip Our claim gives expression to the feeling that, if we know: `$Bar_\mathcal{C}(B)$', this knowledge must be based upon an orderly organization of elementary pieces of information of the form: `$s\in B$'.

       \medskip Now, trusting our claim, let us take a canonical proof of: `$\langle \; \rangle$ is safe.' We are going to use it in order to build another proof.

       Define, for every $s$ in $2^{<\omega}$,  
 $s$ is {\it supersafe} if and only if there exists $u$ such that $\forall i<\mathit{length}(u)[u(i) \in B]$ and  $\forall \alpha \in \mathcal{C} \cap s\exists i<length(u) [ u(i)\sqsubset\alpha ]$.  
  
  Note that the conclusion we are after is: `$\langle \;\rangle$ is supersafe.'
  
  \smallskip

  Replace,  in the canonical proof of `$\langle\;\rangle$ is safe',   every statement  `$s$ is safe' by the statement `$s$ is supersafe'. 
  
  The result will be   another \textit{valid proof}.
   
   Why? 
   
   \smallskip First,  the new starting points are sound:  if $s\in 2^{<\omega}$ and $s\in B$, define: $u:=\langle s\rangle$ and note: $u(0)\in B$ and  $\forall \alpha\in \mathcal{C}\cap s[ u(0)\sqsubset\alpha]$.
   
   \smallskip
   Then, the new forward reasoning steps are sound.

   For assume:  $s\in 2^{<\omega}$ and both  $s\ast\langle 0 \rangle$ and    $s\ast\langle 1 \rangle$ are supersafe.
      
      Find $u_0, u_1$ such that: for each $ j <2$,  $\forall i\le \mathit{length}(u_j)[u_j(i)\in B]$ and \\$ \forall \alpha \in \mathcal{C}\cap s\ast\langle j\rangle \exists i<\mathit{length}(u_j)[u_j(i)\sqsubset \alpha ].$
      
      Define $u:=u_0\ast u_1 $ and note 
      \\$\forall i <\mathit{length}(u)[u(i) \in B]$ and  $\forall \alpha \in \mathcal{F}\cap s \exists i<\mathit{length}(u)[u(i) \sqsubset \alpha ],$\\
      that is: $s$ is supersafe.
      
      \smallskip
      Also, the new backward reasoning steps are sound.
      
      For assume: $s\in 2^{<\omega}$ and $s$ is supersafe.
      
      Find $u$ such that 
      
      $\forall i <\mathit{length}(u)[u(i) \in B]$ and $\forall \alpha \in \mathcal{C}\cap s \exists i<\mathit{length}(u)[u(i) \sqsubset \alpha ].$
      
      Note: for both $j<2$, $\mathcal{C} \cap s\ast\langle j\rangle \subseteq \mathcal{C}\cap s$ and  $s\ast\langle j \rangle$ is  supersafe.
       
       \smallskip
       Conclude: our new proof is sound and its  conclusion is true, i.e.
       $\langle \;\rangle$   is supersafe.
       \end{proof}
               
     Generalizing the proof of Theorem \ref{T:bft} we obtain the following  conclusion:   \begin{theorem}[Bar Induction in $\mathcal{C}$]\label{T:fanbi}Let  $B\subseteq\mathbb{N}$ be a bar in $\mathcal{C}$. 
        \begin{enumerate}[\upshape (i)]\item Assume  $B\subseteq C\subseteq 2^{<\omega}$  and, for all $s$ in $2^{<\omega}$, \\ $s\in C$ if and only if  $\forall j<2[s\ast\langle j\rangle\in C]$.  Then $\langle \;\rangle \in C$.\item Assume  $B\subseteq C\subseteq 2^{<\omega}$  and, for all $s$ in $2^{<\omega}$,   \\if  $\forall j<2[s\ast\langle j\rangle\in C]$, then $s\in C$.   Then $\langle \;\rangle \in C$. \end{enumerate} \end{theorem}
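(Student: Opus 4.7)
The proof should be a direct generalization of the Fan Theorem (Theorem \ref{T:bft}): given that $B$ bars $\mathcal{C}$, invoke Brouwer's claim that there exists a \emph{canonical proof} of "$\langle\;\rangle$ is $B$-safe", built up from leaf steps (of the form "$s\in B$, hence $s$ is safe"), forward reasoning steps, and backward reasoning steps. For both parts of the theorem, the plan is then to substitute the statement "$s\in C$" for "$s$ is safe" throughout the canonical proof and verify that the resulting tree is a legitimate derivation of the conclusion "$\langle\;\rangle\in C$".

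For part (i) this substitution works immediately: leaf steps are valid because $B\subseteq C$; forward steps are valid by the $\Leftarrow$ direction of the biconditional hypothesis; backward steps are valid by the $\Rightarrow$ direction. This is exactly the passage from "safe" to "supersafe" in the proof of Theorem \ref{T:bft}, with "supersafe" replaced by "$\in C$".

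Part (ii) is the delicate case, since only forward closure of $C$ is available. The plan is to first \emph{normalize} the canonical proof so as to remove all backward steps, and only then perform the substitution. A backward step derives "$s\ast\langle j\rangle$ is safe" from "$s$ is safe"; its conclusion must be used as a premise somewhere further down the tree. Because backward steps strictly lengthen sequences while the root statement refers to the shortest sequence $\langle\;\rangle$, the chain of uses descending from any backward step must eventually enter a forward step; and the conclusion of that forward step is necessarily "$s$ is safe" — a statement whose subproof is already sitting above the original backward step. Splicing that upper subproof directly into the position of the forward step's conclusion eliminates the backward step together with the surrounding forward step and a redundant sibling subproof, while strictly shrinking the tree. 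Iterating until no backward steps remain, we obtain a canonical proof built only from leaves and forward steps, on which the substitution argument from (i) now applies using only $B\subseteq C$ and the forward-closure hypothesis of (ii).

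The main obstacle is making the normalization precise: one must track the chain of uses of a backward step carefully, since it may be composed with further backward steps before reaching the forward step that brings the sequence back to $s$, and one must verify that the tree surgery preserves the validity of all remaining steps. Well-foundedness of the normalization is straightforward — each surgery strictly reduces the size of the finite canonical proof — but a clean presentation requires a careful structural induction on the canonical proof tree.
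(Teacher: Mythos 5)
Your argument is correct. Part (i) is exactly the paper's proof. Part (ii) takes a genuinely different route: the paper forms the upward closure $C^\ast := \{s\in 2^{<\omega} \mid \exists t\sqsubseteq s[t\in C]\}$, notes that $C^\ast$ is automatically monotone and, by the hypothesis of (ii), also inductive --- if $\forall j<2[s\ast\langle j\rangle\in C^\ast]$, then either $\forall j<2[s\ast\langle j\rangle\in C]$, so $s\in C\subseteq C^\ast$, or some $t\sqsubseteq s$ is already in $C$, so $s\in C^\ast$ --- hence $C^\ast$ satisfies the biconditional hypothesis of (i), giving $\langle\;\rangle\in C^\ast$ and thus $\langle\;\rangle\in C$. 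That is a short deduction that uses (i) as a black box and makes no further contact with the canonical proof. Your normalization is also sound, and in fact it establishes something slightly more: it shows directly that backward steps can be eliminated from a canonical proof of $Bar_\mathcal{C}(B)$, which is precisely the remark the paper records as a corollary right after this theorem. You have correctly flagged the essential subtlety: the first forward step encountered on the descent from a backward step with premise ``$s$ is safe'' need not itself conclude ``$s$ is safe''; one must descend to the first forward step whose conclusion has the same length as $s$. That step exists since lengths change by $\pm 1$ along the descent and must reach $0$; its conclusion really is $s$ because the part of the chain strictly above it never dips below length $\mathit{length}(s)+1$, so the prefix of length $\mathit{length}(s)$ is never altered. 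Splicing in the subproof of ``$s$ is safe'' sitting above the backward step then strictly shrinks the finite tree, so the iteration terminates. In sum, the paper's $C^\ast$ device buys brevity and avoids proof-tree surgery entirely; your route is longer but makes the eliminability of backward steps visible at the level of the canonical proof itself.
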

 
  \begin{proof} (i) Define, for each $s$ in $2^{<\omega}$, $s$ is \textit{safe} if and only if  $Bar_{\mathcal{C}\cap s}(B)$. In the canonical proof of: `$\langle\;\rangle$ is safe', replace every statement `$s$ is safe' by `$s \in C$'. The result will be a proof of: `$\langle \;\rangle \in C$'. 
  
  \smallskip (ii) Define $C^\ast:=\{s \in  2^{<\omega}[\exists t\sqsubseteq  s[t\in C]\}$.

  Note: for all $s$ in $2^{<\omega}$, if $s\in C^\ast$, then $\forall j<2[s\ast\langle j\rangle \in C^\ast]$. 
  
  Now assume: $s\in 2^{<\omega}$ and $\forall j<2[s\ast\langle j\rangle \in C^\ast]$.  We may distinguish  two cases. 
  
  \textit{Case 1.} $\forall j<2[ s\ast\langle j \rangle \in C]$. Then $s \in C\subseteq C^\ast$. 
  
  \textit{Case 2.} $\exists t\sqsubseteq s[t\in C]$. Then $s\in C^\ast$. 
  
  We thus see: for all $s$ in $2^{<\omega}$,  $s\in C^\ast$ if and only if  $\forall j<2[\sigma(s\ast\langle j\rangle\in C^\ast]$.
  
  Applying (i), we conclude: $\langle \;\rangle \in C^\ast$, and: $\langle\;\rangle \in C$. 
  \end{proof}
  
  Theorem \ref{T:fanbi}(ii) shows that, in the canonical proof of $Bar_\mathcal{C}(B)$, introduced in the proof of Theorem \ref{T:bft}, the backward steps might have been left out.  They may not be left out in the canonical proof of the {\it Bar Theorem}, the infinitary analog of the Fan Theorem, see Subsection \ref{SS:backward}.    
  
  \subsection{Every function from $[0,1]$ to $\mathcal{R}$ is uniformly continuous}$\;$      
     Brouwer's Continuity Principle implies that every function from $[0,1]$ to $\mathcal{R}$, like every function from $\mathcal{R}$ to $\mathcal{R}$, see  Theorem \ref{T:realfunctionscontinuous}(iii), is  continuous at every point.  Using the Fan Theorem, we now prove that every function from $[0,1]$ to $\mathcal{R}$ is even { continuous {\it uniformly on $[0,1]$}.  This is historically the first application of the Fan Theorem, see  \cite{brouwer27}.  
        
 \smallskip
 In the next Definition, we introduce a function  $\alpha\mapsto c_\alpha$ associating to every $\alpha$ in $\mathcal{C}$ a real number $c_\alpha$ in $[0,1]$.     
  \begin{definition}  
  
  We define a mapping  associating to every $s$ in $2^{<\omega}$ a pair $(a_s, b_s)$ of rationals such that $(a_{\langle\;\rangle}, b_{\langle\;\rangle})=(0,1)$ and for each $s$ in $2^{<\omega}$, \\$(a_{s\ast\langle 0\rangle}, b_{s\ast\langle 0\rangle}) =(a_s, \frac{1}{3}a_s + \frac{2}{3}b_s)$ and $(a_{s\ast\langle 1\rangle}, b_{s\ast\langle 1\rangle}) =( \frac{2}{3}a_s + \frac{1}{3}b_s, b_s)$.
 
 \smallskip
 
 For each $\alpha$ in $\mathcal{C}$ we let $c_\alpha$ be the real $x$ such that, for each $n$, $x(n)=(a_{\overline \alpha n}, b_{\overline \alpha n})$. 
 \end{definition}    
 \begin{theorem}[Uniform-Continuity Theorem]\label{T:uc} Every  function $f$ from $[0,1]$ to $\mathcal{R}$  is  continuous \emph{uniformly on $[0,1]$}, i.e.
       $$\forall p \in \mathbb{N}\exists m\in \mathbb{N}\forall x \in [0,1]  \forall y\in[0,1][|x -y| < \frac{1}{2^m} \rightarrow |f(x) -f(y| <\frac{1}{2^p}].$$ \end{theorem}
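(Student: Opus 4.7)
The plan is to combine the pointwise continuity of $f$ supplied by Theorem \ref{T:realfunctionscontinuous}(iii) with the Fan Theorem, using the surjection $\alpha \mapsto c_\alpha$ from $\mathcal{C}$ onto $[0,1]$ as a bridge. Fix $p$. For each $\alpha \in \mathcal{C}$, pointwise continuity of $f$ at $c_\alpha$ gives some $m'$ such that $|y - c_\alpha| < 1/2^{m'}$ implies $|f(y) - f(c_\alpha)| < 1/2^{p+2}$. Choose $k$ so that $(2/3)^k < 1/2^{m'}$; then for every $\beta \in \mathcal{C}$ with $\overline{\alpha}k \sqsubset \beta$, both $c_\alpha$ and $c_\beta$ lie in the rational interval $[a_{\overline{\alpha}k}, b_{\overline{\alpha}k}]$ of width $(2/3)^k$, so $|f(c_\beta) - f(c_\alpha)| < 1/2^{p+2}$. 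Picking a rational $q$ within $1/2^{p+2}$ of $f(c_\alpha)$, we see $\overline{\alpha}k \in B$, where
\[B := \{s \in 2^{<\omega} \mid \exists q \in \mathbb{Q}\,\forall \beta \in \mathcal{C}[s \sqsubset \beta \rightarrow |f(c_\beta) - q| < 1/2^{p+1}]\}.\]
Thus $B$ is a bar in $\mathcal{C}$, and by Theorem \ref{T:bft} it admits a finite subbar $u$; for each $i < length(u)$ I select a rational witness $q_i$ as in the definition of $B$.

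Let $M := \max_{i < length(u)} length(u(i))$. Since $u$ is a bar in $\mathcal{C}$, every $s \in Bin_M$ has some $u(i) \sqsubseteq s$; pick such an $i(s)$ and set $r_s := q_{i(s)}$. Next I choose $m$ so large that any $x, y \in [0,1]$ with $|x - y| < 1/2^m$ lie together in some $[a_s, b_s]$ with $s \in Bin_M$. Given such $x, y$, pick $\alpha, \beta \in \mathcal{C}$ with $s \sqsubset \alpha$, $s \sqsubset \beta$ and $c_\alpha =_\mathcal{R} x$, $c_\beta =_\mathcal{R} y$; this is possible because $[a_s, b_s]$ is precisely the image under $c$ of $\{\gamma \in \mathcal{C} \mid s \sqsubset \gamma\}$. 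Since $u(i(s)) \sqsubset \alpha$ and $u(i(s)) \sqsubset \beta$, I conclude $|f(x) - r_s| < 1/2^{p+1}$ and $|f(y) - r_s| < 1/2^{p+1}$, hence $|f(x) - f(y)| < 1/2^p$, as desired.

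The main obstacle is the Lebesgue-number step used to pick $m$. One must verify that the cover of $[0,1]$ by the intervals $\{[a_s, b_s] \mid s \in Bin_M\}$, each of width $(2/3)^M$ and overlapping because the children $s \ast \langle 0 \rangle$ and $s \ast \langle 1 \rangle$ of any $s$ share the middle third of $[a_s, b_s]$, admits a Lebesgue number proportional to $(2/3)^M$; a direct geometric computation (comparing the maximum gap between the left endpoints $a_s$, which is $(1/3)(2/3)^{M-1}$, with the interval width $(2/3)^M$) shows that $\delta := (2/3)^M / 4$ works, so any $m$ with $1/2^m < \delta$ is adequate. A secondary intuitionistic subtlety is that the production of a common cell from $x, y$ must be constructive: passing to rational approximations of $x$ and $y$ at precision below $\delta/4$ reduces the question to a finite decidable search among the cells in $Bin_M$, which presents no real difficulty.
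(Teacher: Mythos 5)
Your proof is correct in substance, but it takes a genuinely different route from the paper's. Both arguments start the same way: using Theorem \ref{T:realfunctionscontinuous}(iii) to show that a suitable decidable $B\subseteq 2^{<\omega}$ is a bar in $\mathcal{C}$. The paper, however, then passes to a second set $C$ (``there exists a modulus of uniform continuity on $[a_s,b_s]$''), observes $B\subseteq C$, shows $C$ is closed under the backward step $\forall i<2[s\ast\langle i\rangle\in C]\Rightarrow s\in C$, and applies Bar Induction in $\mathcal{C}$ (Theorem \ref{T:fanbi}(ii)) to get $\langle\;\rangle\in C$. You instead apply the Fan Theorem \ref{T:bft} in its ``finite subbar'' form, push the finitely many bar elements down to a common depth $M$, and then argue globally via a Lebesgue number for the cover $\{[a_s,b_s]\mid s\in \mathit{Bin}_M\}$.

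The paper's inductive formulation buys two things that your route has to work for explicitly. First, the inductive step needs only the \emph{local} geometric fact that two points of $[a_s,b_s]$ within distance $\tfrac{1}{3}(\tfrac{2}{3})^{\mathrm{length}(s)}$ lie together in one of the two children (and the conclusion being established at each node is a universally quantified real statement, so no surjection of $\mathcal{C}$ onto $[0,1]$ is needed). Second, it never has to locate, for a given pair $x,y$, a common cell of $\mathit{Bin}_M$ containing both; the induction handles the nesting automatically. Your route has to prove the global Lebesgue-number bound (your $\delta=(2/3)^M/4$ is in fact generous; $\tfrac{1}{2}(2/3)^M$ already works) and, more delicately, has to produce a common $s\in\mathit{Bin}_M$ constructively. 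Your remark that this ``reduces to a finite decidable search among the cells'' glosses over a genuine intuitionistic point: $a_s\le x\le b_s$ is a negative statement about a real and is not decidable, so there is no literal finite search. What does work is a joint bisection: at each depth $n<M$, given a common $s$ of length $n$, apply co-transitivity to the midpoint $(x+y)/2$ against the two rationals $\tfrac{2}{3}a_s+\tfrac{1}{3}b_s+\tfrac{1}{2^{m+1}}$ and $\tfrac{1}{3}a_s+\tfrac{2}{3}b_s-\tfrac{1}{2^{m+1}}$ (which are distinct precisely because $\tfrac{1}{2^m}<\tfrac{1}{3}(b_s-a_s)$) to decide which child contains both. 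This is the same geometric fact the paper uses locally in its inductive step, just iterated $M$ times. Similarly, your appeal to ``$[a_s,b_s]$ is precisely the image of $\mathcal{C}\cap s$'' needs the same bisection argument to invert the map $\alpha\mapsto c_\alpha$ constructively; this is fine, but it is a piece of machinery the paper's formulation avoids altogether. In short: same Fan-theoretic engine, but the paper packages it via Bar Induction so that the Lebesgue-number and surjection bookkeeping disappears into the inductive step, whereas your version showcases the Fan Theorem more nakedly as a compactness statement at the cost of having to handle that bookkeeping by hand.
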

       
       \begin{proof} Let $f$ be a function from $[0,1]$ to $\mathcal{R}$ and let $p$  be given. We want to find $m$ such that 
 $\forall x \in [0,1]  \forall y\in[0,1][|x -y| < \frac{1}{2^m} \rightarrow |f(x) -f(y| <\frac{1}{2^p}].$ 
 
  \smallskip     
Let  $B$ be the set of all $s$ in  $2^{<\omega}$ such that, for all $x,y$ in $[a_s, b_s]$, $|f(x)-f(y)|<\frac{1}{2^p}$. 
       
        We first prove that $B$ is a bar in $\mathcal{C}$, i.e.  $Bar_\mathcal{C}(B)$. 
        
       Let $\alpha$ in $\mathcal{C}$ be given. As $f$ is continuous at $c_\alpha$, see Theorem \ref{T:realfunctionscontinuous}(iii), find $m$ such that $\forall y\in[0,1][|c_\alpha -y| < \frac{1}{2^m} \rightarrow |f(c_\alpha) -f(y| <\frac{1}{2^{p+1}}]$. Find  $n$ such that $b_{\overline \alpha n}-a_{\overline \alpha n}<\frac{1}{2^m}$ and note: $\overline \alpha n \in B$. 
       
       We thus see that every $\alpha$ in $\mathcal{C}$ has an initial part in $B$. 
       
       \smallskip
      
        Now let  $C$ be the set of all $s$ in  $2^{<\omega}$ such that, for some $m$, \\for all $x,y$ in $[a_s, b_s]$, if $|x-y|<\frac{1}{2^m}$, then $|f(x)-f(y)|<\frac{1}{2^p}$.

        \smallskip
        Note: $B\subseteq C$.
        
        \smallskip

         Let $s$ in $2^{<\omega}$ be given such that $\forall i<2[s\ast\langle i \rangle \in C]$. We are going to prove: $s\in C$. 
         
           First find $m_0$ such that, for all $x,y$ in $(a_{s\ast\langle 0\rangle}, b_{s\ast\langle 0\rangle}) =(a_s, \frac{1}{3}a_s + \frac{2}{3}b_s)$, if $|x-y|<\frac{1}{2^{m_0}}$, then $|f(x)-f(y)|<\frac{1}{2^p}$. Then find $m_1$ such that, for all $x, y$ in $(a_{s\ast\langle 1\rangle}, b_{s\ast\langle 1\rangle}) =( \frac{2}{3}a_s + \frac{1}{3}b_s, b_s)$, if $|x-y|<\frac{1}{2^{m_1}}$, then $|f(x)-f(y)|<\frac{1}{2^p}$.
           
           Now find $n:=length(s)$. 
           
           Note: $b_s-a_s=(\frac{2}{3})^n$, and, for all $x, y$ in $[a_s, b_s]$, if $|x -y|<\frac{1}{3}(\frac{2}{3})^n$, then either $x, y$ both are in $ [a_s, \frac{1}{3}a_s + \frac{2}{3}b_s]$ or $x, y$ both are in $[ \frac{2}{3}a_s + \frac{1}{3}b_s, b_s]$.

        Find $m$ such that $m\ge m_0$ and $m\ge m_1$ and $\frac{1}{2^m} < \frac{1}{3}(\frac{2}{3})^n$ and note: for all $x, y$ in $[a_s, b_s]$, if $|x-y|<\frac{1}{2^{m}}$, then $|f(x)-f(y)|<\frac{1}{2^p}$. We thus see: $s\in C$. 
        
        Clearly, for all $s$ in $Bin$, if $\forall i<2[s\ast\langle i \rangle \in C]$, then $s\in C$.

        \smallskip
       Using Theorem \ref{T:fanbi}(ii),  conclude: $\langle\;\rangle \in C$, that is:

           $\exists m \forall x \in [0,1]\forall y \in [0,1][ |
        x -y|< \frac{1}{2^m} \rightarrow |f(x) - f(y)| < \frac{1}{2^n}].$
          \end{proof}
 
 \subsection{The failure of the Fan Theorem in computable analysis}\label{SS:failurefan}         
          In the next Theorem, we use some notations introduced in  Subsection \ref{SS;haltingproblem}.
        \begin{theorem}[Kleene's Alternative\footnote{Kleene discovered this theorem in 1950, see \cite{kleene52} and \cite[Lemma 9.8]{kleenevesley65}.}]\label{T:ka} There exists $B^\ast\subseteq\mathbb{N}$ such that \begin{enumerate}[\upshape (i)]\item Every \emph{computable} $\alpha$ in $\mathcal{C}$ has an initial part in $B$, i.e.  $\exists s\in B^\ast[s\sqsubset\alpha]$. \item Every finite subset of $B$ positively fails to be a bar in the set \\$\{\alpha \in \mathcal{C}\mid \alpha \; is\; computable\}$, i.e. \\for all $u,p$, if $length(u)=p>0$ and $\forall i<p[u(i)\in B^\ast]$, \\then there exists a computable $\alpha$ in $\mathcal{C}$ such that $\forall i<p[u(i)\perp \alpha]$. \item $B^\ast$ is an algorithmically decidable subset of $\mathbb{N}$. \end{enumerate}\end{theorem}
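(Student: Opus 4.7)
The plan is to construct $B^\ast$ so that membership records a failure to separate two recursively inseparable computably enumerable sets, then to derive (i) from the non-existence of a \emph{computable} separator and (ii) from the classical existence of a necessarily non-computable separator. Concretely, set
\[B^\ast:=\{s\in Bin\mid \exists e<length(s)\;\exists z<length(s)[T(e,e,z)\;\wedge\;U(z)<2\;\wedge\;U(z)\neq s(e)]\}.\]
Claim (iii) is immediate: the defining condition is a bounded existential quantification over the decidable predicate $T$ combined with decidable arithmetical comparisons, so $B^\ast$ is algorithmically decidable.

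The engine of the proof is the fact that the disjoint computably enumerable sets
\[A_0:=\{e\mid\exists z[T(e,e,z)\;\wedge\;U(z)=0]\}\quad\text{and}\quad A_1:=\{e\mid\exists z[T(e,e,z)\;\wedge\;U(z)=1]\}\]
are \emph{recursively inseparable}. Indeed, if a computable $\chi\in\mathcal{C}$ satisfied $\chi(e)=0$ for all $e\in A_0$ and $\chi(e)=1$ for all $e\in A_1$, then one would take an index $e_0$ for the computable function $n\mapsto 1-\chi(n)$, evaluate $\varphi_{e_0}(e_0)=1-\chi(e_0)\in\{0,1\}$, conclude $e_0\in A_{1-\chi(e_0)}$, and the separating property would force $\chi(e_0)=1-\chi(e_0)$, absurd.

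For (i), let $\alpha\in\mathcal{C}$ be computable. Since $\alpha$ is not a separator of $A_0$ and $A_1$, there exists $e$ with either $e\in A_0$ and $\alpha(e)=1$, or $e\in A_1$ and $\alpha(e)=0$. In both cases one finds $z$ with $T(e,e,z)$, $U(z)<2$ and $U(z)\neq\alpha(e)$, and then $\overline{\alpha}N\in B^\ast$ for any $N>\max(e,z)$.

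For (ii), given $u(0),\ldots,u(p-1)$ in $B^\ast$, set $N:=1+\max_{i<p}length(u(i))$ and, by finite search in the decidable finite set $Bin_N$, locate $s\in Bin_N$ with $u(i)\not\sqsubseteq s$ for every $i<p$; then $\alpha:=s\ast\underline{0}$ is a computable element of $\mathcal{C}$ satisfying $u(i)\perp\alpha$ for all $i<p$. To guarantee that the search succeeds, pick any classical separator $\chi$ of $A_0$ and $A_1$: the case analysis dual to the inseparability argument shows that no initial segment of $\chi$ belongs to $B^\ast$, so in particular no $u(i)$ is an initial segment of $\chi$ and $\overline{\chi}N\in Bin_N$ witnesses the existence of a valid $s$. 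The main obstacle is precisely this last step: the computable $\alpha$ itself is produced explicitly by a mechanical search, but the success of the search rests on the classical, non-constructive existence of a separator of $A_0$ and $A_1$; this non-constructive ingredient is essential rather than an artefact of the proof, for it expresses the very failure of the Fan Theorem in computable analysis that the theorem asserts.
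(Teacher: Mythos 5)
Your construction of $B^\ast$ from the recursively inseparable sets $A_0, A_1$ is a genuinely different route from the paper's, which diagonalizes directly against the sequence $\varphi_e$. Parts (iii) and (i) are fine (in (i) one should note that the reductio can be read constructively: from a computable $\alpha\in\mathcal{C}$ with a known index, an index $e_0$ for $n\mapsto 1-\alpha(n)$ is computed explicitly, the computation $T(e_0,e_0,z)$ terminates because $\alpha$ is total, $U(z)=1-\alpha(e_0)<2$, and $U(z)\neq\alpha(e_0)$, so a witnessing $e,z$ are produced, not merely shown to exist).

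In part (ii) there is a genuine gap, and moreover your diagnosis of it is wrong. You produce $\alpha$ by a finite search through $Bin_N$, but you guarantee the success of that search only by invoking a classical, non-computable separator $\chi$ of $A_0$ and $A_1$. You then assert that this non-constructive step is ``essential''. It is not: the theorem, as stated, is a theorem of intuitionistic mathematics and the existential in (ii) must be realized constructively; the fact that the \emph{Fan Theorem fails} in computable analysis is the content of what is being proved, not a license to argue classically. The step is also unnecessary in your own framework. Each $u(i)\in B^\ast$ comes with explicit $e_i<length\bigl(u(i)\bigr)$ and $z_i<length\bigl(u(i)\bigr)$ such that $T(e_i,e_i,z_i)$, $U(z_i)<2$ and $U(z_i)\neq u(i)(e_i)$. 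Since the code $z$ with $T(e,e,z)$ is unique (or at least all such $z$ give the same outcome $U(z)=\varphi_e(e)$), the assignment $e_i\mapsto U(z_i)$ is single-valued even when some $e_i$ coincide. Now define $s\in Bin_N$ (with $N:=1+\max_{i<p}length\bigl(u(i)\bigr)$) by $s(e):=U(z_i)$ if $e=e_i$ for some $i<p$ and $s(e):=0$ otherwise. For every $i<p$, $u(i)$ and $s$ disagree at position $e_i<length\bigl(u(i)\bigr)<N$, so $u(i)\not\sqsubseteq s$, and $\alpha:=s\ast\underline 0$ is a computable element of $\mathcal{C}$ with $u(i)\perp\alpha$ for all $i<p$. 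With this replacement, no classical separator is needed and your proof becomes a fully constructive alternative to the paper's diagonal argument, where the paper instead sorts the $u(i)$ by length, uses that for its $B$ each length $e+1$ admits at most one element, and sets $\alpha(n):=1-u(n)(n)$.
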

 \begin{proof} First, let $B$ be the set of all $s$ in $Bin$  such that, for some $e$, $length(s)=e+1$ and $\forall i\le e\exists z[T(e,i,z) \;\wedge\;U(z) =s(i)].$ 
 
 $ \langle\;\rangle \notin B$ and,  for every $e$, there is at most one $s$ in $B$ such that $length(s)=e+1$.

Let $\alpha$ be a computable element of $\mathcal{C}$.  Find $e$ such that $\alpha = \varphi_e$. \\Note:  $\overline \alpha (e+1)=\overline{\varphi_e}(e+1)\in B$. 
 
 \smallskip
 
 Let $u,p$ be given such that $length(u)=p>0$ and $\forall n<p [u(n) \in B]$. \\We may assume: $0<length\bigl(u(0)\bigr)< length\bigl(u(1)\bigr)<\ldots<length\bigl(u(p-1)\bigr)$ and, therefore, $\forall n<p[\mathit{length}\bigl(u(n)\bigr) >n]$. Define $\alpha$ such that, for each  $n<p$,  $\alpha(n) = 1-\bigl(u(n)\bigr)(n)$, and, for each $n\ge p$,  $\alpha(n) =0$.  Note: $\alpha$ is computable and $\forall n<p[u(n)\perp \alpha]$.
 
  \medskip
 Unfortunately, $B$ is not an algorithmically decidable subset of $\mathbb{N}$.
 
 We therefore introduce another subset of $\mathbb{N}$.
 
   Let $B^\ast$ be the set of all $s$ in  $Bin$ such that, for some $e< \mathit{length}(s)$, \\$\forall i\le e\exists z<\mathit{length}(s)[T(e,i,z) \;\wedge\;U(j) =s(i)]$, and, therefore, $\overline s (e+1) \in B$. 
  
    Note that $B^\ast$ is an algorithmically decidable subset of $\mathbb{N}$.
   
 Let $\alpha$ be a computable element of $\mathcal{C}$. Find $e$ such that $\alpha=\varphi_e$. Find $n>e$ such that    $\forall i\le e\exists z<n[T(e,i,z) \;\wedge\;U(z) =s(i)]$ and conclude: $\overline \alpha n \in B^\ast$. 
   
  We thus see that every computable $\alpha$ in $\mathcal{C}$ has an initial part in $B^\ast$. 
   
   Let $u,p$ be given such that $length(u)=p>0$ and $\forall n<p [u(n) \in B^\ast]$. Find $v$ such that $length(v)=p$ and $\forall n<p[v(n)\sqsubseteq u(n)\;\wedge\; v(n)\in B]$. Find a computable $\alpha$ in $\mathcal{C}$ such that $\forall n<p[v(n)\perp\alpha]$ and conclude $\forall n<p[u(n)\perp\alpha]$. 
   
   We thus see that every finite subset of $B^\ast$ positively fails to be a bar in \\$\{\alpha \in \mathcal{C}\mid \alpha\;is\;computable\}$. 
     \end{proof}
        
The subject of \textit{computable analysis} starts from the assumption that every $\alpha$ in $\mathcal{N}$ is given by an algorithm, i.e., if one uses \textit{Church's Thesis}, by a Turing-algorithm. One may study this subject from an intuitionistic point of view. As appears from Theorem \ref{T:ka}, computable analysis   is dramatically different from intuitionistic analysis. For instance, there exists a real function from $[0,1]$ to $\mathcal{R}$ that is everywhere continuous but positively unbounded. Many more such results may be found in \cite{veldman2011b}.  
   \section{The Determinacy Theorem as an equivalent of the Fan Theorem}
The Uniform-Continuity Theorem, Theorem  \ref{T:uc}, is the first application of the Fan Theorem, and may be called the goal for which it was devised. In this Section we want to introduce to the reader to a second and more recent application, an application in the theory of games.

 In the first two Subsections we consider two kinds of two-move-games for Players $I,II$ and define when such games are {\it constructively determinate from the viewpoint of Player $I$}. Games from the first Subsection may fail to have this property but we will see that the  Fan Theorem proves that the games from the second Subsection always have it. In the third Subsection we prove that, conversely, this result implies an important case of the Fan Theorem.   In the fourth Subsection we introduce the intuitionistic Determinacy Theorem. \subsection{Games in $2\times \omega$}\label{SS:nondet}
 
 For every $C\subseteq 2\times \omega$, we introduce a game $\mathcal{G}_{2\times \omega}(C)$.  \\There are two players, Player $I$ and Player $II$. Every play  goes a follows. Player $I$ chooses $i$ in $\{0,1\}$, thereafter Player $II$ chooses $n$ in $\omega$ and the play is finished. Player $I$ wins the play if and only if $\langle i, n\rangle \in C$. Player $II$ tries to prevent that Player $I$ wins the play. 
 \\Clearly, Player $I$ has a \textit{winning first move} if and only if $\exists i\forall n[\langle i,n\rangle \in C]$. A \textit{strategy} for Player $II$, on the other hand,  is a pair of numbers $\langle p_0, p_1\rangle$, and the strategy is \textit{winning} if $\forall  i<2[\langle i, p_i\rangle \notin C]$. 
 
 The game $\mathcal{G}_{2\times\omega}(C)$ is called  \textit{determinate} if and only if \textit{either} Player $I$ has a winning first move \textit{or} Player $II$ has a winning strategy. As one may conjecture, it may happen that this decision can not be taken in a constructive way. We will see this in a moment.

 We  now define: the game $\mathcal{G}_{2\times\omega}(C)$ is \textit{determinate from the viewpoint of Player $I$} if and only if:\begin{quote}  \textit{If Player $II$ does not have a winning strategy in the constructively strong sense: $\forall p_0\forall p_1[\langle 0, p_0\rangle \in C\;\vee\;\langle 1, p_1\rangle \in C]$, then  Player $I$ has a winning first move, i.e. $\exists i<2\forall n[\langle i, n\rangle \in C]$}\end{quote} and ask ourselves the question: is the game $\mathcal{G}_{\omega\times 2}(C)$ always determinate from the viewpoint of Player $I$?
 
 \smallskip The following example shows that, sometimes, it is not.\\ Define $C:=\{\langle i, n\rangle \mid k_{99}\le n\rightarrow \exists l[k_{99}=2l+i]\}$. \\Let $p_0, p_1$ be given. If $k_{99}\le p_0$, one may find $i<2$ and $l$  such that $k_{99}=2l+i$ and $\langle i, p_i\rangle \in C$. If $ p_0< k_{99}$, then $\langle 0, p_0\rangle\in C$. We thus see  Player $I$ has an answer to any given strategy of Player $II$. \\ On the other hand, if Player $I$ has a winning first move, then $\exists i<2\forall n[\langle i, n\rangle \in C]$ and $\exists i<2[\exists j [j=k_{99}]\rightarrow \exists n[2n+i=k_{99}]]$, i.e. $\neg \exists n[2n=k_{99}]\;\vee\;\neg\exists n[2n+1=k_{99}]$, a \textit{reckless} or \textit{hardy} statement. 
 
 Note that the statement:  `the game $\mathcal{G}_{2\times\omega}(C)$ is determinate' is also reckless, as, for every $C\subseteq 2\times\omega$, if $\mathcal{G}_{2\times \omega}(C)$ is determinate, then $\mathcal{G}_{2\times \omega}(C)$ is determinate from the viewpoint of Player $II$. 
 
 \subsection{Games in $\omega\times 2$}\label{SS:gamesbyfan}\footnote{See \cite[\S 4]{veldman82}.} For every $C\subseteq\omega\times 2$, we introduce a game $\mathcal{G}_{ \omega\times 2}(C)$.  \\There are again two players, Player $I$ and Player $II$. Every play  goes a follows. Player $I$ chooses $n$ in $\omega$, thereafter Player $II$ chooses $i$ in $ \{0,1\}$ and the play is finished. Player $I$ wins the play if and only if $\langle n,i\rangle \in C$. Player $II$ tries to prevent that Player $I$ wins the play. 
 Clearly, Player $I$ has a \textit{winning first move} if and only if $\exists n\forall i<2[\langle n,i\rangle \in C]$. A \textit{strategy} for Player $II$ is an element $\tau$ of $\mathcal{C}$ and  the strategy $\tau$ is \textit{winning} if and only if $\forall n[\langle n, \tau(n)\rangle \notin C]$. 
 
 The game $\mathcal{G}_{\omega\times 2}(C)$ is called  \textit{determinate} if and only if \textit{either} Player $I$ has a winning first move \textit{or} Player $II$ has a winning strategy.
 
 Considering the example $C:=\{\langle n, i\rangle \in \omega \times 2\mid n=k_{99}]$, we see that it may happen that the statement: `$\mathcal{G}_{\omega\times 2}(C)$ is determinate' is a reckless one. 
 
  We now define: the game $\mathcal{G}_{\omega\times 2}(C)$ is \textit{determinate from the viewpoint of Player $I$} if and only if \begin{quote} \textit{If Player $II$ does not have a winning strategy in the strong sense: $\forall \tau\in\mathcal{C}\exists n[\langle n, \tau(n)\rangle \in C]$, then  Player $I$ has a winning first move, i.e. $\exists n\forall i<2[\langle n, i\rangle \in C]$.}\end{quote}
  
  The Fan Theorem proves that, for  every $C\subseteq \omega \times 2$, the game $\mathcal{G}_{\omega\times2}(C)$ is determinate from the viewpoint of Player $I$.
   
  For assume $\forall \tau\in\mathcal{C}\exists n[\langle n, \tau(n)\rangle \in C]$. \\Define $B:=\bigcup_m\{ t\in Bin_m \mid \exists n<m)[\langle n, t(n)\rangle \in C]\}$ and note: $B$ is a bar in $\mathcal{C}$. Applying the Fan Theorem, find $u$ such that $\forall i<length(u)[u(i)\in B]$ and $\forall \tau \in \mathcal{C}\exists  i<length(u)[u(i)\sqsubset\tau]$. Find $m$ such that $\forall i<m[length(u(i))<m]$ and conclude:  $\forall t \in Bin_m\exists n<m[\langle n, t(n)\rangle \in C]$. \\We now prove, by backwards induction: \\for each $j\le m$, either $\forall t \in Bin_j \exists n<j[\langle n , t(n)\rangle \in C]$ or $\exists n\forall i<2[\langle n, i\rangle \in C]$. \\This clearly holds if $j=m$.\\ Now assume $j+1\le m$ and $\forall t \in Bin_{j+1} \exists n<j+1[\langle n, t(n)\rangle \in C]$. For every $t$ in $Bin_j$, one may consider $t\ast \langle 0\rangle$ and conclude: $\exists n<j[\langle n, t(n)\rangle \in C]$ or $\langle j, 0\rangle \in C$. Therefore: either $\langle j, 0\rangle \in C$ or $\forall t \in Bin_j\exists n<j[\langle n, t(n)\rangle \in C]$. For similar reasons,  either $\langle j, 1\rangle \in C$ or $\forall t \in Bin_j\exists n<j[\langle n, t(n)\rangle \in C]$.  Conclude: either: $\forall i<2[\langle j, i\rangle \in C]$ and $\exists n\forall i<2[\langle n , i\rangle \in C]$ or: $\forall t \in Bin_j\exists n<j[\langle n, t(n)\rangle \in C]$. Repeating this step $m$ times we find the conclusion: $\exists n\forall i<2[\langle n, i\rangle \in C]$.

  \subsection{Recovering the Fan Theorem}\label{SS:recft} Let us consider the result of the last subsection:\begin{quote} $(\#)$ For every $C\subseteq \omega \times 2$, \\if $\forall \tau\in\mathcal{C}\exists n[\langle n, \tau(n)\rangle \in C]$, then $\exists n[\langle n, 0\rangle \in C\;\wedge\;\langle n, 1\rangle \in C]$.\end{quote} We prove that this statement implies an important case of the Fan Theorem. The Fan Theorem is the statement that every $B\subseteq \mathbb{N}$ that is  a bar in $\mathcal{C}$ has a finite subset that is a bar in $\mathcal{C}$. We prove that $\#$ implies this statement for the case that $B$ is a \textit{decidable} subset of $\mathbb{N}$. 
     
    Assume $(\#)$ and  let $B\subseteq \mathbb{N}$ be given such that $Bar_\mathcal{C}(B)$ and \textit{one may decide, for each $n$, $n\in B$ or $n\notin B$.} For all $n$, we define: $B_n:=\{s\in B\mid s<n\}$. \\We want to prove the statement $QED:=\exists n[Bar_\mathcal{C}(B_n)]$\footnote{We read $QED$ not in its usual sense: `\textit{quod erat demonstrandum, what had to be proven}' but as: `\textit{quod est demonstrandum, what has to be proven}'.}. \\We let $C$ be the set of all pairs $\langle s, i\rangle$ such that \textit{either}: $QED$ \textit{or}: $Bar_{\mathcal{C}\cap s\ast\langle i \rangle}(B_n)$ and not $Bar_{\mathcal{C}\cap s\ast\langle 1-i \rangle}(B_n)$. Note that, for each $s$ in $Bin$, if both $s\ast\langle 0\rangle$ and $s\ast\langle 1\rangle$ are in $C$, then $QED$. Also note that $C$, like $B$, is a decidable subset of $\mathbb{N}$. \\Let $\tau$ in $\mathcal{C}$ be given. Find $\alpha$ in $\mathcal{C}$ such that $\forall i[\alpha(i)=\tau(\overline\alpha i)]$. Find $n$ such that $\overline \alpha n \in B$ and define $m:=\overline \alpha n +1$. Note: $Bar_{C\cap \overline \alpha n}(B_m)$. \\Note that, for each $i$, if $i+1\le n$ and $Bar_{\mathcal{C}\cap \overline\alpha(i+1)}(B_m)$, then \textit{either}: $Bar_{\mathcal{C}\cap \overline\alpha(i)}(B_m)$ \textit{or}: $\neg Bar_{\mathcal{C}\cap \overline\alpha(i)\ast\langle 1-\alpha(i)\rangle}(B_m)$ and  $\langle\overline\alpha i, \alpha(i)\rangle \in C$. Using  backwards induction, \\starting from the fact: $Bar_{\mathcal{C}\cap\overline\alpha n}(B_m)$, we prove:\\ for each $i\le n$, either $Bar_{\mathcal{C}\cap\overline\alpha i}(B_m)$ or $\exists j< n[\langle\overline\alpha j, \alpha(j)\rangle \in C]$. \\Taking $i=0$, we find: \textit{either} $Bar_\mathcal{C}(B_m)$ \textit{or} $\exists j[\langle\overline \alpha j, \alpha(j)\rangle \in C]$, i.e.  \textit{either}  $QED$ \textit{or} $\exists j[\langle\overline \alpha j, \tau(\overline \alpha j)\rangle \in C]$. In both cases, one may conclude: $\exists s[\langle s, \tau(s)\rangle \in C]$.

     We thus see: $\forall \tau \in \mathcal{C}\exists s[\langle s, \tau(s)\rangle \in C]$. Using $(\#)$, we conclude: \\$\exists s[s\ast\langle 0 \rangle \in C \;\wedge\;s\ast\langle 1 \rangle \in C]$, and, therefore: $QED$, i.e. $\exists n[Bar_\mathcal{C}(B_n)]$. 
     
     This shows that every decidable subset  $B$ of $\mathbb{N}$ that is a bar in $\mathcal{C}$ has a finite subset that is a bar in $\mathcal{C}$.
     \subsection{The Intuitionistic Determinacy Theorem} $\;$
     
     We now study games with infinitely many moves rather than two moves only.  \begin{definition} $(\omega\times 2)^\omega:=\{\alpha\mid\forall n[\alpha(2n+1)<2]\}$.
     
     \smallskip For each $\sigma$, for each $\alpha \in (\omega\times 2)^\omega$, $\alpha \in_I\sigma:= \forall n[\alpha(2n)=\sigma\bigl(\overline \alpha(2n)\bigr)]$
     
     \smallskip For each $\tau$ in $\mathcal{C}$, for each $\alpha$ in $(\omega\times 2)^\omega$,  $\alpha\in_{II}\tau:= \forall n[\alpha(2n+1)=\tau\bigl(\overline \alpha(2n+1)\bigr)]$. \end{definition}
     
     For every $\mathcal{X}\subseteq (\omega\times 2)^\omega$, we introduce a game $\mathcal{G}_{(\omega\times 2)^\omega}(\mathcal{X})$. There are two players, Player  $I$ and Player $II$. Every play takes infinitely many moves and goes as follows. Player $I$ chooses $n_0$ in $\omega$, thereafter Player $II$ chooses $i_0$ in $2=\{0,1\}$, then Player $I$ chooses $n_1$ in $\omega$ and, thereafter, Player $II$ chooses $i_1$ in $2$, and so on. Player $I$ wins the play if the infinite sequence $n_0, i_0, n_1, i_1,\ldots$ is in $\mathcal{X}$. Player $II$ tries to prevent that Player $I$ wins the game.
     
     \begin{theorem}[Intuitionistic Determinacy Theorem]\label{T:determinacy} For all $\mathcal{X}\subseteq (\omega\times 2)^\omega$, if $\forall \tau \in \mathcal{C}\exists \alpha \in (\omega\times 2)^\omega[\alpha \in_{II} \tau \;\wedge\; \alpha \in \mathcal{X}]$, then $\exists \sigma\forall \alpha\in (\omega\times 2)^\omega[\alpha \in _I \sigma\rightarrow \alpha \in \mathcal{X}]$.  \end{theorem}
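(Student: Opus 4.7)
The plan is to extend the Fan-Theorem-based argument from Subsection 14.2 (for games in $\omega \times 2$) to the infinite-move setting, by iterating it along the game tree to construct the winning strategy $\sigma$ level by level.

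For each even-length position $p \in (\omega\times 2)^{<\omega}$, I call $p$ \emph{tenable} if $\forall \tau \in \mathcal{C}\,\exists \alpha\,[p \sqsubset \alpha \;\wedge\; \alpha \in_{II} \tau \;\wedge\; \alpha \in \mathcal{X}]$; the hypothesis of the theorem is precisely that $\langle\rangle$ is tenable. The first task is a one-step lemma: if $p$ is tenable, then there exists $n \in \omega$ such that $p*\langle n, 0\rangle$ and $p*\langle n, 1\rangle$ are both tenable. This is obtained by localising the argument of Subsection 14.2 to position $p$: for each $\tau \in \mathcal{C}$, Brouwer's Continuity Principle ensures that the Player-I response $\alpha(\mathit{length}(p))$ in the witness $\alpha$ depends on only a finite piece of $\tau$; the Fan Theorem then gives a uniform finite bound, and the backwards induction used in Subsection 14.2 produces a single $n$ surviving both of Player II's possible replies at the next odd position.

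Granted the one-step lemma, $\sigma$ is defined recursively: starting from the tenable root $\langle\rangle$, set $\sigma(p)$ to any witness $n$ the lemma provides, so that $p*\langle \sigma(p), 0\rangle$ and $p*\langle \sigma(p), 1\rangle$ remain tenable. A straightforward induction then shows that every initial segment $\overline\alpha(2k)$ of any play $\alpha \in_I \sigma$ is again tenable.

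The step I expect to be the main obstacle — the step that genuinely leaves the Fan-Theorem territory of Subsection 14.2 — is the last one: concluding $\alpha \in \mathcal{X}$ from the tenability of every finite prefix of $\alpha$. Tenability of $\overline\alpha(2k)$ only guarantees that \emph{some} extension of it lies in $\mathcal{X}$; it does not yet pin down $\alpha$ itself. To bridge this gap I would apply $\mathbf{BCP}$ to the hypothesis to extract a continuous ``super-strategy'' $F:\mathcal{C}\to(\omega\times 2)^\omega$ with $F(\tau)\in_{II}\tau$ and $F(\tau)\in\mathcal{X}$, and then argue, by a bar-induction principle of the kind formulated in Section~15 as Brouwer's Thesis on Bars in $\mathcal{N}$, that the choices of $\sigma$ can be arranged so that every play $\alpha \in_I \sigma$ coincides with $F(\tau_\alpha)$, for $\tau_\alpha \in \mathcal{C}$ read off from the odd-indexed moves of $\alpha$, whence $\alpha \in \mathcal{X}$. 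Verifying that these bar-theoretic ingredients genuinely deliver this coincidence for an arbitrary $\mathcal{X}$ is the technically delicate part of the proof.
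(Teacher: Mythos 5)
The paper does not actually prove Theorem~\ref{T:determinacy}: it states explicitly ``We will not give the proof of Theorem~\ref{T:determinacy}'' and refers to \cite[Chapter 16]{veldman1981}, \cite{veldman2009} and \cite[Section 9]{veldman2011d}. So there is no in-paper proof against which to compare. Evaluating your proposal on its own terms, there are two genuine gaps.

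First, the one-step lemma is not obtained by ``localising the argument of Subsection~\ref{SS:gamesbyfan}.'' In that subsection the target predicate $\langle n,i\rangle \in C$ is a fixed two-place relation with no further quantifier over $\tau$, so the Fan Theorem plus backwards induction lets you swap $\forall\tau\exists n$ into $\exists n\forall i<2$. Here, by contrast, the predicate ``$p\ast\langle n,i\rangle$ is tenable'' is itself a $\forall\tau'\exists\alpha$ statement. From tenability of $p$ you obtain, for each $\tau$ consistent with $p$, a witnessing play $\alpha_\tau$ with first new move $n(\tau)=\alpha_\tau(\mathit{length}(p))$; but to conclude that $p\ast\langle n_0,0\rangle$ is tenable for a \emph{fixed} $n_0$, you must supply a witness for \emph{every} $\tau'$ with $\tau'(p\ast\langle n_0\rangle)=0$, including those with $n(\tau')\neq n_0$, for which $\alpha_{\tau'}$ does not extend $p\ast\langle n_0,0\rangle$. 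This quantifier mismatch is not handled by the two-move backwards induction; resolving it requires a finer invariant than tenability (something that tracks the clopen piece of $\mathcal{C}$ on which the first move is constant), i.e. a genuinely different recursion than the one you set up.

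Second, the final step is, as you say yourself, unresolved — and not just technically delicate but structurally in tension with the rest of the proposal. You first fix $\sigma$ by choosing, at each tenable $p$, some $n$ supplied by the one-step lemma; you then say that ``the choices of $\sigma$ can be arranged'' so that every $\alpha\in_I\sigma$ coincides with $F(\tau_\alpha)$. But $\sigma$ has already been fixed, and there is no reason the tenability-based choices line up with $F$. For arbitrary $\mathcal{X}$ (no openness or closedness assumed), tenability of all finite prefixes of $\alpha$ gives nothing about $\alpha$ itself, so the tenability scaffolding cannot by itself produce $\alpha\in\mathcal{X}$. The correct shape of the argument is to define $\sigma$ \emph{directly} from a continuous choice function $F:\mathcal{C}\to(\omega\times 2)^\omega$, using the Fan Theorem to control its modulus of continuity so that every play consistent with $\sigma$ is literally of the form $F(\tau)$; the tenability invariant then becomes superfluous. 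Note also that extracting such an $F$ from the hypothesis requires a function-valued continuity axiom (continuous choice of $\alpha$'s), strictly stronger than Axiom~\ref{A:bcp}, which only yields continuous choice of natural numbers; this strengthening is standard but should be flagged since the paper does not state it.
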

     
     The Theorem is good news for Player $I$. Suppose $\mathcal{X}\subseteq \mathcal (\omega\times 2)^\omega$ is such that Player $I$  is able, once Player $II$ has told her the strategy $\tau$ she intends to follow, to find a play following $\tau$  that is in $\mathcal{X}$. She then may devise a strategy $\sigma$, such that every play following $\sigma$ is in $\mathcal{X}$.   Using $\sigma$, Player $I$ can do without information about the strategy followed by Player $II$. 
     
     We will not give the proof of Theorem \ref{T:determinacy}\footnote{See \cite[Chapter 16]{veldman1981}, \cite{veldman2009} and \cite[Section 9]{veldman2011d}.}. As in Subsection \ref{SS:gamesbyfan},  the Fan Theorem plays a key r\^ole in the argument. From Subsection \ref{SS:nondet}, one may see  it is crucial for Theorem \ref{T:determinacy} that, in $(\omega\times 2)^\omega$, Player $II$ has, at each one of her moves,  only  2 choices. The theorem extends to the case that Player $II$ has, at each one of her moves, finitely many choices.  
     
  \section{Brouwer's Thesis}
  
  \subsection{The Principle of Bar Induction}
   
   The key point in the proof of the next Theorem is a philosophical assumption\footnote{The theorem better might be called an axiom.} one might call \textit{Brouwer's Thesis  (on bars in $\mathcal{N}$)}\footnote{For a discussion, see \cite{veldman2006b}.}. 
   
  \begin{theorem}[Bar Induction]\label{T:bimbid}$\;$
     Let $B\subseteq \mathbb{N}$ be given such that $Bar_\mathcal{N}(B)$. Let $C\subseteq \mathbb{N}$ be given such that $B\subseteq C$ and, for all $s$, $s\in C$ if and only if $\forall n[s\ast\langle n \rangle \in C]$. Then $\langle\;\rangle \in C$.
     
    \end{theorem}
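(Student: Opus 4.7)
The plan is to mirror the proof of the Fan Theorem (Theorem \ref{T:bft}), but replacing the finite binary canonical proof tree by an infinite, well-founded, $\omega$-branching one. First I would define, for each $s$, $s$ is \emph{safe} if and only if $Bar_{\mathcal{N}\cap s}(B)$. Since $Bar_\mathcal{N}(B)$, the statement ``$\langle\;\rangle$ is safe'' is true, and I would now ask, as before, how it is that we know this.

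The crux is an application of Brouwer's Thesis: any knowledge of the form $Bar_\mathcal{N}(B)$, or equivalently ``$\langle\;\rangle$ is safe'', must rest on a \emph{canonical proof}, which I picture as a well-founded tree of statements of the form ``$s$ is safe''. Its bottom node is the statement ``$\langle\;\rangle$ is safe''; its top nodes are starting points of the form ``$s \in B$, \textit{therefore} $s$ is safe''; and its reasoning steps are either forward steps ``$\forall n[s\ast\langle n\rangle$ is safe$]$, \textit{therefore} $s$ is safe'' or backward steps ``$s$ is safe, \textit{therefore} $s\ast\langle n\rangle$ is safe'' (for some particular $n$). The decisive difference with the Fan Theorem is that interior nodes may now have infinitely many upstairs neighbours, so the tree is in general infinite; it is, however, still well-founded when traversed from bottom to top, which is all I need.

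Having such a canonical proof in hand, I would simply substitute, throughout the whole proof, the statement ``$s$ is safe'' by ``$s \in C$'', and check that each kind of step remains sound under this substitution. Starting points remain sound because $B\subseteq C$, so from $s\in B$ we may conclude $s\in C$. Forward steps remain sound by the implication $\forall n[s\ast\langle n\rangle\in C]\rightarrow s\in C$ that is part of the hypothesis on $C$. Backward steps remain sound by the converse implication $s\in C\rightarrow \forall n[s\ast\langle n\rangle\in C]$, which is the other half of the biconditional assumed for $C$; note that this is where the ``if and only if'' in the statement is essential, and where the situation differs from the simpler monotone version of the Fan Theorem in Theorem \ref{T:fanbi}(ii). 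The substituted derivation is then a valid proof of its conclusion ``$\langle\;\rangle \in C$''.

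The hard part, as one might expect, is not the formal manipulation but the philosophical move in the middle paragraph: the claim that ``$Bar_\mathcal{N}(B)$'' must be backed by a canonical proof of the stated shape is not something one derives from prior principles but is itself \emph{Brouwer's Thesis on bars in $\mathcal{N}$}. This is why the paper warns that the result is perhaps better regarded as an axiom than as a theorem, and why, in contrast to the Fan Theorem situation, the backward steps cannot in general be dispensed with: the tree being infinitely branching, they are genuinely used in order for the canonical proof to terminate at $B$ along every branch.
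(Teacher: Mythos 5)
Your proposal reproduces the paper's own argument essentially line for line: the definition of ``$s$ is safe,'' the appeal to Brouwer's Thesis that ``$\langle\;\rangle$ is safe'' must be backed by a canonical proof built from starting points $s\in B$, infinitary forward steps, and backward steps, and the substitution of ``$s\in C$'' for ``$s$ is safe,'' with soundness checked via $B\subseteq C$ and the two directions of the biconditional. The closing observation that backward steps are genuinely needed here is the right intuition (the paper justifies it separately via Kleene's example, Theorem \ref{T:kleenebi}), but it is a side remark and not part of the proof itself.
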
 
 
 \begin{proof}  Let $B\subseteq \mathbb{N}$ be given such that $Bar_\mathcal{N}(B)$.

 Define, for every $s$,
 $s$ is \textit{B-safe}, or simply: {\it safe}, if and only if  $Bar_{\mathcal{N}\cap s}(B)$.  
 
 Note: $\langle \;\rangle$ is  safe.
 
 There must exist a \textit{canonical proof} of the fact: 
 
 \begin{center} $\langle \;\rangle$ is  safe. \end{center}
 
 The \textit{conclusion} of the canonical proof is:
 `$\langle \;\rangle$ is  safe.'
 
 The \textit{starting points}  of the canonical proof have the form:
 
     \begin{center}  $s \in B$.
     
     \textit{Therefore:} $s$ is safe.
     
     \end{center}
     
     There are  two kinds of reasoning steps, \textit{forward  steps} and \textit{backward  steps}. 
     
     \smallskip
     A \textit{forward reasoning step} has \textit{infinitely many premises} and is of the following form:
     
     \begin{center} 
     
     $s\ast\langle 0 \rangle$ is safe, 
     $s\ast\langle 1 \rangle$ is safe, 
     $s\ast\langle 2 \rangle$ is safe, 
     $\ldots$
     
     \smallskip
     \textit{Therefore:}  $s$ is safe.
     \end{center}
     
     \smallskip A \textit{backward reasoning step} has the form:
     
       \begin{center}
       
       $s $ is safe.

       \smallskip
       \textit{Therefore:} $s\ast\langle n \rangle$ is safe.
       \end{center}
       \medskip
       
      The canonical proof is not a finite tree but an infinite one. The canonical proof can not be written out explicitly as a finite text.
       
       \medskip
  Let $C\subseteq \mathbb{N}$ be given such that $B\subseteq C$ and, for all $s$, $s\in C$ if and only if $\forall n[s\ast\langle n \rangle \in C]$.

  \medskip
   Now replace in the canonical proof every statement `$s$ is safe' by the statement `$s\in C$'.  The result will be another \textit{valid proof}.
   
   Why? 
   
   As $B\subseteq C$, the new starting points are sound.
   
   \smallskip
   As, for every $s$, if $\forall n[s\ast\langle n \rangle \in C]$, then $s\in C$\footnote{$C\subseteq \mathbb{N}$ with this property is called \textit{inductive}.}, the new forward reasoning steps are sound.

      \smallskip
   As, for every $s$, for every $n$, if $s\in C$, then $s\ast\langle n \rangle \in C$\footnote{$C\subseteq \mathbb{N}$ with this property is called \textit{monotone}.}, the     new backward reasoning steps are sound.

       \smallskip
       We must conclude: the new conclusion is true, that is:
       $\langle \;\rangle\in C$. 
       \end{proof}
       
      \subsection{One needs backward steps}\label{SS:backward}

  \begin{theorem}[Kleene's example\footnote{See \cite[\S 7.14]{kleenevesley65}.}]\label{T:kleenebi}
  
   There exist $B\subseteq C\subseteq \mathbb{N}$ such that $Bar_\mathcal{N}(B)$ and, for all $s$, if $\forall n[s\ast\langle n \rangle \in C]$, then $s\in C$, while the statement `$\langle \;\rangle \in C$' is a reckless one. \end{theorem}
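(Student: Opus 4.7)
The plan is to exhibit concrete sets $B$ and $C$ witnessing the statement, using a reckless proposition to sabotage the forward inductive closure. I would fix a decidable $\beta\in\mathcal{N}$ with values in $\{0,1\}$ for which $P:=\exists n[\beta(n)=1]$ is reckless; for instance $\beta(n)=1$ iff $n=k_{99}$ in the sense of Definition \ref{D:k99}. Then $B$ will be a decidable subset of $\mathbb{N}$, chosen so that the depth at which a branch $\alpha$ first meets $B$ depends on a $\beta$-governed computation along $\alpha$ rather than on a uniform bound. The set $C$ will be the least superset of $B$ closed under the rule $\forall n[s\ast\langle n\rangle\in C]\Rightarrow s\in C$, so the inductive hypothesis of the theorem is satisfied by construction.

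The verifications to perform would be: (i) $Bar_\mathcal{N}(B)$, by a direct decidable case analysis on the initial values of $\alpha$, using at each step only the decidability of $\beta$; (ii) the inductive property of $C$, immediate from the definition as a forward closure; and (iii) the equivalence $\langle\;\rangle\in C\leftrightarrow P$ (or at least an implication from $\langle\;\rangle\in C$ to a reckless statement), which is what makes "$\langle\;\rangle\in C$" reckless. Steps (i) and (ii) are essentially bookkeeping once $B$ is correctly laid out.

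The main obstacle is the third verification. The naive attempts — taking $B=\{s\mid s\neq\langle\;\rangle\}$, or taking $B$ to contain every node of some fixed finite length — all fail, because then $B$ already forces $\langle n\rangle\in C$ unconditionally for every $n$, and one more forward step yields $\langle\;\rangle\in C$ with no recklessness remaining. The entire art of Kleene's construction therefore lies in arranging $B$ so that, for each individual $n$, membership of $\langle n\rangle$ in $C$ can be traced back to $B$ only through a finite chain of forward steps whose length is governed by $\beta$; then the universal statement $\forall n[\langle n\rangle\in C]$, which forward closure would need to invoke at the root, is precisely as strong as $P$ and is therefore reckless.

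Once such a $B$ is in hand, the conclusion follows: $Bar_\mathcal{N}(B)$ is established by a direct constructive argument using the decidability of $\beta$, the inductive rule is verified by the choice of $C$, and "$\langle\;\rangle\in C$" reduces to the reckless proposition $P$. This stands in sharp contrast to Theorem \ref{T:fanbi}(ii), where finite branching in $\mathcal{C}$ permits dispensing with backward steps altogether; in $\mathcal{N}$ the infinite branching allows the recklessness to propagate into the quantifier $\forall n$ and so the backward steps in the canonical proof of $Bar_\mathcal{N}(B)$, licensed by Brouwer's Thesis, are genuinely indispensable.
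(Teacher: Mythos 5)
Your outline correctly identifies what must be arranged — a bar $B$, an inductive but not monotone $C\supseteq B$, and a reckless root statement — but you never exhibit the set $B$, and the mechanism you sketch for producing the recklessness cannot work under the proof interpretation of Subsection~\ref{SS:otherlogic}. You write that each $\langle n\rangle$ can be traced into $C$ from $B$ by a finite chain of forward steps whose length is governed by $\beta$, and that the universal statement $\forall n[\langle n\rangle\in C]$, needed at the root, is then ``precisely as strong as $P$.'' But if for each $n$ you have a method producing a proof of $\langle n\rangle\in C$, then by the very explanation of $\forall$ you \emph{have} a proof of $\forall n[\langle n\rangle\in C]$, and one more forward step gives $\langle\;\rangle\in C$ outright; a universal statement is never stronger than the totality of its instances. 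This is also why the insistence on a \emph{decidable} $B$ pushes in the wrong direction: for a decidable bar, bar induction already holds with $C$ required only to be closed under forward steps, so the forward closure of a decidable bar contains $\langle\;\rangle$ provably rather than recklessly.

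Kleene's construction, which the paper reproduces, avoids both difficulties by making $B$ essentially undecidable and by taking $C:=B$ rather than a forward closure. Explicitly, $B$ consists of the sequences $\langle n\rangle$ with $n<k_{99}$, together with $\langle\;\rangle$ if and only if the reckless disjunction $\exists n[n=k_{99}]\;\vee\;\forall n[n<k_{99}]$ holds. The second clause is what makes $B$ a bar (a branch $\alpha$ with $\alpha(0)\ge k_{99}$ witnesses $\exists n[n=k_{99}]$, so $\overline\alpha 0=\langle\;\rangle\in B$; a branch with $\alpha(0)<k_{99}$ has $\overline\alpha 1\in B$), and is also exactly what makes $B$ undecidable. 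Crucially, for $n\ge k_{99}$ the node $\langle n\rangle$ has no descendants in $B$ at all, so nothing forces it into $C$, and $\forall n[\langle n\rangle\in C]$ is equivalent to $\forall n[n<k_{99}]$ --- an open hypothesis, not a theorem. The inductive condition then holds because that hypothesis implies the reckless disjunction and hence $\langle\;\rangle\in B=C$; and $\langle\;\rangle\in C$ is itself equivalent to the reckless disjunction, which delivers the conclusion.
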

  \begin{proof}  Let $B$  consist of all $s$ such that\\ \textit{either} $\exists n<k_{99}[s = \langle n \rangle]$ \textit{or} $s = \langle \;\rangle$ and  $\exists n[n=k_{99}]\vee \forall n[n<k_{99}]$.

  Let $\alpha$ be given. Either $\alpha(0) < k_{99}$ and $\overline \alpha 1 \in B$, or $\alpha(0)\ge k_{99}$ and $\langle \;\rangle =\overline \alpha 0 \in B$. We thus see: $Bar_\mathcal{N}(B)$.
  
  Let $C$ coincide with $B$.
  
   For all $s$,  if, for all $n$,  $s\ast\langle n \rangle \in C$, then $s=\langle\;\rangle$    and $\forall n[n<k_{99}]$ and $\langle \;\rangle \in 
   C$.
   
   The statement `$\langle\;\rangle \in C$' is equivalent to `$\exists n[n=k_{99}]\vee \forall n[n<k_{99}]$', and is reckless. 
   \end{proof}
   \smallskip
   Consider the set $B$ mentioned in the proof of Theorem \ref{T:kleenebi}. Note that every canonical proof of `$\langle\;\rangle$ \textit{is (B-)safe}'
    must use backward steps. The last step in a canonical  proof of `$\langle\;\rangle$ \textit{is (B-)safe}'  must be a forward step: $\langle \;\rangle$ \textit{is safe} because: {\it $\langle 0\rangle$ is safe,  $\langle 1\rangle$ is safe, $\langle 2\rangle$ is safe, $\ldots$}. 
    For each $n<k_{99}$ the conclusion: {\it $\langle n \rangle$ is safe} follows by a basic step from: $\langle n \rangle \in B$. For each $n \ge k_{99}$ the conclusion: {\it $\langle n \rangle$ is safe} follows by a backward step from: {\it $\langle \; \rangle$ is safe} and that follows in its turn by a basic step from: $\langle \;\rangle \in B$ (as $n\ge k_{99}$).
    
    \smallskip We thus see that, in this particular case, the backward steps can not be missed. In the case of the Fan Theorem, Theorem \ref{T:bft}, as we learned from Theorem \ref{T:fanbi}, one might have claimed there is a canonical proof using only forward reasoning steps.

    \section{Open Induction in $[0,1]$}
 
 Brouwer's main application of the Principle of Bar Induction, Theorem \ref{T:bimbid}, is the Fan Theorem. We now give an example of a  stronger consequence of the Principle.

\begin{definition} We let $(r_0', r_0''), \;(r_1', r_1''), \;(r_2', r_2''), \;\ldots$ be a fixed enumeration of all pairs of rationals. 

 For each $\alpha$, $\mathcal{G}_\alpha :=\{x \in \mathcal{R}|\exists n[r_{\alpha(n)}'<x<r_{\alpha(n)}'']\}.$
 
 For each $a$, $\mathcal{G}_a:=\{x\in\mathcal{R}\mid \exists n<length(a)[r'_{a(n)}<x<r''_{a(n)}]\}$.

 \smallskip $\mathcal{G}\subseteq\mathcal{R}$ is  \emph{open} if and only if, for some $\alpha$, $\mathcal{G} = \mathcal{G}_\alpha$.
 
  \smallskip For all $x,y$ in $\mathcal{R}$ such that $x<y$, we define $[x,y):=\{z\in \mathcal{R}\mid x\le_\mathcal{R} z<_\mathcal{R}y\}$.
 
 \smallskip
  $\mathcal{G}\subseteq\mathcal{R}$ is \emph{progressive in $[0,1]$} if and only if $\forall x \in [0,1][[0, x)\subseteq \mathcal{G} \rightarrow x \in \mathcal{G}]$.
  
   \end{definition}
 
  \begin{theorem}[Principle of Open Induction in ${[}0,1{]}$] $\;$
  
   For every open  $\mathcal{G}\subseteq\mathcal{R}$, if $\mathcal{G}$ is progressive in $[0,1]$, then $[0,1]\subseteq \mathcal{G}$. \end{theorem}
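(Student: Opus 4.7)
The plan is to apply the Principle of Bar Induction (Theorem \ref{T:bimbid}) to a tree of rational subintervals of $[0,1]$. I would choose a ``safe'' set $C \subseteq \mathbb{N}$ so that $\langle \rangle \in C$, combined with the base case $\{0\} \subseteq \mathcal{G}$ (which follows from progressiveness applied at $x=0$, since $[0,0) = \emptyset \subseteq \mathcal{G}$ trivially), yields $[0,1] \subseteq \mathcal{G}$.

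\textbf{Setup and the set $C$.} First I would associate to each $s \in \mathbb{N}$ a pair of rationals $(a_s,b_s)$ with $0 \le a_s \le b_s \le 1$, via an overlapping ternary subdivision patterned on the $c_\alpha$ construction of Section \ref{S:fantheorem}, extended with dummy branches for $n \ge 2$: $(a_{\langle \rangle},b_{\langle \rangle}) = (0,1)$; $(a_{s \ast \langle 0 \rangle},b_{s \ast \langle 0 \rangle}) = (a_s,\frac{1}{3}a_s+\frac{2}{3}b_s)$; $(a_{s \ast \langle 1 \rangle},b_{s \ast \langle 1 \rangle}) = (\frac{2}{3}a_s+\frac{1}{3}b_s,b_s)$; and $(a_{s \ast \langle n\rangle},b_{s \ast \langle n \rangle}) = (a_s,b_s)$ for $n \ge 2$. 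Then I would set $C := \{s \in \mathbb{N} : [0,a_s] \subseteq \mathcal{G} \Rightarrow [0,b_s] \subseteq \mathcal{G}\}$. The iff condition required by Theorem \ref{T:bimbid} is then straightforward to verify: monotonicity follows from the nestings $a_s \le a_{s \ast \langle i \rangle}$ and $b_{s \ast \langle i \rangle} \le b_s$ together with the transitive use of $s \in C$ on the right child, while inductivity is immediate from the dummy branch $s \ast \langle 2 \rangle$, whose associated interval equals that of $s$, so $s \ast \langle 2 \rangle \in C$ already forces $s \in C$.

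\textbf{The bar, and the main obstacle.} The natural candidate for the bar is $B := \{s : \exists m[[a_s,b_s] \subseteq (r'_{\alpha(m)}, r''_{\alpha(m)})]\}$, which clearly sits inside $C$: if $s \in B$ then $[a_s,b_s] \subseteq \mathcal{G}$ outright, so $[0,a_s] \subseteq \mathcal{G}$ implies $[0,b_s] \subseteq \mathcal{G}$. The hard part will be to show that $B$ is a bar in $\mathcal{N}$: for $\beta \in \mathcal{C}$ the intervals shrink to a single point $c_\beta$ and openness of $\mathcal{G}$ at $c_\beta$ would provide the containment, but only under the assumption $c_\beta \in \mathcal{G}$ -- which is precisely the conclusion the theorem asserts. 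Resolving this apparent circularity is the crux. My strategy is to enlarge $B$ to a set $B'$ that also admits $s$ into the bar whenever a finite chain of reasoning, combining openness (to absorb small neighbourhoods into basic intervals of $\mathcal{G}$) with progressiveness (to extend coverage across the overlap between the two non-trivial children), already suffices to establish $[0,a_s] \subseteq \mathcal{G} \Rightarrow [0,b_s] \subseteq \mathcal{G}$ for that particular $s$. The substantive work is then to verify that every $\beta \in \mathcal{N}$ has an initial segment in this enlarged $B'$, a claim that in spirit requires a canonical-proof argument analogous to the one underlying Theorem \ref{T:bft}, but driven by the order-theoretic structure of $[0,1]$ rather than by mere set-containment, and leaning on both hypotheses on $\mathcal{G}$ in an essential way.
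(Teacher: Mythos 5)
There is a genuine gap, and it sits precisely where you flag it as ``the crux''. You correctly observe that establishing $Bar_\mathcal{N}(B)$ from openness alone would need $c_\beta\in\mathcal{G}$, the very conclusion. But your repair — enlarging $B$ to some $B'$ captured by ``a finite chain of reasoning'' — is not a construction: you do not say what $B'$ is, and the appeal to a canonical-proof argument ``analogous to the one underlying Theorem \ref{T:bft}'' merely restates the difficulty. Worse, the dummy branches make the plan unworkable independently of that: along $\beta=\underline 2$ your intervals $(a_{\overline\beta n}, b_{\overline\beta n})$ are $(0,1)$ for every $n$, so $\overline\beta n\in B$ would require $[0,1]$ to sit inside a single basic interval, and any $B'\subseteq C$ meeting $\beta$ would already have to certify $[0,1]\subseteq\mathcal{G}$. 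No set can be a bar along a branch whose intervals never shrink, so your tree geometry blocks the argument before the circularity is even reached.

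The paper breaks the circularity by making the tree \emph{adaptive} rather than free. One sets $(c_{\langle\;\rangle},d_{\langle\;\rangle})=(0,1)$ and, for each $s$ and each $n$, one \emph{tests} whether $[0,\frac{c_s+d_s}{2}]$ is covered by the first $n$ basic intervals $(r'_{\alpha(i)},r''_{\alpha(i)})$, $i<n$, of $\mathcal{G}$ (a decidable test); if yes, $s\ast\langle n\rangle$ takes the right half $(\frac{c_s+d_s}{2},d_s)$, if no, the left half $(c_s,\frac{c_s+d_s}{2})$. The payoff is the invariant, proved by induction on $length(s)$, that $[0,c_s)$ is always finitely covered. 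Hence along any $\beta$ one has $[0,x_\beta)\subseteq\mathcal{G}$, progressiveness — used here in a load-bearing way, not just for the base point $0$ — gives $x_\beta\in\mathcal{G}$, and openness then puts some $\overline\beta m$ into $B:=\{s:\exists n\,\forall x\in[0,d_s]\,\exists i<n[r'_{\alpha(i)}<x<r''_{\alpha(i)}]\}$. Monotonicity of $B$ is clear, and inductivity works because $s\ast\langle 0\rangle$ always goes left: $s\ast\langle 0\rangle\in B$ makes the test succeed at some $n$, for which $d_{s\ast\langle n\rangle}=d_s$, and $s\ast\langle n\rangle\in B$ then gives $s\in B$. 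Bar induction yields $\langle\;\rangle\in B$, i.e.\ $[0,1]\subseteq\mathcal{G}$. Your set $C$ and the overlapping ternary subdivision are fine for verifying monotonicity, but they carry no record of what has already been covered, and that bookkeeping — baked into the branching rule — is exactly the missing idea.
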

  
Although the theorem may be new to the reader,  \'E. Borel in fact introduced and used it when he proved, in 1895, see \cite{borel}, what is now called the Heine-Borel Theorem: \begin{center} \textit{for each $\alpha$, if $[0,1]\subseteq \mathcal{G}_\alpha$, then $\exists n[[0,1]\subseteq\mathcal{G}_{\overline \alpha n}]$.} \end{center} Borel's argument may be described as follows.   Let $\alpha$ be given such that $[0,1]\subseteq \mathcal{G}_\alpha$. Define $\mathcal{H}:=\{y\in \mathcal{R}\mid \exists n[[0,y]\subseteq \mathcal{G}_{\overline \alpha n}]\}$. Note that $\mathcal{H}$ is open and progressive in $[0,1]$. Applying the principle of Open Induction on $[0,1]$, conclude: $1 \in \mathcal{H}$ and $\exists n[[0,1]\subseteq\mathcal{G}_{\overline \alpha n}]$.

 How does the classical mathematician convince herself of the validity of the Principle of Open Induction in $[0,1]$? 
 
 Given an open  $\mathcal{G}\subseteq\mathcal{R}$ that is progressive in $[0,1]$, she  starts defining an infinite sequence $x_0, x_1, \ldots$ of reals. She defines $x_0:=0$ and notes  $x_0\in \mathcal{G}$. She then finds $x_1>x_0$ such that $[0,x_1)\subseteq \mathcal{G}$. If $x_1\ge 1$,  she is done and  defines, for each $n\ge 1$, $x_n=1$. If not, she  observes: $x_1 \in \mathcal{G}$ and  finds $x_2>x_1$ such that $[0,x_2)\subseteq \mathcal{G}$.  If $x_2 \ge 1$  she is done and  defines, for each $n\ge 2$, $x_n=1$. If not, she observes: $x_2 \in \mathcal{G}$ and finds $x_3>x_2$ such that $[0,x_3)\subseteq \mathcal{G}$. And so on. She thus finds  an infinite  sequence $0\le x_0 \le x_1 \le x_2\le\ldots$ of reals  such that, for each $n$, $[0, x_n)\subseteq \mathcal{G}$ and: $x_{n+1}=x_n$ if and only if $x_n=1$.  The infinite non-decreasing sequence $x_0, x_1, \ldots$  is bounded by $1$ and thus  converges, to, say, $x_{\omega+0}=x_\omega$. Then: $x_\omega \in \mathcal{G}$ as $[x,x_\omega)\subseteq  \mathcal{G}$.   If $x_\omega \ge 1$, she is done and  defines, for each $n\ge 0$, $x_{\omega+n}=1$. If not, she starts again  and  finds  suitable  $x_{\omega +1}, x_{\omega +2}, \ldots$ such that, for each $n$, $[0, x_{\omega +n})\subseteq \mathcal{G}$ and $x_{\omega +n}\le x_{\omega +n+1}$ and: $x_{\omega+n+1}=x_{\omega+n}$ if and only if $x_{\omega+n}\ge 1$.   She then considers $x_{\omega\cdot 2}= x_{\omega+\omega}:=\lim_n x_{\omega+n}$. She has inexhaustible energy and, if needed, finds $x_{\omega\cdot 3}$ $x_{\omega\cdot 4}, \ldots x_{\omega \cdot \omega}, \ldots$. She thus continues her infinite sequence through the countable ordinals.
 
 She believes  this procedure will come to an end, that is: there must exist a countable ordinal $\beta$ such that $x_\beta=1$. She  argues a follows. For each $k>0$ there can be only $k$ countable ordinals $\alpha$ such that $x_\alpha+\frac{1}{k}<x_{\alpha+1}$. Therefore, there are only countably many countable ordinals such that $x_\alpha<x_{\alpha+1}$. Find a countable ordinal $\beta$ that lies beyond every countable ordinal $\alpha$ such that $x_\alpha<x_{\alpha +1}$. Then $x_\beta=x_{\beta+1}$ and: $x_\beta=1$.   
 
 Borel argued in this way, proudly using the countable ordinals introduced by Cantor some 20 years earlier, see \cite{hallett}.

 This argument, from a constructive point of view, is quite fantastic. We   parted company  with the classical mathematician already at the point where she believed to find $x_\omega$. It is not true, constructively, that every non-decreasing bounded infinite sequence of reals has a limit: consider the sequence defined by: $x_n=0$ if $n<k_{99}$ and $x_n=1$ if $ k_{99}\le n$. \footnote{Assume $c:=\lim_n x_n$ exists. If $c>0$, then $\exists n[n=k_{99}]$ and, if $c<1$, then $\forall n[n<k_{99}]$.} 
 
 \smallskip
 The classical mathematician  may propose a second line of thought. She might use {\it a classical mathematician's finest weapon}\footnote{See Section \ref{S:infprime}.}, as follows.  Assume $\mathcal{G}\neq [0,1]$.  Consider $y:=\inf([0,1]\setminus \mathcal{G})$. As $\mathcal{G}$ is open, $y\notin \mathcal{G}$, but also  $[0,y)\subseteq \mathcal{G}$. Contradiction. 
 
 Again, we can not partake in her joy.  It is not clear that we may construct $y$.  And, of course,  it is not clear that, having proved: $\neg\exists x\in[0,1][x\notin \mathcal{G}]$, we might conclude: $\forall x\in[0,1][x\in \mathcal{G}]$.   
 \smallskip
 
 In order to make the problem more pictorial, let us define: \textit{Achilles\footnote{See \cite[p. 338]{veldman2001}.} arrives at $x$} if and only if $[0,x)\subseteq \mathcal{G}$.

  How might  we, constructive mathematicians,  convince ourselves  that Achilles  arrives at $1$?  We should first prove that Achilles will arrive at $\frac{1}{2}$, but it does not seem easier to prove that Achilles will arrive at $\frac{1}{2}$ than to prove that he will arrive at $1$. 
  
  \begin{proof}\footnote{The theorem was found and proven by Thierry Coquand, in 1997. We were discussing then the problem if positive contrapositions of the \textit{minimal bad sequence} arguments used in \cite{Nash-Williams} might be intuitionistically true, see \cite[\S 11]{veldman2004}.}  Let  $\mathcal{G}\subseteq\mathcal{R}$ be open and progressive in $[0,1]$. Find $\alpha$ such that $\mathcal{G} = \mathcal{G}_\alpha$. We  define a mapping $s\mapsto (c_s, d_s)$ that associates to every  $s$ in $\mathbb{N}$ a pair of rationals. The elements $s$ of $\mathbb{N}$ are thought of as coding finite sequences of natural numbers, see Subsection \ref{SS:bcp}, Definition \ref{D:coding}. 
  
  \begin{enumerate}[\upshape (i)] \item $(c_{\langle\;\rangle}, d_{\langle\;\rangle}):=(0,1)$.  \item For each $s$, for each $n$,  find out if   $\forall x \in [0, \frac{c(s)+ d(s)}{2}]\exists i < n [r'_{\alpha(i)}<x < r''_{\alpha(i)}]$.\footnote{Note that this a \textit{decidable} proposition.} \\\textit{If so}, define $(c_{s\ast\langle n\rangle} , d_{s\ast\langle n\rangle})=( \frac{c_s+d_s}{2}, d_s)$, and \\\textit{if not}, define $(c_{s\ast\langle n\rangle} , d_{s\ast\langle n\rangle})=( c_s,\frac{c_s+d_s}{2})$. 
 \end{enumerate} 
 \smallskip Note: for each $s$,  $(c_{s\ast\langle 0\rangle} , d_{s\ast\langle 0\rangle})=(c_s, \frac{c_s+d_s}{2})$.
  \\Also note: for each $s$, $\exists n\forall x \in [0,c_s) \exists i<n[r'_{\alpha(i)}<x<r''_{\alpha(i)}]$.   \\One proves this by induction on the length of $s$.

   \smallskip
   For each $\beta$ in $\mathcal{N}$, we let $x_\beta$ be the real number such that, for each $n$, \\$x_\beta(n)=(c_{\overline\beta n}, d_{\overline \beta n})$. Note: for each $\beta$, $[0, x_\beta)\subseteq \mathcal{G}$ and thus $x_\beta \in \mathcal{G}$.

   \smallskip
   Let  $B$ be the set of all $s$  such that $\exists n\forall x\in [0, d_s]\exists i< n[r'_{\alpha(i)}<x<r''_{\alpha(i)}]$.

   \smallskip
   Let $\beta$ be given. Using the fact: $x_\beta \in \mathcal{G}$, find $q$ such that $r'_{\alpha(q)}<x_\beta<r''_{\alpha(q)}$. \\Find  $m$ such that $r'_{\alpha(q)}<c_{\overline\beta m}<d_{\overline\beta m}<r''_{\alpha(q)}$. \\Find   $p$ such that $\forall x \in [0, c_{\overline \beta m}]\exists i<p[r'_{\alpha(i)}<x<r''_{\alpha(i)}]$.  \\Define $n:=\max(p, q+1)$. \\ Conclude: $\forall x \in [0, d_{\overline \beta m}]\exists i<p][r'_{\alpha(i)} < x<r''_{\alpha(i)}]]$ and: $\overline \beta m \in B$.

   We thus see: $Bar_\mathcal{N}(B)$, i.e.: B is a bar in $\mathcal{N}$.   
   
   \smallskip
   
   Note: $B$ is monotone, i.e.: for all $s$, if $s\in B$, then $\forall n[s\ast\langle n \rangle \in B]$.
   
   \smallskip
   We now prove that $B$ is also inductive. \\Let $s$ be given such that $\forall n[s\ast\langle n \rangle \in B]$. Then,   in particular: $s\ast\langle 0\rangle \in B$. \\ Note $d_{s\ast\langle 0 \rangle}=\frac{c_s+d_s}{2}$ and find $n$ such that $\forall x \in [0, \frac{c_s+d_s}{2}]\exists i<n[r'_{\alpha(i)}<x<r''_{\alpha(i)}]$. \\Conclude: $(c_{s\ast\langle n \rangle}, d_{s\ast\langle n \rangle})= (\frac{c_s+d_s}{2}, d_s)$. Using the fact: $s\ast\langle n \rangle \in B$, find $m$ such that $\forall x \in [0, d_{s\ast\langle n\rangle}]\exists i<m[r'_{\alpha(i)}<x<r''_{\alpha(i)}]$. \\ Note: $d_{s\ast\langle n \rangle }= d_s$ and conclude: $s \in B$.
   
   We thus see: if $\forall n[s\ast\langle n \rangle \in B]$, then $s\in B$. 
   
   \smallskip
  As $Bar_\mathcal{N}(B)$ and $\forall s[s\in B\leftrightarrow \forall n[s\ast\langle n \rangle \in B]]$, we may conclude, using Theorem \ref{T:bimbid}: $\langle \;\rangle \in B$, that is: $\exists n \forall x\in [0,1]\exists i<n[r'_{\alpha(i)}< x < r''_{\alpha(i)}]$ and: $[0,1]\subseteq \mathcal{G}$.   \end{proof}
  
  The Principle of Open Induction in $[0,1]$ plays a large r\^ole in \cite{veldman2011c}.
      \section{Brouwer's Thesis, again}

      \subsection{Using stumps}\label{SS:stumps}
      In intuitionistic analysis, stumps play the r\^ole fulfilled by countable ordinals in classical analysis.\begin{definition} For every $s$ in $\mathbb{N}$, for every  $A\subseteq\mathbb{N}$, 
$s\ast A := \{s\ast t \mid t \in A\}.$

 $\mathbf{Stp}$, a collection  of  subsets of $\mathbb{N}$, called \emph{stumps}, is defined as follows. 

\begin{enumerate}[\upshape (i)]
\item $\emptyset \in \mathbf{Stp}$, and
\item for every infinite sequence $S_0, S_1, \ldots$ of elements of $\mathbf{Stp}$,  the set \\$S:= \{\langle \;\rangle\}\cup\bigcup\limits_{n \in \mathbb{N}}\langle
 n \rangle \ast S_n$ is again an element of $\mathbf{Stp}$, and
\item nothing more: every element of $\mathbf{Stp}$ is obtained by starting from $\emptyset$ and applying the operation mentioned in (ii) repeatedly. \end{enumerate}\end{definition}

Note that, for every stump $S$,  either $S = \emptyset$ or $\langle \;\rangle \in S$.

\begin{definition}\label{D:substump} For every $A\subseteq \mathbb{N}$, for every $n$, $A\upharpoonright\langle n \rangle:= \{s\mid\langle n \rangle \ast s\in A\}$.

 For every non-empty stump $S$, for every $n$, $S\upharpoonright\langle  n\rangle$ is again a stump. 

$S\upharpoonright\langle  n\rangle$ is called \emph{the $n$-th immediate substump of $S$}. \end{definition}

\begin{axiom}[Principle of Induction on $\mathbf{Stp}$]\label{A:stpinduction}

 Let $P\subseteq\mathbf{Stp}$ be given. If $\emptyset \in P$, and
 for every non-empty stump $S$, if $\forall n[S\upharpoonright\langle  n\rangle \in P]$, then $S \in P$, then   $\mathbf{Stp}= P$.  \end{axiom}
 
 \begin{theorem}[Brouwer's Thesis on bars in $\mathcal{N}$\footnote{See \cite[\S 13.0]{veldman1981}, \cite[Theorem 1.1]{veldman2006b} and \cite[Theorem 2]{veldman2008}.
 }]\label{T:brthesis} For every   $B\subseteq \mathbb{N}$   such that  $Bar_\mathcal{N}(B)$,
 there exists a stump $S$ such that $Bar_\mathcal{N}(S \cap B)$. \end{theorem}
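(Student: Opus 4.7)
The plan is to apply the Principle of Bar Induction (Theorem \ref{T:bimbid}) to a set $C$ equipped with stump witnesses. Define $s \in C$ to mean: there exists a stump $S_s$ such that for every $\alpha \in \mathcal{N}$ with $s \sqsubset \alpha$, one may find $t \in S_s$ with $s \ast t \sqsubset \alpha$ and some prefix $u \sqsubseteq s \ast t$ satisfying $u \in B$. I first note $B \subseteq C$: if $s \in B$, the one-element stump $\{\langle\;\rangle\}$ (obtained from clause (ii) by taking every $S_n = \emptyset$) does the job, with $t = \langle\;\rangle$ and $u = s$.

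Next I would verify the equivalence $s \in C \leftrightarrow \forall n[s\ast\langle n\rangle \in C]$ required by Theorem \ref{T:bimbid}. For the direction from right to left, given witness stumps $S_{s\ast\langle n\rangle}$ for each $n$, form $S_s := \{\langle\;\rangle\} \cup \bigcup_n \langle n\rangle \ast S_{s\ast\langle n\rangle}$, which is a stump by clause (ii); any $\alpha \in \mathcal{N} \cap s$ extends $s\ast\langle n\rangle$ for $n := \alpha(length(s))$, and the witness $t' \in S_{s\ast\langle n\rangle}$ for this $\alpha$ yields the witness $\langle n\rangle \ast t' \in S_s$.

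For the other direction, assume $s \in C$ with witness $S_s$ and fix $n$. I propose the substump $T_n := S_s \upharpoonright \langle n\rangle$ (adjoining $\langle\;\rangle$ if $T_n$ happens to be empty; the result is always a stump) as candidate witness for $s \ast \langle n\rangle$. The main obstacle, and essentially the only subtle point in the proof, is handling an $\alpha$ extending $s\ast\langle n\rangle$ whose witness from $S_s$ happens to be $t = \langle\;\rangle$: one then knows only that some prefix of $s$ lies in $B$, not obviously a prefix of $s\ast\langle n\rangle$. But every prefix of $s$ is also a prefix of $s\ast\langle n\rangle$, so one takes $\langle\;\rangle \in T_n$ together with the same $u$. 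The other case, where the witness $t \in S_s$ has the form $\langle n\rangle \ast t'$ with $t' \in T_n$, is immediate.

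Applying Theorem \ref{T:bimbid} yields $\langle\;\rangle \in C$, with some witness stump $S$. For each $\alpha \in \mathcal{N}$ there is then $t \in S$ with $t \sqsubset \alpha$ and some $u \sqsubseteq t$ in $B$. A straightforward induction on $\mathbf{Stp}$, via Axiom \ref{A:stpinduction}, shows that stumps are closed under taking prefixes, so $u \in S$ as well. Hence $u \in S \cap B$ and $u \sqsubset \alpha$, proving $S \cap B$ is a bar in $\mathcal{N}$, as desired.
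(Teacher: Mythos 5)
Your proposal is correct and takes essentially the same route as the paper: you build $C$ as the collection of finite sequences $s$ for which a stump records the pieces of $B$ lying in the subtree past $s$, check $B\subseteq C$ and the biconditional $s\in C\leftrightarrow\forall n[s\ast\langle n\rangle\in C]$ (the paper calls these two directions ``inductive'' and ``monotone''), and apply Theorem~\ref{T:bimbid}. You are slightly more explicit than the paper in the final step, where you invoke closure of stumps under prefixes to pass from ``some prefix of $t$ lies in $B$'' to ``$S\cap B$ is a bar''; the paper uses the same fact silently.
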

 
 \begin{proof} Let $B\subseteq\mathbb{N}$ be given such that $Bar_\mathcal{N}(B)$.

 Let $C$ be the set  of all $s$ such that, for some stump $S$,
 
 $ \forall \alpha \exists n[
   \overline  \alpha n \in S\;\wedge\;\exists t\sqsubseteq s\ast\overline \alpha n[t\in B]].$ 
  
  \smallskip
 Let $s$ be given such that  $s\in B$. Define $S:=\{\langle\;\rangle\}$ and note:\\
    $\forall \alpha [
   \overline  \alpha 0 \in S \;\wedge\; s\ast\overline\alpha 0 \in B].$
   
   We thus see: $B\subseteq C$. 
   
   \smallskip
   We now prove that $C$ is inductive.
   
   Let $s$ be given such that $\forall m[s\ast\langle m \rangle\in C]$.
      
    Find an infinite sequence $S_0, S_1, \ldots$ of stumps such that \\$\forall m \forall \alpha \exists n[
   \overline  \alpha n \in S_m \;\wedge\; \exists t\sqsubseteq s\ast\langle m \rangle \ast \overline\alpha n[t \in B]].$  Define a non-empty stump $S$  such that, for each $m$, $ S\upharpoonright\langle  m\rangle=S_m$ and conclude: 	$\forall \alpha \exists n[
   \overline  \alpha n \in S \;\wedge\; \exists t\sqsubseteq s\ast\overline\alpha n[t \in B]$ and: $s\in C$.
   
   We thus see: for each $s$, if $\forall m[s\ast\langle m \rangle \in C]$, then $s\in C$.

      \smallskip

     Finally, we show that $C$ is monotone.
     
      Let $s$ in $C$ be given.  
      
      Find a stump $S$ such that $\forall \alpha \exists n[
   \overline  \alpha n \in S \;\wedge\; \exists t\sqsubseteq s\ast\overline\alpha n\in B]]$. 
   
   Let $m$ be given and distinguish two cases:
   
   \textit{Case (1)}. $S\upharpoonright\langle  m\rangle \neq \emptyset$. Then $\forall \alpha \exists n[
   \overline  \alpha n \in S\upharpoonright\langle  m\rangle \;\wedge\; \exists t\sqsubseteq s\ast\langle m \rangle  \ast \overline\alpha n \in B]]$, and: $s\ast\langle m\rangle \in C$. 
   
    \textit{Case (2)}. $S\upharpoonright\langle  m\rangle = \emptyset$. Define $T := \{\langle\;\rangle\}$ and note:  \\$\forall \alpha [
   \overline  \alpha 0 \in T \;\wedge\; \exists t\sqsubseteq s\ast\langle   m \rangle\ast \overline\alpha 0[t\in B]]$, and: $s\ast\langle m \rangle \in C$.  
      
      We thus see: for all $s$, for all $m$, if $s\in C$, then $s\ast\langle m \rangle \in C$.
      
      \smallskip Using Theorem \ref{T:bimbid}, we conclude: $\langle \;\rangle \in C$, 
    that is: there exists a stump $S$ such that $\forall \alpha \exists n[
   \overline  \alpha n \in S \;\wedge\;  \overline\alpha n \in B]$, that is:  $Bar_\mathcal{N}(S\cap B)$.  
       \end{proof} 
    
    In the next Section, we show that  Theorem \ref{T:brthesis} is a useful conclusion from Theorem \ref{T:bimbid}.  
       
  \section{Ramseyan Theorems} 
   
   \subsection{Dickson's Lemma}
   
   \begin{definition}  For all $\alpha, \beta$ in $\mathcal{N}$,    $\alpha\circ \beta$ is the element of $\mathcal{N}$ such that, for each $n$, $\alpha\circ\beta(n) = \alpha\bigl(\beta(n)\bigr).$
   
   \smallskip $[\omega]^\omega :=\{\zeta \in \mathcal{N}\mid\forall i[\zeta(i) <\zeta(i+1)]\}.$ 
    \end{definition}

   The following Lemma, a humble member of a family of results called  \textit{Ramseyan Theorems}, plays a r\^ole  in Computer Algebra. The Lemma  lies at the basis of Buchberger's algorithm.\footnote{See \cite[Theorem 5]{cox}.} 
   
   \begin{theorem}[Dickson's Lemma]\label{T:dickson} For all $p>0$, for all $\alpha_0, \alpha_1, \ldots \alpha_{p-1}$ in $\mathcal{N}$, there exist $i,j$ such that $i<j$ and $\forall k<p[\alpha_k(i)\le\alpha_k(j)]$. \end{theorem}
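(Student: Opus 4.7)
The natural plan is to prove Theorem \ref{T:dickson} by induction on $p$.

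For the base case $p=1$, I set $M := \alpha_0(0)$ and examine the $M+1$ values $\alpha_0(1), \alpha_0(2), \ldots, \alpha_0(M+1)$. Since, for each natural $k$, it is decidable whether $\alpha_0(k) \ge \alpha_0(0)$ or $\alpha_0(k) < \alpha_0(0)$, I classify each. If some $k$ with $1 \le k \le M+1$ satisfies $\alpha_0(k) \ge \alpha_0(0)$, then $(0,k)$ is a good pair. Otherwise all $M+1$ values $\alpha_0(1), \ldots, \alpha_0(M+1)$ lie in the $M$-element set $\{0, 1, \ldots, M-1\}$, and the finite pigeonhole principle delivers indices $1 \le i < j \le M+1$ with $\alpha_0(i) = \alpha_0(j)$, hence $\alpha_0(i) \le \alpha_0(j)$.

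For the inductive step from $p$ to $p+1$, given $\alpha_0, \ldots, \alpha_p$, I iterate the inductive hypothesis on the first $p$ sequences. Applied to the tails $\alpha_0(n+\cdot), \ldots, \alpha_{p-1}(n+\cdot)$, the inductive hypothesis produces, for any starting index $n$, a pair $n \le i < j$ with $\alpha_k(i) \le \alpha_k(j)$ for all $k<p$. Starting from $n_1 := 0$ and recursively setting $n_{m+1} := j_m + 1$, I obtain an infinite strictly nested sequence of pairs $(i_m, j_m)$, each good for the first $p$ coordinates. If for any $m$ one additionally has $\alpha_p(i_m) \le \alpha_p(j_m)$, the pair $(i_m, j_m)$ is good for all $p+1$ coordinates, and we are done.

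The hard part will be the residual case in which $\alpha_p(j_m) < \alpha_p(i_m)$ for every $m$. Classically one would extract from $(i_m)_{m \ge 1}$ a further sub-sequence on which all of $\alpha_0, \ldots, \alpha_p$ are simultaneously non-decreasing (a Ramsey-for-pairs argument), but the required case analyses are not intuitionistically justifiable. The appropriate remedy, in the spirit of this paper, is to recast the inductive step as a direct appeal to Brouwer's Thesis, Theorem \ref{T:brthesis}, or equivalently to Bar Induction, Theorem \ref{T:bimbid}: one shows, by induction on $p$, that the set $B_p$ of codes of finite sequences of $p$-tuples that already contain a good pair is a bar in $\mathcal{N}$, verifying at each inductive stage that $B_p$ is both monotone and inductive. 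The base step $B_1$ is a bar by the pigeonhole argument above, and the inductive step uses the canonical proof of the bar-ness of $B_p$ (delivered by Brouwer's Thesis) to transport its inductive structure along the additional coordinate $\alpha_p$, thereby producing a canonical proof that $B_{p+1}$ is a bar as well. This is exactly the point at which the strength of Bar Induction, and not merely of the Fan Theorem, is invoked.
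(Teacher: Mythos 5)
Your base case is correct, though a little indirect: the paper simply observes that $\alpha_0$ cannot decrease strictly more than $\alpha_0(0)$ times, so $\exists i\le\alpha_0(0)[\alpha_0(i)\le\alpha_0(i+1)]$. In the inductive step, however, you stop exactly where the naive argument gets stuck, and you say so yourself: having produced a sequence of pairs $(i_m,j_m)$ each good for the first $p$ coordinates, you cannot decide constructively whether some $(i_m,j_m)$ is also good for $\alpha_p$. The remedy you then sketch, passing to Bar Induction on a set $B_p$ of finite sequences already containing a good pair, is not worked out and is itself problematic: that $B_p$ is monotone, but it is not, as described, inductive, since $\forall n[s\ast\langle n\rangle\in B_p]$ can hold because every one-step extension closes a good pair \emph{with the freshly appended tuple}, without $s$ itself already containing one. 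So there is a genuine gap.

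The missing idea is that Dickson's Lemma admits an \emph{elementary} constructive proof, using neither Bar Induction nor the Fan Theorem. The paper's trick is to fold the goal into the statement being proved so that every case distinction becomes decidable. Write $QED$ for the conclusion $\exists i\exists j[i<j\;\wedge\;\forall k\le p[\alpha_k(i)\le\alpha_k(j)]]$. One proves, by an inner induction on $m$, the statement $(\ast)$: for every $m$, $\forall n\exists j\ge n[m\le\alpha_p(j)\;\vee\;QED]$. The induction step for $(\ast)$ uses $(\ast)$ at $m$ to build $\zeta\in[\omega]^\omega$ along which, for each $i$, either $\alpha_p\circ\zeta(i)\ge m$ or $QED$; it then applies the outer induction hypothesis (Dickson for $p$) to $\alpha_0\circ\zeta,\ldots,\alpha_{p-1}\circ\zeta$ to obtain $i<j$ good for the first $p$ coordinates. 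Now either $\alpha_p\circ\zeta(i)\ge m+1$, or $\alpha_p\circ\zeta(j)\ge m+1$, or both equal $m$, in which case the pair is good for all $p+1$ coordinates, i.e.\ $QED$. This three-way split is between concrete computed integers and is decidable. Finally, from $(\ast)$ one builds $\zeta\in[\omega]^\omega$ with $\forall n[\alpha_p\circ\zeta(n)\le\alpha_p\circ\zeta(n+1)\;\vee\;QED]$, applies the case $p$ once more to $\alpha_0\circ\zeta,\ldots,\alpha_{p-1}\circ\zeta$, and concludes $QED$. Your closing claim that one must invoke ``the strength of Bar Induction, and not merely of the Fan Theorem'' is therefore the opposite of the truth: the theorem needs neither.
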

   
 We first sketch  a proof along traditional lines that fails to be constructive.
 
 \smallskip  The proof is by induction to $p$. 
   
   \smallskip First consider the case $p=1$. Let $\alpha_0$ be given and consider $\alpha_0(0)$. Note: $\exists i\le \alpha_0(0)[\alpha_0(i)\le\alpha_0(i+1)]$.
   
   \smallskip Now let $p$ be given such that the case $p$ of the Theorem has been established. We want to prove the case $p+1$.
   
   Let $\alpha_0, \alpha_1, \ldots, \alpha_{p}$ be given.
   
  $(\flat)$ Find $\zeta$ in $[\omega]^\omega$ such that $\forall i[\alpha_p\circ\zeta(i)\le\alpha_p\circ\zeta(i+1)]$.
  
  Applying the induction hypothesis, determine $i,j$ such that, for all $k<p$, \\$\alpha\circ\zeta(i)\le\alpha\circ\zeta(j)$ and conclude: for all $k\le p+1$, $\alpha\circ\zeta(i)\le\alpha\circ\zeta(j)$.
  
  This completes the proof of the induction step.
  
  \smallskip Unfortunately, there are difficulties with $(\flat)$. It is, in general, not possible to find $\zeta$ in $[\omega]^\omega$ such that $\forall i[\alpha_p\circ\zeta(i)\le\alpha_p\circ\zeta(i+1)]$. The following example makes this clear. 
  
  Define $\alpha$ such that, for all $n$, if $n<k_{99}$, then $\alpha(n)=1$, and, if $k_{99}\le n$, then $\alpha(n)=0$. Suppose we find $\zeta$ in $[\omega]^\omega$ such that $\forall i[\alpha\circ\zeta(i)\le\alpha\circ\zeta(i+1)]$. \\If $\alpha\circ\zeta(0) =0$, then $k_{99}\le\zeta(0)$, and, if $\alpha\circ\zeta(0)=1$, then $\forall n[n<k_{99}]$.\\ The statement `$\exists \zeta \in [\omega]^\omega\forall i[\alpha\circ \zeta(i)\le\alpha\circ\zeta(i+1)]$' thus is \textit{reckless} or \textit{hardy}. 
  
  We now give a constructive proof of Dickson's Lemma.
  
  \begin{proof}\footnote{See \cite[Theorem 1.1]{veldman2004}.}
  The proof is by induction on $p$. The case $p=1$ is easy and is done as in the sketch preceding this proof.
  Now let $p$ given such that the case $p$ has been established. We prove the case $p+1$.
  
  \smallskip
  Let $\alpha_0, \alpha_1, \ldots, \alpha_p$ be given. 
  \\Define\footnote{$QED$ (again) stands for: \textit{quod \emph{est} demonstrandum, `what we (still) have to prove'}.} $QED:=\exists i \exists j[i<j\;\wedge\;\forall k\le p[\alpha_k(i)\le\alpha_k(j)]$. 
 \\ We first prove: \begin{quote}$(\ast)$ for each $m$, $\forall n\exists j\ge n[m\le\alpha_p(j)\;\vee\;QED]$.\end{quote}
  This again is done by induction. Note that the case $m=0$ is trivial.
  \\ Now assume $m$ is given and the case $m$ has been established.\\ We prove the case $m+1$.
  \\ Let $n$ be given.\\ Using the induction hypothesis, find $\zeta$ in $[\omega]^\omega$ such that $\forall i[\alpha_p\circ\zeta(i)\ge m \;\vee\;QED]$.\\ Find $i,j$ such that $n<i<j$ and $\forall k<p[\alpha_k\circ\zeta(i)\le\alpha_k\circ\zeta(j)]$. \\Note: either $m+1\le\alpha_p\circ\zeta(i)$ or $m+1\le\alpha_p\circ\zeta(j)$ or $QED$.\\We thus see: $\forall n\exists j\ge n[m+1\le\alpha_p(j)\;\vee\;QED]$.
  This completes the proof of $(\ast)$. 
  
  \smallskip Using $(\ast)$ we find $\zeta$ in $[\omega]^\omega$ such that $\forall n[\alpha_p\circ\zeta(n)\le \alpha_p\circ\zeta(n+1)\;\vee\;QED]$. \\Find $i,j$ such that $i<j$ and $\forall k<p[\alpha_k\circ\zeta(i)\le\alpha_k\circ\zeta(j)]$. \\Note:  $\alpha_p\circ\zeta(i)\le\alpha_p\circ\zeta(j)\;\vee\;QED$ and conclude: $QED$.
  
  This concludes the proof of the case $p+1$.
  
  The theorem thus is proven by induction.\end{proof}
  
   \subsection{How to formulate and extend Ramsey's Theorem?}\label{SS:irt}$\;$
  
 The following Definition extends Definition  \ref{D:coding} in Subsection \ref{SS:bcp}. 
 \begin{definition}\label{D:coding2}
   For all $\alpha$, for all $n$, for all $s$ in $\omega^n$,    $\alpha\circ s$ is the element of $\omega^n$ such that, for each $i<n$, $\alpha\circ s (i) = \alpha\bigl(s(i)\bigr).$
   
   \smallskip
   
    For all $m$, for all $s$ in $\omega^m$, for all $n$, for all $t$ in $\omega^n$  such that $\forall i<n[ t(i)<m]$,  $s\circ t$ is the element  of $\omega^n$ such that  for each  $i<n$, $s\circ t(i) = s\bigl(t(i)\bigr).$
   
   \smallskip
 $[\omega]^n:=\{s\in\omega^n\mid \forall i[i+1<n\rightarrow s(i)<s(i+1)]\}$.

   \smallskip $[\omega]^{<\omega} :=\bigcup_{n}[\omega]^n$.

  \end{definition}

 \begin{definition}\label{D:almost-full}\footnote{This definition improves the definition as used in \cite[Section 4.2]{veldman2004}. The proof of \cite[Theorem 4.4]{veldman2004} is not correct as was pointed out to me by M. Bickford. The argument may be saved if one restricts attention to \textit{strictly increasing} sequences as we do here.}

     $A\subseteq \mathbb{N}$  is  \emph{almost-full} if and only if  $\forall \zeta \in [\omega]^\omega\exists  s\in [\omega]^{<\omega}[\zeta\circ s\in A]$. 
   \end{definition}
 Let $k>0$ be given. The $k$-dimensional case of Ramsey's Theorem is the following statement:
  \begin{quote}$\mathbf{IRT}(k)$: {\it For all $A,B\subseteq [\omega]^k$, \\if $A,B$ are almost-full, then $A\cap B$ is almost-full.}\end{quote}
  
  As, for all $A\subseteq [\omega]^k$,  $A\cap([\omega]^k \setminus A)=\emptyset$ is not almost-full, the classical mathematician draws the following conclusion from $\mathbf{IRT}(k)$: \begin{quote} $\mathbf{IRT}(k)_{class}$: {\it For all $A\subseteq [\omega]^k$,\\ either $A$ is not almost-full or $[\omega]^k\setminus A$ is not almost-full},\end{quote} i.e. 
  \begin{quote} {\it  either $\exists\zeta\in[\omega]^\omega\forall s\in [\omega]^k[\zeta\circ s\in A]$ or $\exists\zeta\in[\omega]^\omega\forall s\in [\omega]^k[\zeta\circ s\notin A]$.}\end{quote}
  
  This is the formulation of the theorem that will be familiar to the classical reader. She also will  be able to conclude (her reading of) $\mathbf{IRT}(k)$ from $\mathbf{IRT}(k)_{class}$.\footnote{Constructively, already $\mathbf{IRT}(1)_{class}$ is a \textit{reckless} or \textit{hardy} statement:\\ consider $A:=\{\langle n \rangle\mid n<k_{99}\}$.}
  
  \smallskip
  F.P. Ramsey proved: $\forall k>0[\mathbf{IRT}(k)_{class}]$, see  \cite{ramsey}.
  
 \smallskip There is an extension of Ramsey's result into the transfinite that is called \textit{the Clopen Ramsey Theorem}.
 
 We need the following definition.  \begin{definition} For every stump $U$, we define $\overline U:=\{s\mid s \notin U \;\wedge\;\forall t\sqsubset s[t\in U]\}$. 
  The set $\overline U$ is called the \emph{border} of the stump $U$. \end{definition}
  
   Let a stump $U$ be given. We introduce the following statement:
  \begin{quote}$\mathbf{IRT}(\overline U)$: {\it For all $A,B\subseteq \overline U $, \\if $A,B$ are almost-full, then $A\cap B$ is almost-full.}\end{quote}
  
  \smallskip
  The \textit{Clopen Ramsey Theorem}\footnote{See   \cite{clote} and \cite{fraisse}.}, $\mathbf{CRT}$, is the statement: \\{\it for every stump $U$, $\mathbf{IRT}(\overline U)$.}
  
  \smallskip
  The \textit{Infinite Ramsey Theorem}\footnote{See \cite[Theorem 7.3]{veldmanbezem}}, $\mathbf{IRT}$, is the statement: {\it for all $k$, $\mathbf{IRT}(k)$.}
  
  \smallskip
  $\mathbf{IRT}_{class}$ is the (constructively wrong) statement: {\it for all $k$, $\mathbf{IRT}(k)_{class}$}. 
  
   \subsection{The proof of the Clopen Ramsey Theorem}\label{SS:proofclopenr}\footnote{The Theorem has been announced in \cite[\S 10.2]{veldmanbezem} and \cite[\S 6]{veldman2008} but, until now, no proof appeared in print.}
 
 The following Definition extends Definition \ref{D:coding2} in Subsection \ref{SS:irt}. 
  \begin{definition}\label{D:secures} For each $n,$ for each $k$, $[n]^k:=\{s\in [\omega]^k\mid\forall i<k[s(i)<n]\}$. 
  
  \smallskip For each $n$, for each $k$, $[n]^{<k}:=\bigcup_{i<k}[n]^i$. 
  
  \smallskip Let a  stump $S$ be given. 
   For every $A\subseteq \mathbb{N}$, \emph{$S$ secures $A$} if and only if \\$\exists m\forall \zeta\in [\omega]^\omega[\zeta(0)>m\rightarrow\exists n\exists s \in [n]^{<n+1}[\overline \zeta n \in S\;\wedge\; \overline\zeta n\circ s \in A]]$.\end{definition} 
  
  \begin{corollary}\label{C:stumpsecurealmostfull} For each $A\subseteq \mathbb{N}$, if $A$ is almost-full, then  there exists a stump $S$ securing $A$. \end{corollary}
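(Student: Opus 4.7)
The plan is to reduce the Corollary to Brouwer's Thesis on bars in $\mathcal{N}$, Theorem~\ref{T:brthesis}. I would associate to the almost-full set $A$ a set $B\subseteq\mathbb{N}$ recording, for each finite sequence, whether some strictly increasing sub-selection of it already lies in $A$, and then enlarge $B$ so that it becomes a bar in all of $\mathcal{N}$. The stump extracted from Brouwer's Thesis will then be the desired securing stump.

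First I would define
\[B:=\{t\mid \exists s \in [length(t)]^{<length(t)+1}[t\circ s \in A]\}.\]
For any $\zeta \in [\omega]^\omega$, almost-fullness of $A$ supplies $s\in[\omega]^{<\omega}$ with $\zeta\circ s\in A$; setting $n:=\max_i s(i)+1$ one has $s\in[n]^{<n+1}$ (a strictly increasing sequence of naturals below $n$ has length at most $n$) and $\overline\zeta n\in B$. To lift this to a bar in all of $\mathcal{N}$ I would put
\[B_0:=\{t\mid\exists i<length(t)-1[t(i)\ge t(i+1)]\},\qquad B^\ast:=B\cup B_0.\]

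The heart of the argument is showing $Bar_\mathcal{N}(B^\ast)$. Given $\alpha\in\mathcal{N}$, I would form its strictly increasing hull $\zeta$ by $\zeta(0):=\alpha(0)$ and $\zeta(i+1):=\max(\zeta(i)+1,\alpha(i+1))$, so $\zeta\in[\omega]^\omega$. Almost-fullness supplies $s$ with $\zeta\circ s\in A$; set $n:=\max_i s(i)+1$. The equality $\overline\alpha n=\overline\zeta n$ of finite sequences is decidable. If it holds, then $\overline\alpha n\circ s=\zeta\circ s\in A$, hence $\overline\alpha n\in B\subseteq B^\ast$. If it fails, let $j<n$ be the least index with $\alpha(j)\neq\zeta(j)$; since $\zeta(0)=\alpha(0)$ one has $j\ge 1$, and the recursion gives $\zeta(j)=\max(\alpha(j-1)+1,\alpha(j))\neq\alpha(j)$, forcing $\alpha(j-1)\ge\alpha(j)$ and $\overline\alpha(j+1)\in B_0\subseteq B^\ast$.

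Now Theorem~\ref{T:brthesis} furnishes a stump $S$ with $Bar_\mathcal{N}(S\cap B^\ast)$. I claim $S$ secures $A$ with $m=0$. Given any $\zeta\in[\omega]^\omega$ with $\zeta(0)>0$, find $n$ such that $\overline\zeta n\in S\cap B^\ast$. Since $\zeta$ is strictly increasing, $\overline\zeta n\notin B_0$, so $\overline\zeta n\in B$, whence there is $s\in[n]^{<n+1}$ with $\overline\zeta n\circ s\in A$, and simultaneously $\overline\zeta n\in S$, as required. The main obstacle I foresee is making the bar step genuinely constructive: we cannot decide in general whether a given $\alpha$ remains strictly increasing forever, so we cannot simply feed $\alpha$ itself into the almost-fullness of $A$. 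The hull $\zeta$ circumvents this by replacing that undecidable question by a decidable comparison of two finite sequences of equal length, and in either outcome $\alpha$ is shown to have a prefix in $B^\ast$.
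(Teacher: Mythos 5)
Your proposal is correct and follows essentially the same route as the paper's proof: construct a bar in all of $\mathcal{N}$ from $A$ by adjoining to the ``some increasing selection lies in $A$'' condition an escape clause for finite sequences that are not strictly increasing, prove barhood via a strictly increasing hull of an arbitrary $\alpha$, invoke Theorem~\ref{T:brthesis} to get a stump $S$ with $Bar_\mathcal{N}(S\cap B^\ast)$, and observe that $S$ secures $A$. Your hull construction $\zeta(i+1):=\max(\zeta(i)+1,\alpha(i+1))$ differs slightly from the paper's (which conditions on whether $\overline\alpha(n+1)\in[\omega]^{n+1}$), and you package the escape clause as a separate set $B_0$ rather than writing it into $B$, but these are cosmetic differences and the verification steps are the same.
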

  
  \begin{proof} Let $A\subseteq \mathbb{N}$ be almost-full.
  
   Define $B:=\bigcup_n\{u \in \omega^n\mid u \notin [\omega]^n \;\vee\; \exists s\in [n]^{<n+1}[u\circ s \in A]\}$. 
  
 \smallskip We prove that $B$ is a bar in $\mathcal{N}$.
  
  Let $\alpha$ be given. Define $\zeta$ such that $\zeta(0)=\alpha(0)$ and,  for each $n>0$, \\if $\overline \alpha(n+1)\in [\omega]^{n+1}$, then $\zeta(n)=\alpha(n)$, and, if not, then $\zeta(n)=\zeta(n-1)+1$.  
  
  Note: $\zeta\in [\omega]^\omega$. \\Find $n$ such that $\exists s\in [n]^{<n+1}[\overline\zeta n \circ s \in A]$, and, therefore, $\overline \zeta n \in B$. 
  
   Note: if $\overline \alpha n \neq \overline \zeta n$, then $\overline \alpha n \notin [\omega]^n$, and also: $\overline \alpha n \in B$.
  
  In any case, therefore, $\overline \alpha n \in B$.
  
  \smallskip We thus see: $Bar_\mathcal{N}(B)$. 
  
  Applying Theorem \ref{T:brthesis}, find a stump $S$ such that $Bar_\mathcal{N}(S\cap B)$.

  Note: $S$ secures $A$.
  \end{proof}

   \begin{definition}\label{D:Usecures} Let  a stump $U$ be given and assume $A\subseteq \overline U$ and  $u\in U\cup\overline U$. \\  $u\;\;U$-\emph{secures $A$} if and only if either: $u\in A$ or: $u\in U$ and  $\forall t\in\overline U[u\sqsubset t\rightarrow t\in A]$.
   
   \smallskip Let also a stump $S$ be given.  
    \emph{$S\;\;U$-secures $A$} if and only if \\$\exists m\forall \zeta\in [\omega]^\omega[\zeta(0)>m\rightarrow\exists n[\overline \zeta n \in S\;\wedge\;\exists s \in [n]^{<n+1} [\overline\zeta n\circ s\;\; U$-secures $A]]$.\end{definition} 
   
   \begin{corollary}\label{C:stumpsecurealmostfullU} For all stumps $U$, for each $A\subseteq \overline U$, if $A$ is almost-full, then  there exists a stump $S$ $U$-securing $A$. \end{corollary}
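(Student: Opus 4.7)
The plan is to reduce the statement to Corollary \ref{C:stumpsecurealmostfull} by enlarging $A$ to the collection of all sequences that already $U$-secure it. Concretely, I would define
\[A^\ast:=\{u\in\mathbb{N}\mid u\;\;U\text{-secures }A\},\]
that is, $u\in A^\ast$ iff either $u\in A$ or else $u\in U$ and every $t\in\overline U$ with $u\sqsubset t$ belongs to $A$. The role of $A^\ast$ is to absorb the extra disjunct in Definition \ref{D:Usecures} so that $U$-securing $A$ becomes literally the same as securing $A^\ast$ in the sense of Definition \ref{D:secures}.

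First I would verify that $A^\ast$ is almost-full. This is immediate from the hypothesis on $A$: by the first clause of Definition \ref{D:Usecures}, every element of $A$ trivially $U$-secures $A$, so $A\subseteq A^\ast$. Hence any $\zeta\in[\omega]^\omega$ for which some $s\in[\omega]^{<\omega}$ satisfies $\zeta\circ s\in A$ also satisfies $\zeta\circ s\in A^\ast$, and so $A^\ast$ inherits almost-fullness from $A$.

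Next, I would apply Corollary \ref{C:stumpsecurealmostfull} to $A^\ast$ to obtain a stump $S$ securing $A^\ast$: there exists $m$ such that for every $\zeta\in[\omega]^\omega$ with $\zeta(0)>m$ one can find $n$ and $s\in[n]^{<n+1}$ with $\overline\zeta n\in S$ and $\overline\zeta n\circ s\in A^\ast$. Unwinding the definition of $A^\ast$, the condition $\overline\zeta n\circ s\in A^\ast$ is by construction nothing but the assertion that $\overline\zeta n\circ s$ $U$-secures $A$. Rearranging the inner existential quantifiers (which is a purely syntactic manoeuvre) one arrives at exactly the clause appearing in Definition \ref{D:Usecures}, so $S$ $U$-secures $A$.

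There is no real obstacle here; all the substantive work is inherited from Corollary \ref{C:stumpsecurealmostfull}, which in turn relies on Brouwer's Thesis, Theorem \ref{T:brthesis}. The only delicate point worth flagging in the write-up is the mild mismatch between the quantifier pattern of Definition \ref{D:secures} (with $\exists n\exists s$) and that of Definition \ref{D:Usecures} (with $\exists n[\ldots\wedge\exists s[\ldots]]$); these are of course interderivable, so the enlargement $A\rightsquigarrow A^\ast$ transports the conclusion without loss.
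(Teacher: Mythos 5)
Your proof is correct, and the underlying idea is the same as the paper's: the first clause of Definition \ref{D:Usecures} makes ``$u$ $U$-secures $A$'' a weakening of ``$u\in A$'', so the stump produced by Corollary \ref{C:stumpsecurealmostfull} automatically does the job. The paper's own proof is one line: apply Corollary \ref{C:stumpsecurealmostfull} directly to $A$ to obtain a stump $S$ securing $A$, and observe that any $S$ securing $A$ a fortiori $U$-secures $A$ (because $\overline\zeta n\circ s\in A$ already implies $\overline\zeta n\circ s$ $U$-secures $A$). Your detour through $A^\ast$ is harmless but unnecessary: since $A\subseteq A^\ast$, a stump securing $A$ in particular secures $A^\ast$, so applying the earlier corollary to $A^\ast$ buys nothing beyond what applying it to $A$ already gives. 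You do get a slightly tighter conceptual statement in return—namely that ``$S$ $U$-secures $A$'' is literally ``$S$ secures $A^\ast$''—but that identification is not needed for the corollary. Your remark on the quantifier pattern ($\exists n\exists s$ versus $\exists n[\ldots\wedge\exists s[\ldots]]$) is a fair point that the paper silently elides.
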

   
   \begin{proof} Note: if $S$ secures $A$, then $S\;\;U$-secures $A$ and use  Corollary \ref{C:stumpsecurealmostfull}. \end{proof}
  \begin{theorem}[Intuitionistic Ramsey Theorem]\label{T:irt}\;\begin{enumerate}[\upshape (i)] \item
 $\mathbf{CRT}:$ For all stumps $U$, for all  $A,B\subseteq \overline U$, \\if $A,B$ are both almost-full, then $A\cap B$ is almost-full.

 \item $\mathbf{IRT}:$
  For all $k$, for all  $A,B\subseteq [\omega]^k$, \\if $A,B$ are both almost-full, then $A\cap B$ is almost-full.  \end{enumerate}\end{theorem}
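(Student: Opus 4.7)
The plan is to reduce (ii) to (i) and then prove (i) by stump induction on the securing stumps supplied by Corollary \ref{C:stumpsecurealmostfullU}.

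For (ii), let $U_k$ be the stump defined inductively by $U_0 := \emptyset$ and $U_{k+1} := \{\langle\;\rangle\} \cup \bigcup_n \langle n \rangle \ast U_k$. A short induction shows $\overline{U_k} = \omega^k \supseteq [\omega]^k$, so any $A,B \subseteq [\omega]^k$ may also be viewed as subsets of $\overline{U_k}$ and $\mathbf{IRT}(\overline{U_k})$ immediately delivers $\mathbf{IRT}(k)$.

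For (i), let $A,B \subseteq \overline U$ be almost-full. Applying Corollary \ref{C:stumpsecurealmostfullU} twice yields stumps $S$ and $T$ that $U$-secure $A$ and $B$ respectively. The heart of the argument is a lemma: for all stumps $U,S,T$ and all $A,B \subseteq \overline U$, if $S$ $U$-secures $A$ and $T$ $U$-secures $B$, then $A \cap B$ is almost-full. I would prove this by stump induction (Axiom \ref{A:stpinduction}) on $S$, nested inside a stump induction on $T$; the statement is phrased with $U,A,B$ universally quantified so that the induction hypothesis remains applicable after ``descending'' into a subpiece of $\overline U$.

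For the inductive step I take an arbitrary $\zeta \in [\omega]^\omega$ whose initial value $\zeta(0)$ exceeds the thresholds coming from both securings, use the $U$-securing of $A$ by $S$ to extract an initial segment $\overline\zeta n \in S$ with some $s_A \in [n]^{<n+1}$ such that $u_A := \overline\zeta n \circ s_A$ $U$-secures $A$, and analogously obtain $u_B$ from the securing of $B$ by $T$ (after passing to a common $n$). I then case-split on whether $u_A$ lies in $A$ (i.e.\ in $\overline U$) or in $U$, and similarly for $u_B$. When both witnesses actually lie in $A$ and $B$ respectively, I shift $\zeta$ past all indices used so far and feed the resulting tail into the induction hypothesis on the immediate substumps $S \upharpoonright \langle\cdot\rangle$ and $T \upharpoonright \langle\cdot\rangle$, which are strictly smaller. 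When a witness lands in $U$, every extension of it in $\overline U$ already belongs to the corresponding set, so the problem collapses to a single-set almost-fullness question on a subpiece of $\overline U$ that is itself of the form $\overline{U'}$ for a smaller stump $U'$, where the remaining hypothesis applies.

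The main obstacle is the combinatorial bookkeeping of this triple descent: one must keep $S$, $T$, and $U$ strictly decreasing in the appropriate sense across inductive calls while simultaneously advancing a shift of $\zeta$ whose first value always exceeds every index consumed so far, so that the concatenation of the $s$'s extracted at successive calls remains a single strictly increasing element of $[\omega]^{<\omega}$ witnessing $\zeta \circ s \in A \cap B$. This ``shift-and-extract'' pattern is the constructive replacement for the classical minimal-bad-sequence argument, with the stump hierarchy and Axiom \ref{A:stpinduction} supplying the well-foundedness that the classical proof would draw from the countable ordinals.
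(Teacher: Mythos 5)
Your derivation of (ii) from (i) is correct and matches the paper's (via $U_k=\omega^{<k}$, $\overline{U_k}=\omega^k$), and your reduction of (i) to the lemma about $R(U,S,T)$ via Corollary \ref{C:stumpsecurealmostfullU} is the right first step. But there is a genuine gap in your plan for proving that lemma: you propose a \emph{double} stump induction, on $S$ nested inside $T$, with $U$ merely universally quantified. The paper's proof is by a \emph{triple} induction, and the outermost induction on $U$ is not dispensable. In the inductive step, after fixing $\zeta$, one builds auxiliary sets $C,D\subseteq\overline{U\upharpoonright\langle\zeta(0)\rangle}$ (each containing a ``$\lor\,QED$'' escape clause), proves each is almost-full using the $S$- and $T$-induction hypotheses, and then must intersect $C$ and $D$. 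At that point one re-applies Corollary \ref{C:stumpsecurealmostfullU} to $C$ and $D$ to obtain \emph{fresh} securing stumps $S',T'$ over the smaller index stump $U\upharpoonright\langle\zeta(0)\rangle$; these $S',T'$ bear no size relation to the original $S,T$, so the $S,T$-induction hypotheses give you nothing. What one needs is precisely $P(U\upharpoonright\langle\zeta(0)\rangle)$, i.e.\ the full Clopen Ramsey Theorem for the strictly smaller stump $U\upharpoonright\langle\zeta(0)\rangle$ — and that is only available if $U$ is itself a variable of induction. Merely phrasing the lemma with $U$ universally quantified does not supply this.

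A second, smaller issue: your informal inductive step treats ``$u_A\in A$ and $u_B\in B$'' as if shifting $\zeta$ past the indices used and recursing would stitch these into a single $s$ with $\zeta\circ s\in A\cap B$. But $u_A$ and $u_B$ are typically distinct subsequences of the same initial segment of $\zeta$, and having one element of $A$ and one of $B$ does not give an element of $A\cap B$; the recursion has to be set up so that it manufactures a single common witness, not two. The paper's way around this is indirect: it first proves a weaker statement $(\ast\ast)$ allowing an extra disjunct, then uses the modified sets $A_\zeta,C,D$ with built-in $QED$ clauses so that the recursion can succeed by collapsing to ``we already found a common witness''. Your sketch does not have this mechanism, and it is precisely the mechanism that makes the triple descent terminate with the desired conclusion rather than with two incompatible partial witnesses.
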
 
   \begin{proof} 
   (i) According to Corollaries \ref{C:stumpsecurealmostfull} and \ref{C:stumpsecurealmostfullU}, it suffices to prove: \\for all stumps $U$, $P(U)$, \\where $P(U)$ is the statement:  for all stumps $S$, $Q(U,S)$, \\where $Q(U,S)$ is the statement: for all stumps $T$,  $R(U,S,T)$,\\ where $R( U,S,T)$ is the statement: \\for all $A,B\subseteq \overline U$, if $S$ $U$-secures $A$, and $T$ $U$-secures $B$, then $A\cap B$ is almost-full. 
   
   \smallskip We use Axiom \ref{A:stpinduction} from Subsection \ref{SS:stumps}, the principle of induction on the set $\mathbf{Stp}$ of stumps. Our proof is a proof by \textit{triple induction}. 
   
 \smallskip 1. Note: $\overline \emptyset =\{\langle \; \rangle\}$.
   
   For all $A,B\subseteq\{\langle\;\rangle\}$, if $A,B$ are almost-full, then $A=B=A\cap B=\{\langle\;\rangle\}$. 
   
   We thus see that  $P(\emptyset)$ is true.
   
   \smallskip 2. Let a non-empty stump $U$ be given  such that, for all $n$, $P(U\upharpoonright \langle n\rangle)$.
   
   We intend to prove $P(U)$, that is: for all stumps $S$, $Q(U, S)$.  
   
   We use  again use Axiom \ref{A:stpinduction}. 
   
  2.1.  Note that $Q(U, \emptyset)$ holds a trivial reason:  no subset of $\mathbb{N}$ is secured by $\emptyset$.
   
   \smallskip
  2.2.  Let a non-empty stump $S$ be given such that, for all $n$,  $Q(U,S\upharpoonright \langle n \rangle)$.
   
   We intend to prove: $Q(U, S)$, that is, for all  stumps $T$, $R(U, S,T)$. 
   
   Again, we use Axiom \ref{A:stpinduction}.
   
 \smallskip 2.2.1. Note that $R(U, S, \emptyset)$ holds for a trivial reason: no subset of $\mathbb{N}$ is secured by $\emptyset$.
   
   \smallskip
   2.2.2. Let a non-empty stump $T$ be given such that, for all $n$, $R(U,S, T\upharpoonright\langle n \rangle)$.
   
   We intend to prove: $R(U,S,T)$. 
   
   \smallskip
   Let $A,B\subseteq \overline U$ be given such that $S$ $U$-secures $A$ and $T$ $U$-secures $B$. 
   
   We have to prove that $A\cap B$ is almost-full, i.e. \begin{quote}$(\ast)\;\forall\zeta\in [\omega]^\omega\exists s\in [\omega]^{<\omega}[\zeta\circ s\in A\cap B]$. \end{quote}
   
   It is useful, however, to first prove a slightly weaker statement: \begin{quote} $(\ast\ast)\;\forall\zeta\in [\omega]^\omega\exists s\in [\omega]^{<\omega}[\zeta\circ s\in A\cap B\;\vee\; \\
   (s\neq 0 \;\wedge\;s(0)=0\;\wedge\; \zeta \circ s \in A)]$.
   \end{quote}

    \smallskip  Let $\zeta$ in $[\omega]^\omega$ be given. 
    
   We distinguish two cases.
    
    \smallskip
    \textit{Case (a)}.   $S\upharpoonright \langle \zeta(0)\rangle=\emptyset$. Note:   $\{\langle\;\rangle\}$ $\;U$-secures $A$, and $A=\overline U$ and $A\cap B=B$ is almost-full. One may conclude: $(\ast)$, and, in particular, $\exists s \in [\omega]^{<\omega}[\zeta\circ s \in A\cap B]$. 
  
  \smallskip \textit{Case (b)}.   $S\upharpoonright \langle \zeta(0)\rangle\neq\emptyset$. Then  $\langle \;\rangle \in S\upharpoonright \langle \zeta(0) \rangle$. 
    
    \smallskip
    Define a statement\footnote{$QED$ (again) stands for: \textit{quod \emph{est} demonstrandum, `what we (still) have to prove'}.}\begin{quote} $QED_1:=\exists s\in [\omega]^{<\omega}\setminus\{0\}   [s(0)=0\;\wedge\;\zeta\circ s \in A]$. \end{quote}

   \smallskip 
    Define 
     $A_\zeta:=\{u \in \overline U\mid \neg \exists t[u =\zeta\circ t] \;\vee\; u\in A \;\vee \;QED_1\}$.

    \smallskip Note that, for all $u$ in $U\cup \overline U$, if $\neg\exists t[u=\zeta\circ t]$, then $u$ $\;U$-secures $A$ and also $A_\zeta$.

    \smallskip We now want to prove:   \begin{center} $S\upharpoonright \langle \zeta(0)\rangle$ $U$-secures $A_\zeta$.\end{center} 
     
 \smallskip   First, using the fact that $S$ $U$-secures $A$, find  $m$ such that 
    
    $\forall \eta \in [\omega]^\omega[\eta(0)>m\rightarrow \exists n[\overline \eta n \in S \;\wedge\; \exists s \in [n]^{<n+1}[\overline \eta n \circ s$ $\;U$-secures $ A]]]$.

    Define $q:=\max\bigl(m,\zeta(0)\bigr)$.
    
    Now let $\rho$ in $[\omega]^\omega$ be given such that $\rho(0)>q$.
    
     We want to prove: $\exists n [\overline \rho n \in S\upharpoonright\langle \zeta(0)\rangle\;\wedge\; \exists t \in [\omega]^{<\omega}[\overline\rho n\circ t$ $\;U$-secures  $ A_\zeta]]$.
    
     Consider $\eta:=\langle \zeta(0) \rangle\ast\rho$. 
    
    Find $n,s$ such that     $\overline \eta n \in S$  and $s\in[n]^{<n+1}$ and  $\overline \eta n \circ s$ $\;
    U$-secures  $A$.

    If $s=\langle\;\rangle$,  then $\langle\;\rangle=\rho\circ\langle\;\rangle$ $\;U$-secures $A$ and also $A_\zeta$, 
    
      We thus may assume  $s\neq\langle\;\rangle$. Note:  
 $ \overline \rho(n-1)\in S\upharpoonright\langle \zeta(0)\rangle$. 

There are two cases.

\smallskip \textit{Case (ba)}. $s(0)>0$. Find $u$ such that $length(u)=length(s)$ and \\$\forall i<length(u)[u(i)=s(i)-1]$. \\Note: $\overline \rho(n-1)\circ u=\overline \eta n \circ s$ $\;U$-secures $A$ and also  $A_\zeta$. 

\smallskip \textit{Case (bb)}. $s(0)=0$. 
Find $u$ such that $s =\langle 0\rangle\ast u$. Note: $s(0)=\eta(0)=\zeta(0)$. \\ If $\neg \exists t[\overline \rho n \circ u=\zeta\circ t]$, then also $\neg \exists t[\overline \rho (n-1) \circ u=\zeta\circ t]$ and $\overline \rho(n-1)\circ u$ $\;U$-secures $A$ and also $A_\zeta$.\\
If $\exists t[\overline \rho (n-1)\circ u =\zeta\circ t]$, find $\eta$ in $[\omega]^\omega$ such that $\zeta\circ s\sqsubset \eta$ and $\forall i\exists j[\eta(i)=\zeta(j)]$, so $\eta$ is a subsequence of $\zeta$. Find $n$ such that $\overline \eta n \in \overline U$. As $\zeta\circ s$ $U$-secures $A$, we may conclude: $\overline \eta n \in A$.  Taking $t:=\overline \eta n$, we see: $\exists t[t\neq 0\;\wedge\;t(0)=0 \;\wedge\; \zeta\circ t \in A]$, i.e.  $QED_1$ and we may conclude: $\overline \rho(n-1)\circ \langle\;\rangle=\langle\;\rangle \;U$-secures $A_\zeta$. 

\smallskip

We thus see: for all  $\rho$ in $[\omega]^\omega$, if $\rho(0)>q$, then \\$\exists n [\overline \rho n \in S\upharpoonright\langle \zeta(0)\rangle\;\wedge\; \exists t \in [\omega]^{<\omega}[\overline\rho n\circ t$ $\;U$-secures  $ A_\zeta]]$, that is: \begin{center}$S\upharpoonright\langle \zeta(0)\rangle$ $U$-secures $A_\zeta$. \end{center}

 Also: \begin{center}$T$ $\;U$-secures $B$.\end{center}  Using the assumption $R(U, S\upharpoonright \langle \zeta(0)\rangle, T)$, we conclude:
$A_\zeta \cap B$ is almost-full.

In particular, we may find $s$ in $[\omega]^{<\omega}$ such that $\zeta\circ s \in A_\zeta\cap B$, and therefore, either $\zeta\circ s \in A\cap B$ or $QED_1$.

We thus see: $\forall\zeta\in [\omega]^\omega\exists s\in [\omega]^{<\omega}[\zeta\circ s\in A\cap B\;\vee\; (s\neq 0\;\wedge\;s(0)=0\;\wedge\; \zeta \circ s \in A)]$.

\bigskip Now let $\zeta$ in $[\omega]^\omega$ be given.

Define a statement \begin{center}$QED_2:=\exists s \in [\omega]^{<\omega}[\zeta\circ s \in A\cap B]$. \end{center}
Also define \begin{center}$C:=\{u\in \overline{U\upharpoonright \langle\zeta(0)\rangle}\mid\neg \exists t \in [\omega]^{<\omega}][u=\zeta\circ t]\;\vee\; \langle\zeta(0)\rangle \ast u \in A\;\vee\;QED_2\}$.\end{center} 

We want to prove that $C$ is almost-full.

First, note that, for each $\eta$ in $[\omega]^\omega$, if $\eta(0)=\zeta(0)$ {\it and $\eta$ is a subsequence of $\zeta$}, i.e.: $\forall i\exists j[\eta(i)=\zeta(j)]$,  then either $\exists s\in [\omega]^{<\omega}[\eta\circ s \in A\cap B]$, and, therefore, $QED_2$, or  $\exists s\in [\omega]^{<\omega}\setminus\{0\}[s(0)=0\;\wedge\;\eta\circ s \in A]$, and, therefore, $\exists s \in [\omega]^{<\omega}[\langle\zeta(0)\rangle\ast (\eta\circ s) \in A]$. 

We may even conclude: for \textit{every} $\eta$ in $[\omega]^\omega$ that is a subsequence of $\zeta$, either  $QED_2$, or  $\exists s \in [\omega]^{<\omega}[\langle\zeta(0)\rangle\ast (\eta\circ s) \in A]$, for, given $\eta$ such that $\eta(0)\neq \zeta(0)$ we may apply the previous result to the sequence $\eta^+:=\langle\zeta(0)\rangle\ast\eta$. 

\smallskip

We thus see that, for every $\eta$ in $[\omega]^\omega$ that is a subsequence of $\zeta$, there exists $s$ in $[\omega]^{<\omega}$ such that $\eta \circ s \in C$.

Now let $\eta$ in $[\omega]^\omega$ be given. Define $\eta_\zeta$ in $[\omega]^\omega$ as follows. 
\\If $\exists i[\eta(0)=\zeta(i)]$, define $\eta_\zeta(0)=\eta(0)$, and, if not, define $\eta_\zeta(0) =\zeta(0)$.\\ For each $n$, if $\exists t \in [\omega]^{n+2}[\overline \eta(n+2) = \zeta \circ t]$, then $\eta_\zeta(n+1)=\eta(n+1)$, and, if not, then $\eta_\zeta(n+1)=\zeta(i+1)$, where $i$ satisfies $\eta_\zeta(n)=\zeta(i)$. \\As $\eta_\zeta$ is a subsequence of $\zeta$, find $s$ in $[\omega]^{<\omega}$ such that $\eta_\zeta \circ s \in C$. \\Either $\eta\circ s = \eta_\zeta\circ s \in C$ or $\eta\circ s\neq \eta_\zeta\circ s $. In the latter case, let $n_0$ be the least $n$ such that  $\neg \exists i[\eta(n)=\zeta(i)]$.  Define $\rho$ in $[\omega]^\omega$ such that $\forall n[\rho(n)=n_0+n]$ and find $m$ such that $\overline {\eta\circ\rho}(m) \in \overline{U\upharpoonright \langle\zeta(0)\rangle}$. Define $s:=\overline \rho m$ and   note: $\overline{\eta\circ \rho} (m) = \eta \circ s$ and $\neg \exists u\in [\omega]^{<\omega}[ \eta\circ s = \zeta\circ u]$ and  $\eta\circ s \in C$. 

We thus see that $C$ is almost-full.
 
 We obtained this result using the assumption: 
   $Q(U,S\upharpoonright \langle \zeta(0) \rangle)$.
 
  \smallskip Now define \begin{center}$D:=\{u\in \overline{U\upharpoonright \langle\zeta(0)\rangle}\mid\neg \exists t \in [\omega]^{<\omega}][u=\zeta\circ t]\;\vee\; \langle\zeta(0)\rangle \ast u \in B\;\vee\;QED_2\}$.\end{center}
   
 \smallskip Using the assumption:   $R(U, S, T\upharpoonright\langle \zeta(0)\rangle)$, one may prove: $D$ is almost-full,  by an argument similar to the one that established that  $C$ is almost-full.  
   
   Using the assumption $P(U\upharpoonright\langle\zeta(0)\rangle)$, one then concludes: $C\cap D$ is almost-full.
   
   Find $s$ in $[\omega]^{<\omega}$ such that $\zeta\circ s \in C\cap D$ or $\langle \zeta(0)\rangle \ast\zeta\circ s \in A\cap B$, so, in any case, $QED_2$.

    \smallskip
    Using the assumptions: for all $n$, $P(U\upharpoonright \langle n \rangle)$ (2), and: for all $n$, $Q(U, S\upharpoonright \langle n \rangle)$ (2.2) and: for all $n$, $R(U, S, T\upharpoonright\langle n \rangle)$ (2.2.2), one thus proves: $R(U,S,T)$ , i.e.: for all $A,B\subseteq \overline U$, if $S,T$ $U$-secure $A,B$, respectively, then $A\cap B$ is almost-full, i.e.: $\forall \zeta \in{\omega}^\omega\exists s\in[\omega]^{<\omega}[\zeta \circ s \in A\cap B]$.
    
    Axiom \ref{A:stpinduction} and Corollary \ref{C:stumpsecurealmostfullU} then guarantee the conclusion: $\mathbf{CRT}$.

    \smallskip (ii) This immediately follows from (i), as, for all $k$, $\omega^{<k}$ is a stump and  \\$\overline{\omega^{<k}}=\omega^k$.\footnote{$\mathbf{IRT}$ is proven in \cite[Theorem 7.3]{veldmanbezem} from what is, in this paper, Theorem \ref{T:bimbid}, see also \cite[\S 12]{veldman20b}.  $\mathbf{IRT}(2)$ is obtained in \cite[\S 4]{veldman2004} and \cite[Theorem 9]{veldman2009} from what is, in this paper Theorem \ref{T:brthesis}.}
  \end{proof}
   
   \begin{corollary}\label{C:infrt} For all $k>0$, for all $r>0$,  for all subsets $A_0,A_1, \dots, A_{r-1}$ of $[\omega]^k$, if, for each $j<r$,  $A_j$ is almost-full, then $\bigcap_{j<r} A_j$ is almost-full. \end{corollary}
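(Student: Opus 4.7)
The plan is to prove Corollary \ref{C:infrt} by a straightforward induction on $r$, using Theorem \ref{T:irt}(ii), namely $\mathbf{IRT}(k)$, as the engine that handles the binary intersection step.

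For the base case $r=1$, there is nothing to do: $\bigcap_{j<1} A_j = A_0$, which is almost-full by hypothesis. For the induction step, suppose the statement holds for some $r\ge 1$, and let almost-full subsets $A_0, A_1, \ldots, A_r$ of $[\omega]^k$ be given. By the induction hypothesis applied to $A_0, \ldots, A_{r-1}$, the set $A := \bigcap_{j<r} A_j$ is almost-full. Since $A \subseteq [\omega]^k$ and $A_r \subseteq [\omega]^k$ are both almost-full, we may apply $\mathbf{IRT}(k)$ to conclude that $A \cap A_r = \bigcap_{j<r+1} A_j$ is almost-full, which is what was needed.

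The only point worth checking is that $A = \bigcap_{j<r} A_j$ is genuinely a subset of $[\omega]^k$, so that $\mathbf{IRT}(k)$ applies with $A$ in the r\^ole of one of the two almost-full sets; this is immediate from $A \subseteq A_0 \subseteq [\omega]^k$. There is no real obstacle here: the whole content of the corollary lies in Theorem \ref{T:irt}, and the reduction from $r$ sets to $2$ sets is a routine inductive folding. I would expect the write-up to be a few lines.
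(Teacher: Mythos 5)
Your proof is correct and is precisely what the paper means by its one-line remark ``Straightforward, by induction'': induct on $r$, handling the successor step by applying $\mathbf{IRT}(k)$ to the inductively obtained intersection and the newly added set. Nothing further is needed.
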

   
   \begin{proof} Straightforward, by induction. \end{proof}
   \subsection{The (extended) Finite Ramsey Theorem and \\the `compactness argument'}$\;$
   
    The following Definition extends Definition \ref{D:secures} in Subsection \ref{SS:proofclopenr}. Note that, deviating from the usual treatment of Ramsey theorems, we consider finite increasing sequences of nonnegative integers rather than finite sets of nonnegative integers. 
 \begin{definition}  
     For all $r>0$, $r^{<\omega}:=\{s\mid \forall i<length(s)[s(i)<r]\}$.

   \smallskip For all positive integers  $c,m,k,r$, \\$c:[m]^k\rightarrow r$, \emph{($c$ is an $r$-colouring of $[m]^k)$}, if and only if \\$c\in r^{<\omega}$  and $\forall s \in [m]^k[s <\mathit{length}(c)]$. 
   
   If $c:[m]^k\rightarrow r$, one may consider $c$ as an $r$-colouring of the $k$-element subsets of $m=\{0,1,\ldots,m-1\}$.

   \smallskip
   For all positive integers $k,r,n,M$, $M\rightarrow (n)^k_r$ if and only if,\\ for every $c:[M]^k\rightarrow r$, there exists    $t$ in $[M]^n$  such that \\ $ \exists j<r\forall u \in[n]^k[ c(t\circ u)=j]$, i.e. \emph{$t$ is $c$-monochromatic}. 
   
   If $M\rightarrow (n)^k_r$, then, for every $r$-colouring $c$ of the $k$-element subsets of $M=\{0,1, \ldots, M-1\}$ there exists an $n$-element subset $A=\{t(0), t(1), \ldots, t(n-1)\}$ of $M$ such that all $k$-element subsets of $A$ obtain, from $c$, one and the same colour. 
   
   \smallskip
   For all positive integers $k,r,n,M$, $M\rightarrow_{\ast} (n)^k_r$ if and only if,\\ for every $c:[M]^k\rightarrow r$,  there exist   $t,p$ such that $p\ge n$ and \\$t\in [M]^p$ and $p=t(0)$ and    $ \exists j<r\forall u \in[p]^k[ c(t\circ u)=j]$. 
   
   The meaning of this statement is similar to that of the previous one but now the $p$-element subset $A=\{t(0), t(1), \ldots t(p-1)\}$ of $M$  is required to satisfy the conditions $n\le p$ and $p= \min(A)= t(0)$. A finite set $A$ with the property $\#(A)\ge \min(A)$ is sometimes called \emph{relatively large}, see \cite{parisharrington}.

   \smallskip
   $\mathbf{FRT}$,  the \emph{Finite Ramsey Theorem}, is the statement: $$\forall k \forall r \forall n\exists M[M\rightarrow (n)^k_r].$$ 
   
   \smallskip
   $\mathbf{FRT}_{PH}$, the \emph{Extended Finite Ramsey Theorem}, is the statement: $$ \forall k \forall r \forall n\exists M[M\rightarrow_\ast (n)^k_r].$$ 
   
  \end{definition}

 Like $\mathbf{IRT}_{class}$,    $\mathbf{FRT}$ was proven in 1928 by   F.P.~Ramsey, see \cite{ramsey}.  
   
     Ramsey   thought proving  $\mathbf{IRT}_{class}$ was a useful training for someone who wanted to prove  $\mathbf{FRT}$. $\mathbf{FRT}$ was given an independent proof.
   
   Around 1950,  it was seen that a so-called \textit{compactness argument} proves $\mathbf{FRT}$  from  $\mathbf{IRT}_{class}$, see \cite{debruijnerdos} and \cite[\S 1.7 and \S 6.3]{graham}. 
   
   \smallskip In 1976,  the stronger statement $\mathbf{FRT}_{PH}$
  was formulated   by J.~Paris and L.~Harrington. Also  $\mathbf{FRT}_{PH}$ may be proven (classically) from $\mathbf{IRT}_{class}$ by a compactness argument, see \cite{parisharrington} but   $\mathbf{FRT}_{PH}$, although it may be formulated in the language of \textit{Peano Arithmetic},  can not be proven from the axioms of $PA$.\footnote{There is also no such argument in Heyting Arithmetic $HA$, the intuitionistic analog of $PA$. Every theorem of $HA$ is also a theorem of $PA$.} Paris  and Harrington  thus  found a \textit{mathematical} Incompleteness Theorem, thereby creating a stir in the   logic community. 
  
   The proof of the next Theorem is an intuitionistic version of the compactness argument deriving $\mathbf{FRT}_{PH}$ from $\mathbf{IRT}_{class}$.
   
\begin{definition} For each $r>0$, $r^\omega:=\{\chi\mid \forall i[\chi(i)<r]\}$. \end{definition}

\begin{theorem}\label{T:finiteramsey}   $\mathbf{IRT} \rightarrow \mathbf{FRT}_{PH}$.\end{theorem}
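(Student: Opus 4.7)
The plan is to mimic, in intuitionistic form, the classical compactness proof of $\mathbf{FRT}_{PH}$: where that proof uses K\"onig's lemma and $\mathbf{IRT}_{class}$, I would use the Fan Theorem (for $r^\omega$, a routine extension of Theorem~\ref{T:bft}) together with the positive $\mathbf{IRT}$ now at our disposal. Fix positive integers $k,r,n$. I would encode each $r$-colouring $\chi:[\omega]^k\to r$ as an element of the fan $r^\omega$, extending arbitrarily outside $[\omega]^k$, and then define $B\subseteq r^{<\omega}$ by: $c\in B$ iff, writing $N:=length(c)$, there exist $p$ and $t$ with $n\le p=t(0)$, $t\in[N]^p$, $t\circ u<N$ for every $u\in[p]^k$, and some colour $j<r$ such that $c(t\circ u)=j$ for all $u\in[p]^k$. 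Thus $c\in B$ records that the partial colouring encoded by $c$ already contains a relatively large monochromatic subset in its domain. Membership in $B$ is decidable and monotone under $\sqsubseteq$.

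The central step, and the main obstacle, is to prove that $B$ is a bar in $r^\omega$, equivalently: for every $\chi:[\omega]^k\to r$ there exist $p\ge n$ and $t\in[\omega]^p$ with $t(0)=p$ and $t$ $\chi$-monochromatic. This infinite positive form of Paris--Harrington has to be extracted from $\mathbf{IRT}$, which in its stated form yields only closure of almost-fullness under finite intersection (Corollary~\ref{C:infrt}) rather than single-colour monochromatic witnesses. My intended route is to introduce an auxiliary colouring of $[\omega]^{k+1}$ whose monochromatic $(k+1)$-tuples $(s_0,s_1,\ldots,s_k)$ simultaneously record the $\chi$-colour of $(s_1,\ldots,s_k)$ and ensure that $s_0$ is large enough to serve as the PH-threshold $p$; iterating $\mathbf{IRT}$ via Corollary~\ref{C:infrt} across the $r$ colours then produces, for some $j<r$, a monochromatic configuration whose first coordinate is at least $n$ and which encodes the desired $t$ of length $t(0)$. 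The delicacy is arranging the auxiliary colouring so that the PH-side condition $t(0)=length(t)$ is enforced automatically by monochromaticity, so that the argument runs genuinely constructively rather than being routed through a classical reductio.

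Once the bar claim is in hand, the Fan Theorem applied to $r^\omega$ produces a finite subbar of $B$ and hence a uniform $N$ such that every $c\in r^N$ lies in $B$. Reading off $M$ from $N$ via the pairing that enumerates $[\omega]^k\subseteq\mathbb{N}$, one obtains an $M$ for which every $r$-colouring of $[M]^k$ contains a relatively large monochromatic subset, i.e.\ $M\to_\ast(n)^k_r$, which is $\mathbf{FRT}_{PH}$.
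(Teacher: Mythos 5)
Your overall scaffolding matches the paper's: encode $r$-colourings as elements of the fan $r^\omega$, show that a suitable decidable, $\sqsubseteq$-monotone $B\subseteq r^{<\omega}$ is a bar in $r^\omega$, apply the Fan Theorem to extract a uniform $N$, and read off $M$. You also correctly identify where the real difficulty lies, namely the bar claim $\forall\chi\in r^\omega\exists p\ge n\exists t\in[\omega]^p[t(0)=p\wedge\exists j<r\forall u\in[p]^k[\chi(t\circ u)=j]]$. But the route you propose for that step does not go through.

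You want an auxiliary colouring of $[\omega]^{k+1}$ so that ``iterating $\mathbf{IRT}$ via Corollary~\ref{C:infrt} across the $r$ colours then produces, for some $j<r$, a monochromatic configuration.'' That is precisely what constructive $\mathbf{IRT}$ withholds: it gives only that almost-fullness is preserved under finite intersections. It does not deliver a colour $j$ together with a $\chi$-monochromatic relatively large tuple of colour $j$; that disjunctive output is the content of $\mathbf{IRT}_{class}$, which Subsection~\ref{SS:irt} explicitly flags as reckless. No reshaping of an auxiliary colouring will make $\mathbf{IRT}$ emit such a witness, and the phrase ``produces, for some $j<r$, a monochromatic configuration'' is where the argument would silently smuggle in the classical form. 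The further worry you raise, how to force $t(0)=\mathit{length}(t)$ ``automatically by monochromaticity,'' is likewise a symptom of this wrong turn: the paper does not enforce the Paris--Harrington side condition via the combinatorics of the colouring at all.

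The missing idea is to fold the \emph{goal statement} into the sets fed to $\mathbf{IRT}$. Write $QED$ for the bar claim for the given $\chi$, and for each $j<r$ put $A_j:=\{t\in[\omega]^k\mid\chi(t)\neq j\;\vee\;QED\}$. Each $A_j$ is almost-full by a direct case split: given $\zeta\in[\omega]^\omega$, look at a relatively large initial segment $t=\overline\zeta p$ of $\zeta$ with $p\ge n$ and $t(0)=p$; either all $u\in[p]^k$ have $\chi(t\circ u)=j$, in which case $QED$ holds outright and then $\zeta\circ s\in A_j$ for every $s$, or some $u\in[p]^k$ has $\chi(\zeta\circ u)\neq j$ and so $\zeta\circ u\in A_j$. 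The PH threshold is thus handled by the finite inspection inside the almost-fullness check, not by a combinatorial encoding. Now Corollary~\ref{C:infrt} makes $\bigcap_{j<r}A_j$ almost-full, hence inhabited by some $t$; since $\chi(t)$ equals some $j<r$, the disjunct $\chi(t)\neq j$ fails for that $j$, and so $QED$ must hold. This produces the bar affirmatively, with no appeal to monochromatic extraction and no reductio, after which your Fan Theorem step is exactly right.
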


\begin{proof}\footnote{See \cite[Corollary 9.5.2]{veldmanbezem}.}
  Let $n,k,r$ be given positive integers. Let $\chi$ be an element of $r^\omega$. We want to prove: $$QED_3:=  \exists p\ge n\exists t\in[\omega]^p[t(0) = p \;\wedge \exists j <r\forall u\in [p]^k[\chi(t\circ u) =j]].$$ For each
   $j<r$, define: $A_j:=\{t \in [\omega]^k\mid\chi(t) \neq j \;\vee\; QED_3\}$.
   
   Let $j<r$ and $\zeta$ in $[\omega]^\omega$ be given. Find $p:=\max\bigl( n, \zeta(0)\bigr)$ and define $t:=\overline \zeta p$. Either: $\forall u\in [p]^k[\chi(t\circ u)=j]$ and $QED_3$, or: $\exists u \in [p]^k[\chi(t\circ u)\neq j]$. 
   
We thus see: for each $j<r$, $A_j$ is almost-full.

Using Corollary \ref{C:infrt}, conclude: $\bigcap_{j<r} A_j$ is $k$-almost-full.  

In particular, $\bigcap_{j<r} A_j$ is inhabited and, therefore, $QED_3$.

\smallskip We thus see:
$\forall \chi \in r^\omega\exists p\ge n \exists t\in[\omega]^p[(t(0) = p\;\wedge\;\exists j<r \forall u\in [p]^k[\chi(t\circ u) =j]].$

\smallskip

Now let $B$ be the set of all $c$ in $r^{<\omega}$ such that  $$\exists p\ge n \exists t\in[\omega]^p[t(0)= p\;\wedge\; \exists j<r\forall u\in [p]^k[t\circ u<\mathit{length}(c)\;\wedge\;c(t\circ u) =j]].$$

Note: $Bar_{r^\omega}(B)$ and:  $r^\omega$ is a fan. 
Using the Fan Theorem, Theorem \ref{T:bft}, find $z,m$ such that $length(z)=m$ and $\forall i<m[z(i)\in B]$ and $\forall \chi \in r^\omega\exists i<m[z(i)\sqsubset\chi]$.  

Define $N:=\max_{i<m}length\bigl(z(i)\bigr)$.

   Find $M$ such that $\forall u<N\forall j<length(u)[u(j)<M]$.  
   
   Then: $M\rightarrow_\ast (n)^k_r$. \end{proof}
   
  \section{Borel sets}
  
 \subsection{Some preparations} We introduce the important notion of a {\it spread}. 
   \begin{definition}\label{D:spread} Let $\beta$ be given. $\beta$ is a \emph{spread-law} if and only if \\$\forall s[\beta(s)=0 \leftrightarrow \exists n[\beta(s\ast\langle n \rangle)=0]]$.
   
   $\mathcal{X}\subseteq\mathcal{N}$ is a \emph{spread} if and only if there exists a spread-law $\beta$ such that \\$\mathcal{X}=\mathcal{F}_\beta:=\{\alpha\mid \forall n[\beta(\overline \alpha n)=0]\}$. 
   \end{definition}
   
   Note that  the spread-law $\beta$ in a sense {\it governs} the set $\mathcal{F}_\beta$. The law makes clear  which steps are allowed during the step-by-step construction of an element $\alpha=\alpha(0), \alpha(1), \ldots$ of $\mathcal{F}_\beta$. In his early publications, Brouwer used the term `{\it set, Menge}' for what he later called spreads. This is because the notion expresses his view in what sense a totality like for instance $\mathcal{R}$ might be called a {\it set}. 
   
   Brouwer's Continuity Principle, Axiom \ref{A:bcp}, extends to spreads:
   \begin{theorem}[The extended Continuity Principle]\label{T:extbcp} $\;$
   
   Let $\beta$ be a spread-law and let $R$ be a subset of $\mathcal{F}_\beta \times \mathbb{N}$.
   
 If $\forall \alpha \in \mathcal{F}_\beta\exists n[\alpha Rn]$, then $\forall \alpha\in\mathcal{F}_\beta\exists m \exists n \forall \beta \in \mathcal{F}_\beta[\overline \alpha m \sqsubset \beta  \rightarrow \beta R n]$.\end{theorem}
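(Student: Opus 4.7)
The plan is to reduce this to the unrestricted Continuity Principle (Axiom \ref{A:bcp}) by building a continuous retraction $r : \mathcal{N} \to \mathcal{F}_\beta$ that is the identity on $\mathcal{F}_\beta$.

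First I would define $r$ on finite sequences, inductively. Set $r(\langle\;\rangle) := \langle\;\rangle$. Given a finite sequence $s$ with $\beta\bigl(r(s)\bigr) = 0$ and any $k \in \mathbb{N}$, define
\[
r(s \ast \langle k \rangle) := r(s) \ast \langle k' \rangle,
\]
where $k' := k$ if $\beta\bigl(r(s) \ast \langle k \rangle\bigr) = 0$, and otherwise $k'$ is the least $n$ such that $\beta\bigl(r(s) \ast \langle n \rangle\bigr) = 0$. Such an $n$ exists because $\beta$ is a spread-law: from $\beta\bigl(r(s)\bigr) = 0$ we obtain $\exists n[\beta(r(s) \ast \langle n \rangle) = 0]$. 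By induction on the length of $s$, $\beta\bigl(r(s)\bigr) = 0$ throughout, so the definition never gets stuck. Note also that $\mathit{length}\bigl(r(s)\bigr) = \mathit{length}(s)$ and $r(s) \sqsubseteq r(s \ast \langle k \rangle)$. Extending to $\mathcal{N}$, for each $\alpha \in \mathcal{N}$ let $r(\alpha)$ be the infinite sequence whose initial segments are the $r(\overline\alpha n)$. Then $r(\alpha) \in \mathcal{F}_\beta$, and if already $\alpha \in \mathcal{F}_\beta$, a straightforward induction gives $r(\overline\alpha n) = \overline\alpha n$ for every $n$, so $r(\alpha) = \alpha$. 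Crucially, $\overline{r(\alpha)} n$ depends only on $\overline\alpha n$, so agreement of two elements of $\mathcal{N}$ on an initial segment of length $m$ forces their $r$-images to agree on an initial segment of length $m$ as well.

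Now, given $R \subseteq \mathcal{F}_\beta \times \mathbb{N}$ with $\forall \alpha \in \mathcal{F}_\beta\exists n[\alpha R n]$, define $R^\ast \subseteq \mathcal{N} \times \mathbb{N}$ by $\alpha R^\ast n \leftrightarrow r(\alpha) R n$. Since $r(\alpha) \in \mathcal{F}_\beta$ for every $\alpha$, we have $\forall \alpha \in \mathcal{N}\exists n[\alpha R^\ast n]$. Fix $\alpha \in \mathcal{F}_\beta$; apply Brouwer's Continuity Principle to $R^\ast$ at $\alpha$ to obtain $m,n$ with $\forall \gamma \in \mathcal{N}[\overline\alpha m \sqsubset \gamma \rightarrow r(\gamma) R n]$. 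For any $\gamma \in \mathcal{F}_\beta$ with $\overline\alpha m \sqsubset \gamma$ we have $r(\gamma) = \gamma$, hence $\gamma R n$, which is precisely what was to be shown.

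The only real subtlety is checking that $r$ behaves well: one must verify that the recursive clause is always applicable (using exactly the spread-law condition in Definition \ref{D:spread}) and that $r$ is the identity on $\mathcal{F}_\beta$. Both are routine inductions on the length of the initial segment, so I do not expect any serious obstacle; the heart of the argument is the retraction construction, which is canonical for spreads. Once $r$ is in hand, transporting the hypothesis to $\mathcal{N}$ via $R^\ast$ and reading off the conclusion back inside $\mathcal{F}_\beta$ is immediate.
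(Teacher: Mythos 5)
Your proposal is correct and takes essentially the same approach as the paper: you construct a continuous retraction of $\mathcal{N}$ onto the spread $\mathcal{F}_\beta$ (the paper calls it $\varphi$), transport the hypothesis to all of $\mathcal{N}$ by precomposing $R$ with the retraction, apply Brouwer's Continuity Principle there, and read the conclusion back inside $\mathcal{F}_\beta$ using the fact that the retraction is the identity on $\mathcal{F}_\beta$. Your formulation of the recursive clause — checking $\beta\bigl(r(s)\ast\langle k\rangle\bigr)$ rather than $\beta(s\ast\langle k\rangle)$ — is if anything a touch more transparent than the paper's, which relies implicitly on the downward closure of the spread-tree to guarantee the two checks coincide.
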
 
 
 \begin{proof} Let $\beta$ be a spread-law such that $\beta(\langle\;\rangle)=0$. \\Define $\varphi:\mathcal{N}\rightarrow \mathcal{N}$ such that, for each $\alpha$, for each $n$, \\if $\beta(\overline \alpha\bigl(n+1) \bigr)=0$, then $(\varphi|\alpha)(n)=\alpha(n)$, and, \\if not, then $(\varphi|\alpha)(n)= $ the least $p$ such that $\beta\bigl(\overline{(\varphi|\alpha)}n\ast\langle p\rangle\bigr)=0]$. \\Note: $\forall \alpha[\varphi|\alpha \in  \mathcal{F}_\beta]$ and $\forall \alpha \in \mathcal{F}_\beta[\varphi|\alpha=\alpha]$. \\The function $\varphi$ is called a \textit{retraction} of $\mathcal{N}$ onto $\mathcal{F}_\beta$.
 
 \smallskip Now assume:  $\forall \alpha \in \mathcal{F}_\beta\exists n[\alpha Rn]$. Then $\forall \alpha \exists n[(\varphi|\alpha)Rn]$. Using Axiom \ref{A:bcp} conclude: $\forall \alpha \exists m\forall \beta[\overline \alpha m \sqsubset \beta\rightarrow (\varphi|\beta)Rn]$ and:  $\forall \alpha\in\mathcal{F}_\beta\exists m \exists n \forall \beta \in \mathcal{F}_\beta[\overline \alpha m \sqsubset \beta  \rightarrow \beta R n]$.\end{proof}
 
    \begin{definition} For each $\alpha$ in $\mathcal{N}$, for each $n$, we define $\alpha^n$ in $\mathcal{N}$ by: \\$\forall m[\alpha^n(m)=\alpha\bigl(2^m(2n+1)-1\bigr)]$. \\$\alpha^n$ is called \emph{the $n$-th subsequence of $\alpha$}.
    
    For all $\alpha$, for all $n,j$, we define: $\alpha^{n,j}:=(\alpha^n)^j$.  \end{definition}
   
   The next definition explains how continuous functions fom $\mathcal{N}$ to $\mathbb{N}$ and continuous functions from $\mathcal{N}$ to $\mathcal{N}$ are coded by elements of $\mathcal{N}$.
   
   \begin{definition} For each $\varphi$, we define: $\varphi:\mathcal{N}\rightarrow\mathbb{N}$ if and only if $\forall \alpha\exists n[\varphi(\overline \alpha n)\neq 0]$.

For each $\varphi$ such that $\varphi:\mathcal{N}\rightarrow\mathbb{N}$, for each $\alpha$, we let $\varphi(\alpha)$ be the number $ p$ such that $\exists n[\varphi(\overline \alpha n)=p+1]\;\wedge\;\forall i<n[\varphi(\overline \alpha i)=0]]$. 

For each $\varphi$, we define: $\varphi:\mathcal{N}\rightarrow\mathcal{N}$ if and only if $\forall n[\varphi^n:\mathcal{N}\rightarrow \mathbb{N}]$. 

For each $\varphi$ such that $\varphi:\mathcal{N}\rightarrow\mathcal{N}$, for each $\alpha$, we let $\varphi|\alpha$ be the element $ \beta$ of $\mathcal{N}$ such that $\forall n[\beta(n)=\varphi^n(\alpha)]$.\end{definition}

 The following notion plays a key r\^ole in intuitionistic {\it descriptive set theory}, the theory of Borel and projective sets.
 
\begin{definition}[Reducibility]\footnote{In classical descriptive set theory, the reducibility notion introduced here is known as {\it Wadge-reducibility}, see \cite[Section 21.E]{kechris}. One may compare this notion to the notion of \textit{many-one reducibility} from recursion theory.} 

\smallskip For all $\mathcal{X}, \mathcal{Y}\subseteq \mathcal{N}$, for all $\varphi:\mathcal{N}\rightarrow\mathcal{N}$, we define: $\varphi$ \emph{reduces $\mathcal{X}$ to $\mathcal{Y}$}, if and only if $\forall \alpha[\alpha \in \mathcal{X}\leftrightarrow \varphi|\alpha \in \mathcal{Y}]$.   

For all $\mathcal{X}, \mathcal{Y}\subseteq \mathcal{N}$, we define: $\mathcal{X}\preceq \mathcal{Y}$, $\mathcal{X}$ \emph{reduces to} $\mathcal{Y}$, if and only if there exists $\varphi:\mathcal{N}\rightarrow \mathcal{N}$ reducing $\mathcal{X}$ to $\mathcal{Y}$.

\end{definition}
If $\varphi$ reduces $\mathcal{X}$ to $\mathcal{Y}$, then, for each $\alpha$, the question \begin{center} Does $\alpha$ belong to $\mathcal{X}$? \end{center} is reduced to the question: \begin{center} Does $\varphi|\alpha$ belong tot $\mathcal{Y}$? \end{center} 
\subsection{Borel sets of finite rank}We first introduce open subsets and closed subsets of  $\mathcal{N}=\omega^\omega$. 

 \begin{definition} 
 For each $\beta$, we define $\mathcal{G}^1_\beta:=\{\alpha\mid\exists n[\beta(\overline \alpha n)\neq 0]\}$ and \\$\mathcal{F}^1_\beta:=\mathcal{N}\setminus\mathcal{G}^1_\beta=\{\alpha\mid \forall n[\beta(\overline \alpha n)=0]\}$.
 
 We define $\mathcal{E}_1:=\{\alpha\mid\exists n[\alpha(n)\neq 0]\}$ and\\
  $\mathcal{A}_1:=\mathcal{N}\setminus \mathcal{E}_1 =\{\alpha\mid \forall n[\alpha(n)=0]\}=\{\underline 0\}$.

  $\mathcal{X}\subseteq \mathcal{N}$ is \emph{open}, or: $\mathbf{\Sigma}^0_1$, if and only if $\exists \beta[\mathcal{X}=\mathcal{G}^1_\beta]$, \\and \emph{closed}, or: $\mathbf{\Pi}^0_1$, if and only if $\exists \beta[\mathcal{X}=\mathcal{F}^1_\beta]$. \end{definition}

  Note that spreads, as introduced in Definition \ref{D:spread}, are closed subsets of $\mathcal{N}$.  It is not true that every closed subset of $\mathcal{N}$ is a spread. A closed subset $\mathcal{F}$ of $\mathcal{N}$ is a spread if and only if it is {\it located}, i.e. $\exists \beta\forall s[\exists \alpha \in \mathcal{F}[s\sqsubset\alpha]\leftrightarrow \beta(s)=0]$.
  
  Note that, if $\mathcal{G}\subseteq\mathcal{N}$ is open, then its complement $\mathcal{N}\setminus\mathcal{G}$ is a closed subset of $\mathcal{N}$. It is not true that the complement of a closed subset of $\mathcal{N}$ always is an open subset of $\mathcal{N}$.

 \smallskip We now introduce Borel sets of finite rank. Note that we avoid the operation of taking the complement of a given Borel set.
 \begin{definition}
 
 \smallskip For each $n>0$, for each $\beta$,  we define, inductively, \\$\mathcal{G}^{n+1}_\beta=\bigcup_m \mathcal{F}^n_{\beta^m}$, and $\mathcal{F}_\beta^{n+1}=\bigcap_m \mathcal{F}^n_{\beta^m}$. 
  
  For each $n>0$, we define, inductively,\\ $\mathcal{E}_{n+1}:=\{\alpha \mid \exists m[\alpha^m \in \mathcal{A}_n]\}$ and $\mathcal{A}_{n+1}:=\{\alpha \mid \forall m[\alpha^m \in \mathcal{E}_n]\}$.
  
  For each $n>0$, $\mathcal{X}\subseteq \mathcal{N}$ is $\mathbf{\Sigma}^0_n$ if and only if $\exists \beta[\mathcal{X}=\mathcal{G}^n_\beta]$ and \\$\mathcal{X}$ is $\mathbf{\Pi}^0_n$ if and only if $\exists \beta[\mathcal{X}=\mathcal{F}^n_\beta]$.
 \end{definition} 
 
 $\mathcal{X}\subseteq \mathcal{N}$ belongs to $\mathbf{\Sigma}^0_{n+1}$  if and only if there exists an infinite sequence $\mathcal{Y}_0, \mathcal{Y}_1, \ldots$ of elements of $\mathbf{\Pi}^0_n$ such that $\mathcal{X}=\bigcup_m \mathcal{Y}_m$.
 
 $\mathcal{X}\subseteq \mathcal{N}$ belongs to $\mathbf{\Pi}^0_{n+1}$  if and only if there exists an infinite sequence $\mathcal{Y}_0, \mathcal{Y}_1, \ldots$ of elements of $\mathbf{\Sigma}^0_n$ such that $\mathcal{X}=\bigcap_m \mathcal{Y}_m$.

 \subsection{The fine structure of the Borel hierarchy}One may prove that the union of two opens sets is open again but it is not true that the union of two closed sets is a closed sets itself.  It is useful to introduce the following operation.

 \begin{definition} For each $n>0$, for each $\mathcal{X}\subseteq \mathcal{N}$, we define \\$\mathbb{D}^n(\mathcal{X})=\{\alpha\mid\exists i<n[\alpha^i \in \mathcal{X}]\}$. 
  \end{definition}
  
  The next Theorem, Theorem \ref{T:borelsimple}, offers an example of  a union of two closed subsets of $\mathcal{N}$ that is not an intersection of countably many open subsets of $\mathcal{N}$ and, therefore, certainly not a closed subset of $\mathcal{N}$, see item (iii).  Theorem \ref{T:borelsimple}(iv) makes it clear that there are unions of three closed sets not coinciding with any union of two closed sets, and unions of four closed sets not coinciding with any union of three closed sets, and so on.

  \begin{theorem}\label{T:borelsimple} \begin{enumerate}[\upshape (i)]\item For all $\mathcal{X}\subseteq \mathcal{N}$, $\mathcal{X}$ is $\mathbf{\Pi}^0_1$ if and only if $\mathcal{X}\preceq \mathcal{A}_1$.
   \item For all $\mathcal{X}\subseteq \mathcal{N}$, for all $n>0$, $X\preceq \mathbb{D}^n(\mathcal{A}_1)$ if and only if there exists $\mathbf{\Pi}^0_1$ sets $\mathcal{F}_0, \mathcal{F}_1, \ldots, \mathcal{F}_{n-1}$ such that $\mathcal{X}=\bigcup_{i<n}\mathcal{F}_i$.
  \item $\mathbb{D}^2(\mathcal{A}_1)$ is not $\mathbf{\Pi}^0_2$. \item For all $n>0$, $\mathbb{D}^{n+1}(\mathcal{A}_1)\npreceq \mathbb{D}^n(\mathcal{A}_1)$. \end{enumerate}\end{theorem}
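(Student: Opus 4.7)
For (i), the plan is a direct construction in each direction. Given $\mathcal{X} = \mathcal{F}^1_\beta$, I would define $\varphi:\mathcal{N}\to\mathcal{N}$ such that $(\varphi|\alpha)(n)$ equals $0$ whenever $\beta(\overline\alpha n)=0$ and equals $1$ otherwise; then $\varphi|\alpha = \underline 0 \leftrightarrow \alpha \in \mathcal{X}$, yielding $\mathcal{X}\preceq\mathcal{A}_1$. Conversely, from a reduction $\varphi$ of $\mathcal{X}$ to $\mathcal{A}_1$, the equivalence $\alpha\in\mathcal{X}\leftrightarrow\forall n[(\varphi|\alpha)(n)=0]$ directly exhibits $\mathcal{X}$ as a $\mathbf{\Pi}^0_1$-set.

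For (ii), I would build on (i) by stacking reductions into subsequences. Given $\mathcal{X}=\bigcup_{i<n}\mathcal{F}_i$ with each $\mathcal{F}_i\preceq\mathcal{A}_1$ via $\varphi_i$, define a reduction $\varphi$ by $(\varphi|\alpha)^i=\varphi_i|\alpha$ for $i<n$; then $\varphi|\alpha\in\mathbb{D}^n(\mathcal{A}_1)\leftrightarrow\exists i<n[\alpha\in\mathcal{F}_i]\leftrightarrow\alpha\in\mathcal{X}$. Conversely, given a reduction $\varphi$, set $\mathcal{F}_i:=\{\alpha\in\mathcal{N}:(\varphi|\alpha)^i=\underline 0\}$ for $i<n$; each $\mathcal{F}_i$ is a preimage of $\mathcal{A}_1$ under a continuous map, hence $\mathbf{\Pi}^0_1$ by (i), and $\mathcal{X}=\bigcup_{i<n}\mathcal{F}_i$.

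For (iii), I would argue by contradiction: suppose $\mathbb{D}^2(\mathcal{A}_1) = \bigcap_m \mathcal{G}^1_{\beta^m}$. The two spreads $\mathcal{H}_j := \{\alpha\in\mathcal{N} : \alpha^j = \underline 0\}$ for $j<2$ are contained in $\mathbb{D}^2(\mathcal{A}_1)$, hence in every $\mathcal{G}^1_{\beta^m}$. Applying the extended Continuity Principle (Theorem \ref{T:extbcp}) at the point $\underline 0$ of each spread yields, for every $m$ and $j<2$, moduli $K_m^{(j)}$ and $n_m^{(j)}$ such that $\beta^m$ marks every finite sequence $s$ of length $n_m^{(j)}$ that starts with $K_m^{(j)}$ zeros and has zero entries at all positions of the $j$-th subsequence within $s$. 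The plan is then to construct explicitly an $\alpha^*\in\mathcal{N}$ with $\alpha^{*0}\neq\underline 0$ and $\alpha^{*1}\neq\underline 0$ (placing the $1$'s at carefully chosen large positions in each subsequence) such that for every $m$ at least one of the two pattern constraints is met by the corresponding initial segment of $\alpha^*$; this would force $\alpha^*\in\bigcap_m\mathcal{G}^1_{\beta^m}$ while $\alpha^*\notin\mathbb{D}^2(\mathcal{A}_1)$, the contradiction.

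For (iv), I would reformulate via (ii): suppose $\mathbb{D}^{n+1}(\mathcal{A}_1)=\bigcup_{l<n}\mathcal{F}_l$ with each $\mathcal{F}_l$ being $\mathbf{\Pi}^0_1$. For each $i\leq n$, the spread $\mathcal{H}_i:=\{\alpha:\alpha^i=\underline 0\}$ is contained in $\mathbb{D}^{n+1}(\mathcal{A}_1)$, and Theorem \ref{T:extbcp} applied at $\underline 0$ to the relation $\alpha R l\Leftrightarrow\alpha\in\mathcal{F}_l$ yields a modulus $K_i$ and a label $l(i)<n$ such that every $\alpha\in\mathcal{H}_i$ starting with $K_i$ zeros lies in $\mathcal{F}_{l(i)}$. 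Pigeonhole applied to the map $i\mapsto l(i)$ from $\{0,1,\ldots,n\}$ into $\{0,1,\ldots,n-1\}$ produces distinct $i_1<i_2\leq n$ with $l(i_1)=l(i_2)=:l$, and a construction analogous to the one in (iii) then produces $\alpha^*\in\mathcal{N}$ with $\alpha^{*j}\neq\underline 0$ for every $j\leq n$ (so $\alpha^*\notin\mathbb{D}^{n+1}(\mathcal{A}_1)$) whose every relevant initial segment admits an extension into $\mathcal{H}_{i_1}$ respecting $K_{i_1}$ or into $\mathcal{H}_{i_2}$ respecting $K_{i_2}$, forcing $\alpha^*\in\mathcal{F}_l$ and contradicting $\alpha^*\notin\mathbb{D}^{n+1}(\mathcal{A}_1)$. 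The main obstacle in both (iii) and (iv) will be the combinatorial construction of $\alpha^*$: the continuity moduli need not be uniformly bounded in $m$ (respectively in $i$), so $\alpha^*$ must be built by a careful diagonalization that places its nonzero entries far enough out to meet all the relevant finite-sequence compatibility conditions simultaneously.
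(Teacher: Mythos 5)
Parts (i) and (ii) are fine: the paper leaves them to the reader, and your two-direction constructions are the standard ones. The interesting content is in (iii) and (iv), and there your plan has a genuine gap which, to your credit, you have put your finger on yourself — but flagging it does not close it, and I do not think your diagonalization can be made to work.

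For (iii): you apply the extended Continuity Principle once for each pair $(m,j)$, obtaining infinitely many moduli $K^{(j)}_m, n^{(j)}_m$, and then you want to build a single point $\alpha^*$ with nonzero entries in both subsequences $0$ and $1$ that nevertheless lies in every $\mathcal{G}^1_{\beta^m}$. But $\alpha^*$ has exactly two distinguished finite positions where it is nonzero. For each $m$ you need the initial segment of $\alpha^*$ to match a pattern accepted by $\beta^m$ via one of the two spreads $\mathcal{H}_j$, which forces that initial segment, up to length $n^{(j)}_m$, to be zero on all $j$-subsequence coordinates. Since $m$ ranges over all of $\mathbb{N}$ and the moduli $n^{(j)}_m$ need not be bounded, as soon as $m$ is large enough that both $n^{(0)}_m$ and $n^{(1)}_m$ exceed the positions of your two nonzero entries, neither of your continuity certificates applies, and there is simply no guarantee that $\beta^m$ fires on any initial segment of $\alpha^*$. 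Placing the nonzero entries ``far enough out'' does not help, because ``far enough'' would have to beat all $m$ simultaneously, and there is no finite position that does so. The same objection applies, mutatis mutandis, to your proposed diagonalization in (iv).

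The paper's proof of (iii) sidesteps the diagonalization entirely. It introduces the spread $\mathcal{T}$ of all $\alpha$ with at most one nonzero value and shows, under the assumption that $\mathbb{D}^2(\mathcal{A}_1) = \bigcap_m \mathcal{G}^1_{\beta^m}$, that the whole of $\mathcal{T}$ is contained in $\mathbb{D}^2(\mathcal{A}_1)$. The trick is an approximation argument: for $\alpha \in \mathcal{T}$ one forms $\alpha_0, \alpha_1$ by zeroing out subsequence $0$, respectively $1$; both manifestly lie in $\mathbb{D}^2(\mathcal{A}_1)$, and — this is the point your plan is missing — every finite initial segment of $\alpha$ coincides with the corresponding initial segment of $\alpha_0$ or of $\alpha_1$, because $\alpha$ has at most one nonzero entry. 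This transfers each bar condition $\exists n[\beta^m(\overline{\alpha_i}n)\neq 0]$ to $\alpha$ itself without ever deciding where, or whether, $\alpha$ has a nonzero entry. Having established $\forall\alpha\in\mathcal{T}\,\exists i<2[\alpha^i=\underline 0]$, a single application of the extended Continuity Principle at $\underline 0$ then yields a fixed $m$ and $i<2$ with $\forall\alpha\in\mathcal{T}[\overline{\underline 0}m\sqsubset\alpha\rightarrow\alpha^i=\underline 0]$, which is immediately refuted by an $\alpha\in\mathcal{T}$ that is zero up to $m$ and places its single nonzero entry later in subsequence $i$. No point $\alpha^*$ outside $\mathbb{D}^2(\mathcal{A}_1)$ is ever constructed; the contradiction comes from applying continuity to the union $\mathcal{H}_0\cup\mathcal{H}_1$ restricted to the spread $\mathcal{T}$, on which membership in that union cannot be continuously witnessed. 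The proof of (iv) runs along the same lines, using $\mathcal{T}$, the decomposition $\mathbb{D}^{n+1}(\mathcal{A}_1)=\bigcup_{i<n+1}\mathcal{B}_i$, one application of continuity in each $\mathcal{B}_i$, the pigeonhole principle to find $i<j$ with $k_i=k_j$, and a final application of continuity on $\mathcal{T}$. You should rewrite (iii) and (iv) around the spread $\mathcal{T}$; the reduction of (iv) to a union-of-closed-sets statement via your (ii) is a sensible first move, but the pigeonhole step must be followed by the $\mathcal{T}$-based continuity argument rather than by an explicit diagonal construction.
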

  \begin{proof} The proofs of (i) and (ii) are left to the reader.

  \smallskip (iii) Define $\mathcal{T}:=\{\alpha\mid\forall m\forall n[\bigl(\alpha(m)\neq 0\;\wedge\;\alpha(n)\neq 0\bigr)\rightarrow m=n]\}$.\\ $\mathcal{T}$ is the set of all $\alpha$ that assume at most one time a value different from $0$. \\Note that $\mathcal{T}$ is a spread.

  Assume  $\mathbb{D}^2(\mathcal{A}_1)$ is $\mathbf{\Pi}^0_2$. Find $\beta$  such that $\mathbb{D}^2(\mathcal{A}_1)=\{\alpha\mid \forall m \exists n[ \beta^m(\overline \alpha n)\neq 0]\}$.

  Let $\alpha$ in $\mathcal{T}$ be given. Let $m$ be given. Define $\alpha_0, \alpha_1$ such that, for all $n$, for both  $i<2$, $(\alpha_i)^i=\underline 0$ and, for all $s$, if  $\neg\exists p[s=\langle i, p\rangle]$, then $\alpha_i(n)=\alpha(n)$.  \\Note: for both $i<2$, $\alpha^i \in \mathbb{D}^2(\mathcal{A}_1)$. Also note: $\forall n[\overline \alpha n =\overline{\alpha_0}n\;\vee\;\overline\alpha n =\overline{\alpha_1}n]$. \\Find $n_0, n_1$ such that, for both $i<2$,  $\beta^m(\overline \alpha_i n_i)\neq 0$.  Note: either $\overline \alpha_0 n_0\sqsubset \alpha$ or $\overline \alpha_1 n_1\sqsubset \alpha$, and, in both cases, $\exists n [
  \beta^m(\overline{\alpha}n)\neq 0]$. 
  
  We thus see: $\forall \alpha\in\mathcal{T}[\alpha \in \mathbb{D}^2(\mathcal{A}_1)]$. Using Theorem \ref{T:extbcp}, find $m,i$ such that $i<2$ and $\forall \alpha[\overline{\underline 0}m\sqsubset \alpha\rightarrow \alpha^i=\underline 0]$. This gives a contradiction as one may find $\alpha$ in $\mathcal{T}$ such that $\overline{\underline 0}m \sqsubset \alpha$ and $\alpha^i\neq\underline 0$. 
  
  We may conclude: $\mathbb{D}^2(\mathcal{A}_1)$ is not $\mathbf{\Pi}^0_2$. 
  
  \smallskip (iv) Again, consider $\mathcal{T}:=\{\alpha\mid\forall m\forall n[\bigl(\alpha(m)\neq 0\;\wedge\;\alpha(n)\neq 0\bigr)\rightarrow m=n]\}$.
  \\Let $n>0$ be given such that $\mathbb{D}^{n+1}(\mathcal{A}_1)\preceq \mathbb{D}^{n}(\mathcal{A}_1)$. \\ Find $\beta$ such that $\mathbb{D}^{n+1}(\mathcal{A}_1)=\{\alpha\mid\exists i<n\forall j[\beta^i(\overline \alpha j)=0]\}$. \\For each $i$, define $\mathcal{B}_i:=\{\alpha\mid\alpha^i=\underline 0\}$. Note that each $\mathcal{B}_i$ is a spread.\\ Also note: $\mathbb{D}^{n+1}(\mathcal{A}_1)= \bigcup_{i<n+1}\mathcal{B}_i$. \\Using Theorem \ref{T:extbcp}, find, for each $i<n+1$, $m_i$ and $k_i<n$ such that \\$\forall \alpha \in \mathcal{B}_i[\underline{\overline 0}m_i\sqsubset\alpha\rightarrow \forall n[\beta^{k_i}(\overline \alpha n)=0]]$. 
  
  Find $i,j< n+1$ such that $i<j$ and $k_i=k_j$. Note: for all $\alpha$ in $\mathcal{T}$, either $\overline{\underline 0}m_i\sqsubset\alpha$ or $\overline{\underline 0}m_i\sqsubset\alpha$, so, in any case: $\forall n[\beta^{k_i}(\overline\alpha n =0]$ and  $\alpha \in \mathbb{D}^{n+1}(\mathcal{A}_1)$. 
  
   We thus see: $\forall \alpha\in\mathcal{T}[\alpha \in \mathbb{D}^{n+1}(\mathcal{A}_1)]$. Using Theorem \ref{T:extbcp}, find $m,i$ such that $i<n+1$ and $\forall \alpha[\overline{\underline 0}m\sqsubset \alpha\rightarrow \alpha^i=\underline 0]$. This gives a contradiction as one may find $\alpha$ in $\mathcal{T}$ such that $\overline{\underline 0}m \sqsubset \alpha$ and $\alpha^i\neq\underline 0$. 
   \end{proof} 
   
   Theorem \ref{T:borelsimple} gives an inkling of the fine structure of the intuitionistic Borel hierarchy. In fact, even within the class $\mathbf{\Sigma}^0_2$, there are {\it uncountably many } \textit{degrees of reducibility}, see \cite[Theorems 3.5 and 3.9]{veldman2009}.  
   
   \subsection{The Borel hierarchy theorem}The next Theorem, Theorem \ref{T:borelsecondlevel}, makes a start with establishing the Borel hierarchy itself. 
   
  \begin{theorem}\label{T:borelsecondlevel} \begin{enumerate}[\upshape (i)] \item For all $n>0$, for all  $\mathcal{X}\subseteq\mathcal{N}$, $\mathcal{X}$ is $\mathbf{\Sigma}^0_n$ if and only if $\mathcal{X}\preceq \mathcal{E}_n$, and $\mathcal{X}$ is $\mathbf{\Pi}^0_n$ if and only if $\mathcal{X}\preceq \mathcal{A}_n$. 
  
  \item For every $\varphi:\mathcal{N}\rightarrow\mathcal{N}$, there exists $\alpha$ such that, \\for all $m$,  $\alpha^m\in \mathcal{E}_1\leftrightarrow (\varphi|\alpha)^m \in \mathcal{E}_1$, and, therefore, \\$\alpha \in \mathcal{A}_2 \leftrightarrow \varphi|\alpha \in \mathcal{A}_2$ and  $\alpha \in \mathcal{E}_2 \leftrightarrow \varphi|\alpha \in \mathcal{E}_2$. \item For every $\varphi:\mathcal{N}\rightarrow\mathcal{N}$,\\if $\forall \alpha[\alpha \in \mathcal{E}_2\rightarrow \varphi|\alpha \in \mathcal{A}_2]$, then $\exists \alpha[\alpha \in \mathcal{A}_2\;\wedge\;\varphi|\alpha\in \mathcal{A}_2]$. \item For every $\varphi:\mathcal{N}\rightarrow\mathcal{N}$, if $\forall \alpha[\alpha \in \mathcal{A}_2\rightarrow \varphi|\alpha \in \mathcal{E}_2]$, then $\exists \alpha[\alpha \in \mathcal{E}_2\;\wedge\;\varphi|\alpha\in \mathcal{E}_2]$. \end{enumerate} \end{theorem}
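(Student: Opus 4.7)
My plan proceeds through the four parts in order, with (ii) serving as the pivot for (iii) and (iv).

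For (i), the argument is by induction on $n$, proving in parallel that each $\mathcal{E}_n$ (resp.\ $\mathcal{A}_n$) itself lies in $\mathbf{\Sigma}^0_n$ (resp.\ $\mathbf{\Pi}^0_n$) and that the Borel classes are closed under continuous preimages by maps $\varphi:\mathcal{N}\rightarrow\mathcal{N}$. At level $n=1$, an explicit $\varphi$ reduces $\mathcal{G}^1_\beta$ to $\mathcal{E}_1$ by encoding the first nonzero value of $\beta$ on initial segments; the $\mathbf{\Pi}^0_1$ case is dual. The step uses the recursive definitions: given $\mathcal{X}=\mathcal{G}^{n+1}_\beta=\bigcup_m \mathcal{F}^n_{\beta^m}$, assemble reductions $\varphi_m$ of $\mathcal{F}^n_{\beta^m}$ to $\mathcal{A}_n$ (from the induction hypothesis) into a single $\varphi$ with $(\varphi|\alpha)^m=\varphi_m|\alpha$, yielding a reduction of $\mathcal{X}$ to $\mathcal{E}_{n+1}$; $\mathcal{F}^{n+1}_\beta$ is symmetric.

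For (ii), the heart is a diagonal self-referential construction. Define $\alpha$ by recursion: at position $n=2^j(2m+1)-1$, set $\alpha(n):=1$ precisely when the already-known prefix $\overline\alpha n$ forces $(\varphi|\alpha)^m$ to have a dec-nonzero value (that is, there exist $j_0$ and $n_0\le n$ with $\varphi^{m,j_0}(\overline\alpha n_0)\ge 2$ and no shorter prefix first gave the value $1$) AND $j$ is the first position in subsequence $m$ where this test passes; otherwise $\alpha(n):=0$. The test is decidable from $\overline\alpha n$, so the recursion is sound. The forward direction of $\alpha^m\in\mathcal{E}_1\leftrightarrow(\varphi|\alpha)^m\in\mathcal{E}_1$ holds by construction; the reverse uses the semantics of the continuous $\varphi$—any positive witness is determined by some finite prefix of $\alpha$ and so the trigger eventually fires in subsequence $m$. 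Quantifying $\forall m$ and $\exists m$ yields the stated corollaries for $\mathcal{A}_2$ and $\mathcal{E}_2$.

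For (iii), merely applying (ii) plus the hypothesis gives $\neg(\alpha_0\in\mathcal{E}_2)$, which is strictly weaker than $\alpha_0\in\mathcal{A}_2$; I therefore construct $\alpha^*$ directly as a pointwise limit of approximations. Start with $\alpha_0=\underline 0\in\mathcal{E}_2$; at stage $s$ the current $\alpha_s$ has $1$'s only at previously placed positions $p_0,\ldots,p_{s-1}$ (one per subsequence $0,\ldots,s-1$), so $\alpha_s\in\mathcal{E}_2$ and by hypothesis $\varphi|\alpha_s\in\mathcal{A}_2$. Extract finite continuity-witnesses $(j_m^{(s)},l_m^{(s)})$ for each $m\le s$ (reusing earlier choices whenever still valid, so $L_s:=\max_{m\le s}l_m^{(s)}$ is non-decreasing), then adjoin a $1$ at a position $p_s$ in subsequence $s$ with $p_s\ge L_s$. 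The limit $\alpha^*$ agrees with $\alpha_{m+1}$ on its first $l_m^{(m)}$ positions for every $m$, preserving each witness and yielding $\varphi|\alpha^*\in\mathcal{A}_2$; simultaneously every subsequence of $\alpha^*$ contains the $1$ at $p_m$, so $\alpha^*\in\mathcal{A}_2$.

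For (iv), the dual is the main obstacle. Membership in $\mathcal{E}_2$ is a $\mathbf{\Sigma}^0_2$-condition whose witness $(\varphi|\alpha)^m=\underline 0$ requires the infinite conjunction $\forall j[\varphi^{m,j}(\alpha)=0]$, which no finite-prefix continuity argument can preserve. A naive limit from $\underline 1\in\mathcal{A}_2$ inward fails because the witness indices $m_s$ for $(\varphi|\alpha_s)^{m_s}=\underline 0$ may vary with $s$ and the target condition is not locally finite. My plan is to apply (ii) to get $\alpha_0$ with $\alpha_0\in\mathcal{E}_2\leftrightarrow\varphi|\alpha_0\in\mathcal{E}_2$, and then invoke the extended Continuity Principle (Theorem \ref{T:extbcp}) on a suitable spread contained in $\mathcal{A}_2$ (for example $\{\alpha : \alpha(2m)=1 \text{ for all } m\}$) to harvest uniform finite witnesses from the hypothesis, combining these with the fixed-point structure of $\alpha_0$ to force $\alpha_0\in\mathcal{E}_2$. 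The crucial step—turning the infinitary $\forall j$ into something finitely witnessable so that a single global witness $m$ valid at the limit can be extracted—is likely to require an appeal to Brouwer's Thesis on bars in $\mathcal{N}$ (Theorem \ref{T:bimbid}) to convert the $\mathbf{\Pi}^0_2$ hypothesis into a stump-bounded description of the witness map.
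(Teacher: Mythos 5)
Your treatment of (i) and (ii) is sound and, for (ii), matches the paper's diagonal recursion: $\alpha$ is built so that $\alpha^m$ acquires a nonzero entry exactly when the prefix of $\alpha$ already built forces the $m$-th subsequence of $\varphi|\alpha$ to acquire one. For (iii), your limit construction is a genuinely different route and, with the bookkeeping you describe (placing each new $1$ past $L_s$ so every earlier continuity witness survives), it does go through. But you discarded the direct route prematurely: the paper does not stop at the negative consequence of (ii). Taking the fixed point $\alpha$ from (ii), for each $m$ it forms $\alpha_0$ by replacing $\alpha^m$ with $\underline 0$; then $\alpha_0\in\mathcal{E}_2$, so by hypothesis $\varphi|\alpha_0\in\mathcal{A}_2$ and one may find $n$ with $(\varphi|\alpha_0)^m(n)\neq 0$. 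Now decide whether $(\varphi|\alpha)^m(n)\neq 0$: if so, $\alpha^m\in\mathcal{E}_1$ by (ii); if $(\varphi|\alpha)^m(n)=0$, then $\varphi|\alpha$ is apart from $\varphi|\alpha_0$, hence by continuity $\alpha$ is apart from $\alpha_0$, which (since they differ only on the $m$-th subsequence) forces $\alpha^m$ apart from $\underline 0$. Either way $\alpha^m\in\mathcal{E}_1$, so $\alpha\in\mathcal{A}_2$ positively, with no limit construction.

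The genuine gap is (iv): what you offer is a plan, and the plan heads in the wrong direction. Using (ii) plus the hypothesis yields only $\neg(\alpha_0\in\mathcal{A}_2)$, which is not $\alpha_0\in\mathcal{E}_2$; and the perturbation trick of (iii) does not dualize, because making one coordinate of $\varphi|\alpha_0$ change only certifies that some $(\varphi|\alpha_0)^n$ has a nonzero entry, whereas $\mathcal{E}_2$-membership needs some entire subsequence to be $\underline 0$, a $\mathbf{\Pi}^0_1$ fact that no finite amount of information establishes. Brouwer's Thesis on bars is also not the tool here. The paper avoids (ii) altogether for (iv). It introduces the operation $\sigma\Join\alpha$ which forces $(\sigma\Join\alpha)^m\bigl(\sigma(m)\bigr)\geq 1$ for all $m$, so that $\alpha\mapsto\alpha^0\Join\alpha^1$ maps every $\alpha$ in $\mathcal{N}$ into $\mathcal{A}_2$. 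The hypothesis then gives a total relation $\forall\alpha\exists n\bigl[\bigl(\varphi|(\alpha^0\Join\alpha^1)\bigr)^n=\underline 0\bigr]$ on all of $\mathcal{N}$, to which Axiom~\ref{A:bcp} applies at $\underline 0$, yielding $m,n$ such that $\overline{\underline 0}m\sqsubset\alpha$ forces $\bigl(\varphi|(\alpha^0\Join\alpha^1)\bigr)^n=\underline 0$. Defining $\beta\in\mathcal{C}$ with $\beta^i(j)\neq 0$ exactly when $i<m$ and $j=0$ gives $\beta\in\mathcal{E}_2$, and for each $j$ one matches $\beta$ against some $\alpha^0\Join\alpha^1$ on a long enough prefix to conclude $(\varphi|\beta)^n(j)=0$, hence $(\varphi|\beta)^n=\underline 0$ and $\varphi|\beta\in\mathcal{E}_2$. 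That is the argument; it is essentially absent from your proposal. The asymmetry between (iii), which is elementary, and (iv), which unavoidably uses the Continuity Principle, is exactly what the remark following the theorem is pointing out, and it is a distinction your sketch does not yet register.
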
\begin{proof}  (i) The proof is left to the reader.
  
  \smallskip (ii) Let $\varphi:\mathcal{N}\rightarrow\mathcal{N}$ be given. Define $\alpha$, inductively,  as follows. \\Let $p$ be given such that $\alpha(0), \alpha(1), \ldots, \alpha(p-1)$ have been defined already.\\ Find $m,n$ such that such that $p= 2^n(2m+1)-1$. \\ Define $\alpha(p)=\alpha^m(n):=1$ {\it if} $\exists q<p[\varphi^m(\overline \alpha q) >1\;\wedge\;\forall i<q[\varphi^m(\overline \alpha i)=0]]$,\\ and $\alpha(p):=0$ {\it if not}.
  
    Note that, for all $m$, $\exists n[\alpha^m(n)\neq 0] \leftrightarrow \exists n[(\varphi|\alpha)^m (n)\neq 0]$, and, therefore, \\ $\alpha^m\in\mathcal{E}_1\leftrightarrow (\varphi|\alpha)^m \in \mathcal{E}_1$ and $\alpha^m\in\mathcal{A}_1\leftrightarrow (\varphi|\alpha)^m\in \mathcal{A}_1$. 
  
  \smallskip (iii) Let $\varphi:\mathcal{N}\rightarrow\mathcal{N}$ be given such that $\forall \alpha[\alpha \in \mathcal{E}_2\rightarrow\varphi|\alpha \in \mathcal{A}_2]$. \\Using (i), find $\alpha$ such that $\forall m[\alpha^m\in \mathcal{E}_1\leftrightarrow (\varphi|\alpha)^m \in \mathcal{E}_1]$. \\Let $m$ be given. \\Define $\alpha_0$ such that $(\alpha_0)^m=\underline 0$ and, for all $n\neq m$, $(\alpha_0)^n=\alpha^n$. \\Note: $\alpha_0\in \mathcal{E}_2$, and, therefore,   $\varphi|\alpha_0 \in \mathcal{A}_2$ and $(\varphi|\alpha_0)^m\in \mathcal{E}_1$. \\Find $n$ such that $(\varphi|\alpha_0)^m(n)\neq 0$.  \\{\it Either} $(\varphi|\alpha)^m(n)\neq 0$ and $(\varphi|\alpha)^m \in \mathcal{E}_1$ and also $\alpha^m \in \mathcal{E}_1$, \\\textit{or} $(\varphi|\alpha_0)^m(n)=0 $ and $\varphi|\alpha \perp \varphi|\alpha_0$ and $\alpha\perp\alpha_0$ and $\alpha^m\perp \underline 0$ and $\alpha^m \in \mathcal{E}_1$. 
  \\We thus see: $\forall m[\alpha^m \in \mathcal{E}_1]$. Therefore,   both $\alpha$ and $\varphi|\alpha$ are in $\mathcal{A}_2$. 
  
  \smallskip (iv) The following observation is easy but crucial:
  
  \begin{center} $\forall \alpha[ \exists \sigma \forall m[\alpha^m\bigl(\sigma(m)\bigr)\neq 0]\rightarrow \alpha \in \mathcal{A}_2]$. \end{center}

  For each $\alpha$, for each $\sigma$, we let $\sigma\Join\alpha$ be the element of  $\mathcal{N}$ such that, for each $m$,  $(\sigma\Join\alpha)^m\bigl(\sigma(m)\bigr)=\max\bigl(1, \alpha^m\bigl(\sigma(m))\bigr)$ and, for all $n \neq \sigma(m)$,\\ $(\sigma\Join\alpha)^m(n)= \alpha^m(n)]$. $\sigma\Join\alpha$ may be thought of as `$\alpha$\textit{-corrected-by-$\sigma$}'.\\Note: $\forall \alpha[ \exists \sigma[\alpha=\sigma\Join\alpha]\rightarrow \alpha \in \mathcal{A}_2]$. 
  
  \smallskip Now let $\varphi:\mathcal{N}\rightarrow\mathcal{N}$ be given such that $\forall \alpha[\alpha \in \mathcal{A}_2\rightarrow\varphi|\alpha \in \mathcal{E}_2]$.
  \\ Conclude: $\forall \alpha\exists n[\bigl((\varphi|(\alpha^0\Join\alpha^1)\bigr)^n=\underline 0]$. 
  \\Using Axiom \ref{A:bcp}, find $m,n$ such that $\forall \alpha[\underline{\overline0}m\sqsubset\alpha \rightarrow \bigl((\varphi|(\alpha^0\Join\alpha^1)\bigr)^n=\underline 0]$.
  \\Define $\beta$ in $\mathcal{C}$ such that $\forall i\forall j [\beta^i(j)=1\leftrightarrow (i<m \;\wedge\;j=0)]$. 
    \\Let $j$ be given.  Find $p$ such that $\varphi^{n,j}(\overline \beta p) \neq 0$ and  $\forall i<p[\varphi^{n,j}(\overline \beta i)=0]$.
 \\Find $\alpha$ such that $\overline{\underline 0}m\sqsubset \alpha$ and $\overline \beta p\sqsubset \alpha^0 \Join \alpha^1$. 
  \\Conclude: $(\varphi|\beta)^n(j)= \bigl(\varphi|(\alpha^0\Join\alpha^1)\bigr)^n(j) =0$ and $\varphi^{n,j}(\overline\beta p)=1$. 
  \\We thus see:    $\forall j[(\varphi|\beta)^n(j)=0]$ and $(\varphi|\beta)^n=\underline 0$ and $\varphi|\beta \in \mathcal{E}_2$. 
  \\We thus found $\beta$ such that both $\beta$ and $\varphi|\beta$ are in $\mathcal{E}_2$.
  \end{proof} Note that the proofs of Theorem \ref{T:borelsecondlevel}(ii) and (iii) are elementary in the sense that they do not use intuitionistic axioms like the Continuity Principle or the Fan Theorem. 
   The constructive argument for Theorem \ref{T:borelsecondlevel}(ii) extends to a constructive argument establishing \begin{quote} {\it For each $n>0$, for each $\varphi:\mathcal{N}\rightarrow \mathcal{N}$, there exists $\alpha$ such that \\$\alpha\in \mathcal{E}_n\leftrightarrow \varphi|\alpha \in \mathcal{E}_n$ and $\alpha\in \mathcal{A}_n\leftrightarrow \varphi|\alpha \in \mathcal{A}_n$.}\end{quote}
  Unfortunately, one can't conclude from this, constructively, that $\mathcal{A}_n$ does not reduce to $\mathcal{E}_n$. Assuming that $\varphi:\mathcal{N}\rightarrow\mathcal{N}$ reduces $\mathcal{A}_n$ to $\mathcal{E}_n$, one finds, using Theorem \ref{T:borelsecondlevel}(ii), $\alpha$ such that $\alpha \in \mathcal{A}_n\leftrightarrow \alpha \in \mathcal{E}_n$. If $n>1$, one can't derive a contradiction from this statement.  
  
  The intervention of the Continuity Principle in the proof of Theorem \ref{T:borelsecondlevel}(iv) is crucial. One may extend this argument to a proof of: \begin{quote} {\it For each $n>0$, for each $\varphi:\mathcal{N}\rightarrow \mathcal{N}$, \\if $\forall \alpha[\alpha \in \mathcal{E}_n\rightarrow \varphi|\alpha \in \mathcal{A}_n]$, then $\exists \alpha[\alpha \in \mathcal{A}_n\;\wedge\;\varphi|\alpha\in \mathcal{A}_n]$,  and, \\  if $\forall \alpha[\alpha \in \mathcal{A}_n\rightarrow \varphi|\alpha \in \mathcal{E}_n]$, then $\exists \alpha[\alpha \in \mathcal{E}_n\;\wedge\;\varphi|\alpha\in \mathcal{E}_n]$. }\end{quote}
  
  This shows that $\mathcal{E}_n$ \textit{positively refuses to reduce to} $\mathcal{A}_n$, and $\mathcal{A}_n$ positively refuses to reduce to $\mathcal{E}_n$.
  
  The theorem extends into the transfinite and then may be called the \textit{Intuitionistic Borel Hierarchy Theorem}, see \cite[Theorems 7.9 and 7.10]{veldman2008}.

  \section{Notions of finiteness}\label{S:finiteness} \subsection{Some examples} We  study decidable subsets of the set $\mathbb{N}$.
  
  Such sets may be called `finite' in various constructively different ways.
  
  \begin{definition}For every $\alpha$, we define $D_\alpha:=\{n\mid\alpha(n)\neq 0\}$.
  
  $D_\alpha$ is called \textit{the subset of $\mathbb{N}$ \emph{decided by $\alpha$}}.

  \smallskip $\mathbf{Fin}:=\{\alpha\mid\exists n\forall m>n[\alpha(m)=0]\}$.
  
  $D_\alpha$ is \emph{finite} if and only if $\alpha\in \mathbf{Fin}$.\end{definition}
  
  Note: $\alpha\in\mathbf{Fin}$  if and only if we can calculate the number of elements of $D_\alpha$. 
  
   \begin{definition} For every $\mathcal{X}\subseteq \mathcal{N}$, we define:\\ $\mathcal{X}^+:=\{\alpha\mid\exists n\forall m>n \forall n[\alpha(m)\neq 0\rightarrow \alpha\in \mathcal{X}]\}$.
   
   $D_\alpha$ is \emph{perhaps-finite}\footnote{Cf. Subsection \ref{SS:mmp}.} if and only if $\alpha \in \mathbf{Fin}^+$,

   $D_\alpha$ is \emph{perhaps-perhaps-finite} if and only if $\alpha \in \mathbf{Fin}^{++}=(\mathbf{Fin}^+)^+$, \end{definition}
 
  Some examples are useful. 
  
  \smallskip 1. Define $\alpha$ such that $\forall n[\alpha(n)\neq 0\leftrightarrow n=k_{99}]$. Then $\{k_{99}\}=\{n\mid n=k_{99}\}$ is a decidable subset of $\mathbb{N}$. The statement `$\alpha \in \mathbf{Fin}$' is equivalent to `$\exists n[n=k_{99}]$ or $\forall n[n<k_{99}]$' and thus is \textit{reckless} or \textit{hardy}. We do know $D_\alpha$ has at most one element, but we do not know if the number of elements of $D_\alpha$ is $0$ or $1$.
  
  On the other hand, $\alpha\in \mathbf{Fin}^+$. For assume we find $m$ such that $\alpha(m)\neq 0$, Then $m=k_{99}$ and $D_\alpha=\{m\}$ is finite. Therefore: $\forall m[\alpha(m)\neq 0
  \rightarrow \alpha\in \mathbf{Fin}]$ and: $\alpha \in \mathbf{Fin}^+$.
  
  \smallskip 2. Define $\alpha$ such that $\forall n[\alpha(n)\neq 0\leftrightarrow k_{99}\le n <2\cdot k_{99}]$. Again, the statement `$\alpha \in \mathbf{Fin}$' is reckless, and again, $D_\alpha$ is perhaps-finite, although, unlike in the case of example 1, we are unable to give an upper bound for the number of elements of $D_\alpha$.
  
  \smallskip 3. Let $\gamma,\delta$ be given and define $\alpha$ such that $\forall n[\alpha(n)\neq 0\leftrightarrow (n=k_\gamma\;\vee\;n=k_\delta)]$, so $D_\alpha=\{k_\gamma, k_\delta\}$. The reader may find out herself that $D_\alpha$ is perhaps-perhaps-finite and that the statement: `$D_\alpha$ is perhaps-finite' may be \textit{reckless}.
  
 \subsection{Extension into the transfinite}The process of taking so-called `\textit{perhapsive extensions}' of the set $\mathbf{Fin}$ may be continued into the transfinite.

   \begin{definition} We define a collection $\mathcal{E}$ of subsets of $\mathcal{N}$ by means of the following inductive definition. \begin{enumerate}[\upshape (i)] \item $\mathbf{Fin} \in \mathcal{E}$. \item For every $\mathcal{X}$ in  $\mathcal{E}$, also $\mathcal{X}^+\in \mathcal{E}$. \item For every infinite sequence $\mathcal{X}_0, \mathcal{X}_1, \ldots$ of elements of $\mathcal{E}$, such that, for each $n$, $(\mathcal{X}_n)^+\subseteq \mathcal{X}_{n+1}$, also $\bigcup_n\mathcal{X}_n \in \mathcal{E}$. \item Clauses (i), (ii), (iii) produce all elements of $\mathcal{E}$. 
   \end{enumerate}
     \end{definition}
    
    \begin{definition}\smallskip  
  
   For every $\mathcal{X}\subseteq\mathcal{N}$, $\mathcal{X}^\neg:=\mathcal{N}\setminus\mathcal{X}:=\{\alpha\mid \neg(\alpha \in \mathcal{X})\}$ and \\$\mathcal{X}^{\neg\neg}=(\mathcal{X}^\neg)^\neg$. \end{definition}
    
     \begin{theorem}\label{T:finiteness}\begin{enumerate}[\upshape (i)] \item For all $\mathcal{X}$ in $\mathcal{E}$, $\mathbf{Fin}\subseteq \mathcal{X}\subseteq \mathbf{Fin}^{\neg\neg}$. \item For all $\mathcal{X}$ in $\mathcal{E}$, for all $\alpha$, $\forall s[\alpha \in \mathcal{X}\leftrightarrow s\ast \alpha \in \mathcal{X}]$. \item For all $\mathcal{X}$ in $\mathcal{E}$, $\mathcal{X}\subsetneq\mathcal{X}^+$. \end{enumerate} \end{theorem}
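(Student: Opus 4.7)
My plan is to prove all three items simultaneously by induction on the inductive construction of $\mathcal{E}$ (the base clause $\mathbf{Fin}\in\mathcal{E}$, the perhapsive step $\mathcal{X}\mapsto\mathcal{X}^+$, and the monotone union step). I read the definition of $\mathcal{X}^+$ as $\{\alpha\mid\exists n\forall m>n[\alpha(m)\neq 0\rightarrow\alpha\in\mathcal{X}]\}$, equivalently $\{\alpha\mid\exists n[(\exists m>n[\alpha(m)\neq 0])\rightarrow\alpha\in\mathcal{X}]\}$; the small preliminary observations are that $\mathcal{X}\mapsto\mathcal{X}^+$ is monotone and that $\mathbf{Fin}\cup\mathcal{X}\subseteq\mathcal{X}^+$, both immediate from the definition.

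For (i), the base inclusions $\mathbf{Fin}\subseteq\mathbf{Fin}\subseteq\mathbf{Fin}^{\neg\neg}$ are immediate since $A\subseteq A^{\neg\neg}$ always. For the perhapsive step, only $\mathcal{X}^+\subseteq\mathbf{Fin}^{\neg\neg}$ needs work. Given $\alpha\in\mathcal{X}^+$ with witness $n$, I assume for reductio $\neg(\alpha\in\mathbf{Fin})$. Since $\alpha(m)=0$ is decidable in $m$, this yields $\neg\neg\exists m>n[\alpha(m)\neq 0]$. The contrapositional law from Subsection \ref{SS:useful}, applied to the defining implication of $\alpha\in\mathcal{X}^+$, then gives $\neg\neg(\alpha\in\mathcal{X})$, which combined with the IH $\mathcal{X}\subseteq\mathbf{Fin}^{\neg\neg}$ yields $\neg\neg\neg\neg(\alpha\in\mathbf{Fin})$; this collapses to $\neg\neg(\alpha\in\mathbf{Fin})$ by Brouwer's first law of logic, contradicting the reductio hypothesis. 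The union step is componentwise.

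For (ii), the base $\mathbf{Fin}$ case holds because prepending a finite block $s$ only shifts any tail-vanishing witness for $\alpha\in\mathbf{Fin}$ by $length(s)$. For the perhapsive step, using the IH shift-invariance of $\mathcal{X}$, a witness $n$ for $\alpha\in\mathcal{X}^+$ furnishes the witness $n+length(s)$ for $s\ast\alpha\in\mathcal{X}^+$; conversely, any witness $n'$ for $s\ast\alpha\in\mathcal{X}^+$ serves for $\alpha\in\mathcal{X}^+$ after reindexing, because beyond position $\max(n',length(s))$ the tail of $s\ast\alpha$ coincides with a tail of $\alpha$, and the IH permits interchanging $s\ast\alpha\in\mathcal{X}$ with $\alpha\in\mathcal{X}$. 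The union step is immediate.

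For (iii), $\mathcal{X}\subseteq\mathcal{X}^+$ is clear, so the task is to refute $\mathcal{X}^+\subseteq\mathcal{X}$. The base case $\mathbf{Fin}\subsetneq\mathbf{Fin}^+$ uses the fugitive $\alpha$ of Example 1 with $\alpha(n)\neq 0\leftrightarrow n=k_{99}$: one has $\alpha\in\mathbf{Fin}^+$ unconditionally (take witness $n=0$), but $\alpha\in\mathbf{Fin}$ would deliver a bound $N$ beyond which $\alpha$ vanishes, and a finite inspection of the first $N{+}1$ digits of $\pi$ would then decide $\exists j[j=k_{99}]\vee\forall j[j<k_{99}]$, an instance of $\mathbf{WLPO}$ refuted under $\mathbf{BCP}$ by Theorem \ref{T:notlpo}. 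For the perhapsive step I would follow the pattern of Example 3, combining the previous separator with a fresh, logically independent fugitive number (such as $k_{88}$, $k_{77}$, etc.) to build an element of $\mathcal{Y}^{++}$ whose membership in $\mathcal{Y}^+$ would again force a reckless disjunction; the union step composes separators from the approximating $\mathcal{X}_n$. The main obstacle is (iii): the inductive and union steps require a systematic supply of pairwise-independent fugitive numbers, and, at the transfinite union stages, a diagonalisation over the fugitives already consumed, together with a careful verification that the newly constructed separator does not quietly collapse back into the lower class through some unnoticed logical equivalence. This bookkeeping is the delicate heart of the argument, and an alternative is to replace the explicit separators by a direct $\mathbf{BCP}$ argument applied to the family $\gamma\mapsto\alpha_\gamma$ of sequences with at most one, fugitive-positioned nonzero entry.
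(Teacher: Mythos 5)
Your arguments for (i) and (ii) are sound and follow the same induction on the construction of $\mathcal{E}$ as the paper. For (i) you use a reductio: from $\neg(\alpha\in\mathbf{Fin})$ and the defining implication of $\alpha\in\mathcal{X}^+$ you extract $\neg\neg(\alpha\in\mathcal{X})$, push this through the inductive hypothesis and Brouwer's first law of logic, and contradict the reductio hypothesis. The paper instead makes a decidable case split on $\exists n>m[\alpha(n)\neq 0]\vee\neg\exists n>m[\alpha(n)\neq 0]$ and appeals to the remark in Subsection \ref{SS:useful} that one may assume $P\vee\neg P$ when proving a $\neg\neg$-statement. These two routes are logically equivalent, and yours is if anything slightly tidier. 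Your (ii) is the same index-shifting argument as the paper's.

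Your (iii) has a real gap. The assertion $\mathcal{X}\subsetneq\mathcal{X}^+$ demands a \emph{genuine refutation} of $\mathcal{X}^+\subseteq\mathcal{X}$, i.e.\ a proof of $\neg(\mathcal{X}^+\subseteq\mathcal{X})$. Your proposed base step argues that $\mathbf{Fin}^+\subseteq\mathbf{Fin}$ would settle, for the single $\pi$-derived fugitive $\alpha$, the disjunction $\exists j[j=k_{99}]\vee\forall j[j<k_{99}]$; but this is only a \emph{reckless} conclusion, not an absurdity. The paper is careful to keep these apart: a reckless statement has no constructive proof, yet neither do we have a proof of its negation, and Theorem \ref{T:notlpo} refutes only the \emph{universally quantified} $\mathbf{WLPO}$ under $\mathbf{BCP}$, not the single $\pi$-instance you invoke. (Also, the instance you name is of $\mathbf{LPO}$, not $\mathbf{WLPO}$.) To turn recklessness into contradiction one must quantify over a whole spread of sequences and apply the extended Continuity Principle. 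This is what the paper does: it takes $\mathcal{T}$, the spread of all $\alpha$ with at most one nonzero entry, shows $\mathcal{T}\subseteq\mathbf{Fin}^+$, and, assuming $\mathcal{T}\subseteq\mathbf{Fin}$, uses Theorem \ref{T:extbcp} to extract a finite modulus from which an explicit counterexample $\alpha:=\overline{\underline 0}(n+1)\ast\langle 1\rangle\ast\underline 0$ produces a contradiction. For the successor and limit steps, the paper does not juggle a growing stock of independent fugitive numbers; instead it propagates a witnessing \emph{spread} $\mathcal{F}$ with $\mathcal{F}\subseteq\mathcal{X}^+$, $\neg(\mathcal{F}\subseteq\mathcal{X})$, and manufactures the next witness $\mathcal{F}^\ast:=\{\alpha\mid\forall n\forall p\forall\beta[\alpha=\overline{\underline 0}n\ast\langle p{+}1\rangle\ast\beta\rightarrow\beta\in\mathcal{F}]\}$ (and a diagonal version at limits), again invoking Theorem \ref{T:extbcp} together with the shift-invariance from (ii). Your closing sentence already gestures at precisely this alternative, but the main body of your (iii) — a supply of pairwise-independent $k_{88},k_{77},\dots$ and ``diagonalisation over fugitives consumed'' — does not deliver the negations the theorem asserts, and at limit ordinals the bookkeeping you foresee as delicate is in fact a dead end: you would need uncountably many mutually independent fugitives, which is exactly what one avoids by arguing over spreads.
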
 \begin{proof}\footnote{See \cite{veldman1995}, \cite{veldman1999} and \cite{veldman2005}.} (i) The proof is by induction, following the definition of the class $\mathcal{E}$. 
     
   \smallskip  1. Obviously, $\mathbf{Fin}\subseteq \mathbf{Fin}^{\neg\neg}$, see Subsection \ref{SS:useful}. 
     
  \smallskip   2. Assume $\mathbf{Fin}\subseteq \mathcal{X}\subseteq\mathbf{Fin}^{\neg\neg}$. Assume $\alpha \in \mathcal{X}^+$. \\
     Find $m$ such that $\forall n>m[\alpha(n)\neq 0 \rightarrow \alpha \in \mathcal{X}$ and distinguish two cases: 
     
     \textit{Case (a)}. $\exists n>m[\alpha(n)\neq 0]$. Then $\alpha \in \mathcal{X}$ and, therefore: $\alpha \in \mathbf{Fin}^{\neg\neg}$.
     
     \textit{Case (b)}. $\neg \exists n>m[\alpha(n) \neq 0]$. Then $\forall n>m[\alpha(n)=0]$, so $\alpha \in \mathbf{Fin}\subseteq\mathbf{Fin}^{\neg\neg}$. 
     
     We thus see: if $\exists n>m[\alpha(n)\neq 0]\;\vee\;\neg\exists n>m[\alpha(n)\neq 0]$, then $\alpha \in \mathbf{Fin}^{\neg\neg}$. 
     
      We may conclude: $\alpha \in \mathbf{Fin}^{\neg\neg}$, see Subsection \ref{SS:useful}. 
      
      We thus see: $\forall \alpha \in \mathcal{X}^+[\alpha \in \mathbf{Fin}^{\neg\neg}]$, and conclude: $\mathbf{Fin}\subseteq\mathcal{X}\subseteq\mathcal{X}^+\subseteq \mathbf{Fin}^{\neg\neg}$. 
      
    \smallskip  3. Assume: for all $n$, $\mathbf{Fin}\subseteq \mathcal{X}_n\subseteq\mathbf{Fin}^{\neg\neg}$. Clearly, then $\mathbf{Fin}\subseteq \bigcup_n \mathcal{X}_n\subseteq\mathbf{Fin}^{\neg\neg}$.

   \medskip (ii) We again use induction on the definition of the class $\mathcal{E}$. 
   
   \smallskip 1.  Note: $\forall \alpha\forall s[\alpha \in \mathbf{Fin}\leftrightarrow s\ast\alpha\in \mathbf{Fin}]$. 
   
   \smallskip 2. Let $\mathcal{X}$ in $\mathcal{E}$ be given such that $\forall \alpha\forall s[\alpha \in \mathcal{X}\leftrightarrow s\ast\alpha \in \mathcal{X}]$. Let $s,\alpha$ be given. If $s\ast\alpha \in \mathcal{X}^+$, find $m$ such that $\forall n>m[s\ast \alpha(n)\neq 0\rightarrow s\ast\alpha \in \mathcal{X}]$ and conclude: for all $n>m$, if $\alpha(n)\neq 0$ then $length(s)+n >m$ and $s\ast \alpha \in \mathcal{X}$ and $\alpha \in \mathcal{X}$, i.e. $\alpha \in \mathcal{X}^+$. Conversely, if $\alpha \in \mathcal{X}^+$, find $m$ such that $\forall n>m[\alpha(n)\neq 0\rightarrow \alpha \in \mathcal{X}]$ and conclude: for all $n>m+length(s)$, if $s\ast\alpha(n)\neq 0$, then $\alpha\bigl(n-length(s)\bigr)\neq 0$ and $n-length(s)>m$ and $\alpha\in \mathcal{X}$ and $s\ast \alpha \in \mathcal{X}$, i.e. $s\ast\alpha \in \mathcal{X}^+$.
   
   \smallskip 3. Assume: for all $n$, $\forall \alpha\forall s[\alpha\in \mathcal{X}_n \leftrightarrow s\ast\alpha\in \mathcal{X}_n]$. \\Clearly, then $\forall \alpha\forall s[\alpha\in \bigcup_n\mathcal{X}_n \leftrightarrow s\ast\alpha\in \bigcup_n\mathcal{X}_n$.
   
  \medskip  (iii)  We shall  prove: for all $\mathcal{X}$ in $\mathcal{E}$ there exists a spread $\mathcal{F}$ such that $\mathcal{F}\subseteq \mathcal{X}^+$ and not $\mathcal{F}\subseteq \mathcal{X}$. The proof is by induction, following the definition of the class $\mathcal{E}$.
    
    \smallskip 1. Consider $\mathcal{T}=\{\alpha\mid\forall m\forall n[\bigl(\alpha(m)\neq 0\;\wedge\;\alpha(n)\neq 0\bigr)\rightarrow m=n]\}$.
 \\Note that $\mathcal{T}$ is a spread. Note that, for every $\alpha$ in $\mathcal{T}$, for every $n$, if $\alpha(n)\neq 0$, then $\forall m>n[\alpha(m)=0]$ and $\alpha \in \mathbf{Fin}$. We thus see: $\mathcal{T}\subseteq \mathbf{Fin}^+$
 
  Now assume $\mathcal{T}\subseteq \mathbf{Fin}$. Then $\forall \alpha \in \mathcal{T}\exists n\forall m>n[\alpha(m)=0]]$. \\Using Theorem \ref{T:extbcp}, find $p,n$ such that $\forall \alpha \in \mathcal{T}[\overline{\underline 0}p\sqsubset\alpha\rightarrow \forall m>n[\alpha(m)=0]]$. \\ Conclude: $n+1\ge p$ and consider $\alpha:=\overline{\underline 0} (n+1)\ast\langle 1 \rangle\ast \underline 0$.
  \\Note: $\alpha \in \mathcal{T}$,  $\overline{\underline 0}p\sqsubset \alpha$ and $\alpha(n+1)\neq 0$.\\ Clearly, we reached a contradiction. 
  
  \smallskip 2. Let $\mathcal{X}$ in $\mathcal{E}$ be given and let $\mathcal{F}$ be a spread such that $\mathcal{F}\subseteq \mathcal{X}^+$ and not $\mathcal{F}\subseteq \mathcal{X}$. Define $\mathcal{F}^\ast:=\{\alpha\mid \forall n \forall p\forall \beta[\alpha=\overline{\underline 0}n\ast\langle p+1\rangle\ast \beta \rightarrow \beta \in \mathcal{F}]\}$. \\Note: $\mathcal{F}^\ast$ is a spread and $\underline 0 \in \mathcal{F}^\ast$.  \\Note that, for every $\alpha$ in $\mathcal{F}^\ast$, for every $n$, if $\underline{\overline 0}n\sqsubset\alpha$ and $\alpha(n)\neq 0$, then there exists $\beta$ in $\mathcal{F}$ such that $\alpha=\overline\alpha(n+1)\ast\beta$, and by (ii), also $\alpha \in \mathcal{F}$ and $\alpha\in \mathcal{X}^+$. \\We thus see: $\mathcal{F}^\ast\subseteq \mathcal{X}^{++}$.
  
  Now assume $\mathcal{F}^\ast \subseteq \mathcal{X}^+$. \\Then $\forall \alpha \in \mathcal{F}^\ast\exists n\forall m>n[ \alpha(m)\neq 0\rightarrow \alpha \in \mathcal{X}]$. Using Theorem \ref{T:extbcp}, \\find $p,n$ such that $\forall \alpha \in \mathcal{F}^\ast[\overline{\underline 0}p\sqsubset\alpha\rightarrow \forall m>n[\alpha(m)\neq 0\rightarrow\alpha \in \mathcal{X}]]$.\\We may assume $n+1>p$. Note that, for each $\beta$ in $\mathcal{F}$, $\overline{\underline 0}n\ast\langle 1\rangle \ast\beta \in \mathcal{F}^\ast$ and: $\overline{\underline 0}n\ast\langle 1\rangle\ast \beta \in \mathcal{X}$, and by (ii), $\beta \in \mathcal{X}$. \\We thus see: $\mathcal{F}\subseteq \mathcal{X}$ and we know that this leads to a contradiction.\\Conclude: not: $\mathcal{F}\subseteq \mathcal{X}^+$.
  
  \smallskip 3. Let $\mathcal{X}_0, \mathcal{X}_1,\ldots$ be an infinite sequence of elements of $\mathcal{E}$ such that, for each $n$, $(\mathcal{X}_n)^+\subseteq \mathcal{X}_{n+1}$, and let $\mathcal{F}_0, \mathcal{F}_1, \ldots$ be an infinite sequence of spreads such that, for each $n$, $\mathcal{F}_n\subseteq (\mathcal{X}_n)^+\subseteq \mathcal{X}_{n+1}$  and not $\mathcal{F}_n\subseteq \mathcal{X}_n$. \\Define $\mathcal{F}^\ast:=\{\alpha\mid \forall n\forall p\forall \beta[\alpha=\underline{\overline 0}n\ast\langle p+1\rangle\ast \beta\rightarrow \beta \in \mathcal{F}_n\}$. \\ Note: $\mathcal{F}^\ast$ is a spread and $\underline 0\in \mathcal{F}^\ast$. \\Note that, for every $\alpha $ in $\mathcal{F}^\ast$, for every $n$, if $\overline{\underline 0}n\sqsubset\alpha$ en $\alpha(n)\neq 0$, then there exists $\beta$ in $\mathcal{F}_n\subseteq \mathcal{X}_{n+1}$ such that $\alpha=\overline \alpha(n+1)\ast\beta$, and by (ii), also $\alpha \in \mathcal{X}_{n+1}\subseteq \bigcup_i\mathcal{X}_i$.\\We thus see: $\mathcal{F}^\ast\subseteq (\bigcup_i\mathcal{X}_i)^+$. 
  
  Now assume: $\mathcal{F}^\ast\subseteq \bigcup_i\mathcal{X}_i$. Using Theorem \ref{T:extbcp}, find $p,i$ such that\\ $\forall \alpha \in \mathcal{F}^\ast[\overline{\underline 0}p\sqsubset \alpha\rightarrow \alpha \in \mathcal{X}_i]$. Define $n:=\max(p,i)$ and note: for all $\beta$ in $\mathcal{F}_n$, $\overline{\underline 0}n\ast\langle 1 \rangle\ast \beta \in \mathcal{F}^\ast$, so  $\overline{\underline 0}n\ast\langle 1 \rangle\ast \beta \in \mathcal{X}_i$, and, by (ii), also $\beta \in \mathcal{X}_i$.\\ We thus see: $\mathcal{F}_n \subseteq \mathcal{X}_i\subseteq \mathcal{X}_n$. Contradiction.
    \end{proof}
    
    Theorem \ref{T:finiteness} shows the expressive power of the language of intuitionistic mathematics. 
    In  Theorem \ref{T:finiteness}(i) the set $\mathbf{Fin}^{\neg\neg}$ might be replaced by the set \begin{center} $\mathbf{AlmostFin}:=\{\alpha\mid\forall\zeta\in [\omega]^\omega\exists n[\alpha\circ\zeta(n)=0]\}$, \end{center}
 as one may prove: $\bigcup \mathcal{E}\subseteq \mathbf{AlmostFin}$. \\Using Theorem \ref{T:brthesis}, one may prove: $\mathbf{AlmostFin}\subseteq \bigcup\mathcal{E}\subseteq \mathbf{Fin}^{\neg\neg}$.
 
 The set $\mathbf{AlmostFin}$ is an example of a `simple' 
 $\mathbf{\Pi}^1_1$  or \textit{co-analytic} set that fails to be positively Borel, see \cite[\S 3]{veldman2005}.  
 
 \section{Avoiding Brouwer's `axioms'.}  
    \subsection{Bishop-style constructivism} E. Bishop (1928-1983) started his own school of constructive mathematics in the 1960s, see \cite{bishop} and \cite{bishopbridges}. Although Bishop admired Brouwer for his criticism of the non-constructive nature of much of mathematics,  and for his heroic attempt to do something about it, he  also had strong hesitations about Brouwer's work. He did not accept Brouwer's adoption of the Continuity Principle and the Fan Theorem. He thought the arguments in favour of these principles bizarre, metaphysical and  mystical and judged that Brouwer, by bringing them in, had spoiled his own good cause.  He declared Brouwer's intuitionism to be dead and wanted a new beginning.
    
    Nevertheless, developing his constructive mathematics, he felt the need for some of the consequences of Brouwer's principles. 
    
    Bishop's course was the following.  He declared the notion of `{\it pointwise continuity}' of real functions to be `{\it irrelevant}'.  There is no need then for arguments as given in Section \ref{S:bcp}. He \textit{defined} a function from $\mathcal{R}$ to $\mathcal{R}$ to be continuous if and only if it is uniformly continuous on every closed and bounded subinterval of $\mathcal{R}$.   He thus buys for nothing what was for Brouwer the main consequence of the Fan Theorem, Theorem \ref{T:uc}. There is no need then for arguments `proving' the Fan Theorem as given in Section \ref{S:fantheorem}.\footnote{Bishop's definition is questionable. F. Waaldijk discovered that the statement that the composition $f\circ g$ of Bishop-continuous functions $f,g$ always is Bishop-continuous itself is equivalent to  the Fan Theorem, see \cite[Corollary 9.10]{veldman2011b}.} 
    
    Bishop's attitude might perhaps be called `{\it pragmatic}' in view of Brouwer's more deeply going analysis of mathematical thinking.  The principles proposed by Brouwer, even if one does not want to subscribe to the way he defends them, deserve to be discussed as possible starting points for our common mathematical discourse. Not doing so, Bishop denied himself the possibility of a further going perspective. 
    
    Bishop calls his own attitude {\it realistic}, contrasting it with the  attitude of the `classical' mathematician that uses non-constructive arguments. Unlike Brouwer,  he does not fiercely attack the classical mathematician but calls him `{\it idealistic}'. In view of Subsection \ref{SS:notoplato}, where the platonists were dubbed `realists', this terminology is funny. 
    
    \subsection{Martin-L\"of's `constructive mathematics'} P. Martin-L\"of (1942-$\;$) made a serious study of Brouwer's work and came to the following view in  \cite{martinloef}. Every object in constructive mathematics should be given by a `\textit{a finite configuration of signs}'.  Therefore, an infinite sequence $\alpha$ of natural numbers always should be given by a (finitely given) algorithm.  Creating $\alpha$ by choosing its values $\alpha(0), \alpha(1), \ldots$ one-by-one in an infinite process of free choices, as Brouwer wanted to do, is out of the question.   As we saw in Subsection \ref{SS:failurefan}, the Fan Theorem fails in such a context. Nevertheless, Martin-L\"of formulates and upholds a `Fan Theorem'. His strategy is similar to Bishop's as sketched in the previous Subsection. He \textit{redefines} the meaning of the statement `$\forall \alpha \in \mathcal{C}\exists n[\overline \alpha n \in B]$'.  Presenting his definition somewhat freely, one might say: he proposes to {\it define} `$\forall \alpha \in \mathcal{C}\exists s \in B[s\sqsubset \alpha]$' as: `there exists a canonical proof of `$\langle\;\rangle$ is $B$-secure' as we explained this in Section \ref{S:fantheorem}. Martin-L\"of extends this strategy to the Bar Theorem, see the proof of Theorem \ref{T:bimbid}.
    
    \subsection{An application} The Fan Theorem and Brouwer's Continuity Principle are used for a reconstruction of Cantor's Uniqueness Theorem in \cite{veldman18}.

\end{document}